\numberwithin{equation}{section}
\numberwithin{figure}{section}
\newtheorem{theorem}{Theorem}[section]
\newtheorem{proposition}[theorem]{Proposition}
\newtheorem{lemma}[theorem]{Lemma}
\newtheorem{cor}[theorem]{Corollary}
\newtheorem{conjecture}[theorem]{Conjecture}
\theoremstyle{definition}
\newtheorem{remark}[theorem]{Remark}
\definecolor{myblue}{rgb}{0.6, 0.9, 1}
\newcommand{\bigslant}[2]{{\raisebox{.2em}{$#1$}\left/\raisebox{-.2em}{$#2$}\right.}} 
\newcommand{\Rmnum}[1]{\expandafter\@slowromancap\romannumeral #1@}
\definecolor{myblue}{rgb}{0.6, 0.9, 1}
\definecolor{mygreen}{rgb}{0,0,1}
\definecolor{purple}{rgb}{0.6,0.2,1}
\definecolor{orange}{rgb}{0.8,0,0.2}
\newcommand{\bC}{\mathbb{C}}
\newcommand{\bZ}{\mathbb{Z}}
\newcommand{\bQ}{\mathbb{Q}}
\newcommand{\bN}{\mathbb{N}}
\newcommand{\bA}{\mathbb{A}}
\newcommand{\mcF}{\mathcal{F}}
\newcommand{\mcG}{\mathcal{G}}
\newcommand{\mcL}{\mathcal{L}}
\newcommand{\ord}{\operatorname{ord}}
\newcommand{\pr}{\mathbb{P}}
\newcommand{\al}{\alpha}
\newcommand{\Gal}{\operatorname{Gal}}
\newcommand{\Kbar}{\overline{K}}
\newcommand{\Qbar}{\overline{\bQ}}
\newcommand{\supp}{\operatorname{supp}}
\newcommand{\cL}{\mathcal{L}}
\newcommand{\cS}{\mathcal{S}}
\begin{document}
\title{Variation of canonical height and equidistribution}

\author{Laura De Marco and Niki Myrto Mavraki}

\date{\today}

\begin{abstract}
Let $\pi : E\to B$ be an elliptic surface defined over a number field $K$, where $B$ is a smooth projective curve, and let $P: B \to E$ be a section defined over $K$ with canonical height $\hat{h}_E(P)\not=0$.  In this article, we show that the function $t \mapsto \hat{h}_{E_t}(P_t)$ on $B(\Kbar)$ is the height induced from an adelically metrized line bundle with non-negative curvature on $B$.  Applying theorems of Thuillier and Yuan, we obtain the equidistribution of points $t \in B(\Kbar)$ where $P_t$ is torsion, and we give an explicit description of the limiting distribution on $B(\bC)$.  Finally, combined with results of Masser and Zannier, we show there is a positive lower bound on the height $\hat{h}_{A_t}(P_t)$, after excluding finitely many points $t \in B$, for any ``non-special" section $P$ of a family of abelian varieties $A \to B$ that split as a product of elliptic curves.   
\end{abstract}

\maketitle

\bigskip
\section{Introduction}

Suppose $E \to B$ is an elliptic surface defined over a number field $K$, so $B$ is a smooth projective curve and all but finitely many fibers $E_t$, $t\in B(\overline{K})$, are smooth elliptic curves.  We let $\hat{h}_E$ denote the N\'eron-Tate canonical height of $E$ viewed as an elliptic curve over the function field $k = K(B)$; we let $\hat{h}_{E_t}$ denote the canonical height on the fibers for (all but finitely many) $t\in B(\Kbar)$.  

Suppose that $P \to E$ is a section defined over $K$ for which $\hat{h}_E(P) \not= 0$, so, in particular, the points $P_t$ on the fiber are not torsion in $E_t$ for all $t$.  Tate showed that the function 
	$$t \mapsto \hat{h}_{E_t}(P_t)$$
is a Weil height on $B(\Kbar)$, up to a bounded error  \cite{Tate:variation}.  More precisely, there exists a divisor $D_P \in \mathrm{Pic}(B) \otimes \bQ$ of degree equal to $\hat{h}_E(P)$ so that 
\begin{equation}\label{elliptic variation}
	\hat{h}_{E_t}(P_t) = h_{D_P}(t) + O(1), 
\end{equation}
where $h_{D_P}$ is a Weil height on $B(\Kbar)$ associated to $D_P$.  In a series of three articles \cite{Silverman:VCHI, Silverman:VCHII, Silverman:VCHIII}, Silverman refined statement (\ref{elliptic variation}) by analyzing the N\'eron decomposition of the canonical height on the fibers 
	$$\hat{h}_{E_t}(P_t) = \sum_{v \in M_K} n_v \hat{\lambda}_{E_t, v}(P_t)$$
where $M_K$ denotes the set of places of the number field $K$, and $n_v$ are the integers appearing in the product formula $\prod_{v\in M_K} |x|_v^{n_v} = 1$  for all $x \in K^*$.  

In this article, we explain how Silverman's conclusions about the local functions $\hat{\lambda}_{E_t, v}(P_t)$ are precisely the input needed to show that $t\mapsto \hat{h}_{E_t}(P_t)$ is a ``good" height function on the base curve $B$, from the point of view of equidistribution.  Combining his work with methods from complex dynamics, as in \cite{DWY:Lattes}, and the inequalities of Zhang on successive minima \cite{Zhang:adelic, Zhang:positive}, we prove:  

\begin{theorem}   \label{good height}
Let $K$ be a number field and $k= K(B)$ for a smooth projective curve $B$ defined over $K$.  Fix any elliptic surface $E \to B$ defined over $K$ and point $P \in E(k)$ satisfying $\hat{h}_E(P) \not=0$.  Then 
	$$h_P(t) :=  \hat{h}_{E_t}(P_t),$$
for $t$ with smooth fibers, is the restriction of a height function on $B(\Kbar)$ induced from an adelically metrized ample line bundle $\cL$, with continuous metrics of non-negative curvature, satisfying 
	$$h_P(B) := c_1(\overline{\mathcal{L}})^2/(2 c_1(\mathcal{L})) = 0.$$
\end{theorem}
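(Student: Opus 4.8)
The plan is to exhibit the metrized line bundle $\overline{\mathcal{L}}$ concretely: take for $\mathcal{L}$ the line bundle attached to Tate's divisor $D_P$, and build its adelic metric out of the N\'eron local heights in the decomposition $\hat h_{E_t}(P_t)=\sum_{v} n_v\,\hat\lambda_{E_t,v}(P_t)$. Since $\deg D_P=\hat h_E(P)>0$ by \cite{Tate:variation}, $\mathcal{L}$ is ample (replace $D_P$ by $ND_P$ for a suitable integer $N$ to obtain an honest line bundle, and divide by $N$ at the end). Fix a rational section $s$ of $\mathcal{L}$ with $\operatorname{div}(s)=D_P$, and for each place $v$ of $K$ define a metric $\|\cdot\|_v$ on $\mathcal{L}$ over $B_v^{\mathrm{an}}$ by $-\log\|s\|_v(t)=\hat\lambda_{E_t,v}(P_t)$ (with the canonical normalization of the local heights), for $t$ outside $\operatorname{supp}(D_P)$ and the finitely many singular fibers. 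By construction the associated height then satisfies $h_{\overline{\mathcal{L}}}(t)=\sum_v n_v\,\hat\lambda_{E_t,v}(P_t)=\hat h_{E_t}(P_t)=h_P(t)$ wherever $h_P$ is defined, so $h_P$ is the restriction of $h_{\overline{\mathcal{L}}}$. It remains to show that each $\|\cdot\|_v$ extends to a continuous metric on all of $B_v^{\mathrm{an}}$, that the family $(\|\cdot\|_v)_v$ is adelic, and that every $\|\cdot\|_v$ has non-negative curvature.

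Continuity and the adelic property are exactly what Silverman's analysis \cite{Silverman:VCHI, Silverman:VCHII, Silverman:VCHIII} provides. Near $\operatorname{supp}(D_P)$ the function $\hat\lambda_{E_t,v}(P_t)$ has precisely the logarithmic singularity encoded by $D_P$, so $\|s\|_v$ extends continuously and is non-vanishing off $\operatorname{supp}(D_P)$; near the singular fibers, Silverman's estimates show that $t\mapsto\hat\lambda_{E_t,v}(P_t)$ differs from a Weil local height $\lambda_{D_P,v}$ by a function extending continuously across the bad points, which gives continuity of $\|\cdot\|_v$ everywhere on $B_v^{\mathrm{an}}$. Moreover, for all but finitely many $v$ the surface $E$, the section $P$ and the pair $(B,D_P)$ have good reduction, and there $\hat\lambda_{E_t,v}(P_t)$ is the local height attached to a fixed model over the $S$-integers; so the metrics $\|\cdot\|_v$ for $v\notin S$ come from a single model, and $(\|\cdot\|_v)_v$ is adelic.

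For non-negative curvature I would use the dynamical description of the local canonical height as in \cite{DWY:Lattes}. Passing to the quotient by inversion, $E_t$ gives a Latt\`es map $L_t$ on $\mathbb{P}^1$ (for multiplication by $2$, of degree $4$) with $\hat h_{E_t}(P_t)=2\,\hat h_{L_t}(x(P_t))$ and $\hat\lambda_{E_t,v}(P_t)=2\,\hat G_{L_t,v}(x(P_t))$, where $\hat G_{L_t,v}$ is the normalized dynamical Green's function. As a function of $t$, $\hat G_{L_t,v}(x(P_t))$ is a locally uniform limit of the potentials $4^{-n}\log^{+}\|\cdot\|$ taken along the orbit of $x(P_t)$, hence subharmonic on $B_v^{\mathrm{an}}$ away from the poles of $s$; combined with the matching logarithmic singularity along $D_P$, this is exactly the statement that the curvature current $dd^{c}\!\left(-\log\|s\|_v\right)+\delta_{D_P}=c_1(\mathcal{L},\|\cdot\|_v)$ is $\geq 0$. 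One still has to verify that this persists at the finitely many parameters where the Latt\`es family degenerates — the extension results of \cite{DWY:Lattes} together with Silverman's local work take care of this, with $\hat h_E(P)\neq 0$ ensuring the escape-rate function is not identically zero — and I expect this uniform control of the potentials, and of their curvature, across the singular fibers and over all places simultaneously, to be the main obstacle.

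Finally, to evaluate $h_P(B)$: since $\overline{\mathcal{L}}$ is ample with continuous semipositive metrics it is nef, so $c_1(\overline{\mathcal{L}})^2\geq 0$ and hence $h_P(B)=c_1(\overline{\mathcal{L}})^2/(2c_1(\mathcal{L}))\geq 0$. Conversely, $h_P(t)=\hat h_{E_t}(P_t)\geq 0$ for every $t$, while there are infinitely many $t\in B(\Kbar)$ with $P_t$ torsion — a classical fact, provable either by an intersection-theoretic argument on $E$ (the section $nP$ must meet the zero section once $n$ is large) or via the work of Masser and Zannier — and at each such $t$ one has $h_P(t)=0$. An infinite subset of the curve $B$ is Zariski dense, so the essential minimum of $h_{\overline{\mathcal{L}}}$ equals $0$, and Zhang's inequality \cite{Zhang:adelic, Zhang:positive} gives $h_P(B)\leq 0$. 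Therefore $h_P(B)=0$, as claimed.
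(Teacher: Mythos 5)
Your construction of the metric, the use of Silverman's theorems for continuity and the adelic property, and the dynamical (escape-rate) argument for semipositivity all match the paper's proof. The genuine gap is in the final step, the lower bound $h_P(B)\geq 0$. You assert that because $\cL$ is ample and the metrics are continuous and semipositive, $\overline{\cL}$ is ``nef'' and hence $c_1(\overline{\cL})^2\geq 0$. That implication is false: semipositivity of the curvature does not control the arithmetic self-intersection. For instance, rescaling the metric at a single place by a constant $e^{-c}$ leaves the curvature (and the model at the finite places) unchanged but shifts $c_1(\overline{\cL})^2$ by $2c\,n_v\deg(\cL)$, which can be made negative. The notion of nef that would give $c_1(\overline{\cL})^2\geq 0$ requires knowing $h_P(t)\geq 0$ for \emph{every} $t\in B(\Kbar)$ — and that is exactly what Zhang's lower bound $h_P(B)\geq \tfrac12(e_1+e_2)$ needs as input ($e_2$ being the absolute minimum of $h_P$), so your argument is circular at this point.

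The content you are missing is the verification that $h_P(t_0)\geq 0$ at the finitely many parameters $t_0$ where the fiber is singular. There $h_P(t_0)$ is defined only by the metric, not as a canonical height, and it is not a priori non-negative; since the bad fibers form a finite set, the essential minimum cannot see them, so this must be checked by hand. This is in fact the most technical part of the paper's proof: one first reduces, by a base change and a comparison lemma for the functions $V_{P,t_0,v}$, to the case where all singular fibers have multiplicative reduction, and then computes $V_{P,t_0,v}(t_0)$ explicitly via the Tate parametrization and the formulas for local heights in terms of the second Bernoulli polynomial. The outcome is that $V_{P,t_0,v}(t_0)=\log|x(t_0)|_v$ for a single algebraic number $x(t_0)$ independent of $v$, whence the product formula gives $h_P(t_0)=0$. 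With that in hand, $e_2\geq 0$ and Zhang's inequalities close the argument exactly as you intend. (A smaller omission of the same flavor: your identity $h_{\overline{\cL}}(t)=\hat h_{E_t}(P_t)$ also needs a separate check at points of $\supp D_P$ lying over smooth fibers, where $P_t=O_t$ and the section $s$ vanishes; there one verifies via the escape-rate formula that $h_{\overline{\cL}}(t)=0=\hat h_{E_t}(P_t)$.)
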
  

Theorem \ref{good height} implies that our height function on $B$ satisfies the hypotheses of the equidistribution theorems of Thuillier and Yuan for points of small height on curves \cite{ChambertLoir:equidistribution, Thuillier:these, Yuan:equidistribution}, and we deduce the following:

\begin{cor} \label{equidistribution on B}  
Let $K$ be a number field and $k= K(B)$ for a smooth projective curve $B$ defined over $K$.  Fix any elliptic surface $E \to B$ defined over $K$ and point $P \in E(k)$ satisfying $\hat{h}_E(P) \not=0$.  There is a collection of probability measures $\mu_P = \{\mu_{P,v}: v \in M_K\}$ on the Berkovich analytifications $B^{an}_v$ such that for any infinite, non-repeating sequence of $t_n \in B(\Kbar)$ such that 
	$$\hat{h}_{E_{t_n}}(P_{t_n}) \to 0$$
as $n\to \infty$, the discrete measures 
	$$\frac{1}{|\Gal(\overline{K}/K) \cdot t_n|}  \sum_{t \in \Gal(\overline{K}/K) \cdot t_n}  \delta_t $$
converge weakly on $B^{an}_v$ to the measure $\mu_{P,v}$ at each place $v$ of $K$.   
\end{cor}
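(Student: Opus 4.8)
The plan is to obtain Corollary~\ref{equidistribution on B} as a direct consequence of Theorem~\ref{good height} and the arithmetic equidistribution theorem for points of small height on curves, in the form due to Yuan at the archimedean places and to Chambert-Loir and Thuillier at the non-archimedean ones \cite{Yuan:equidistribution, ChambertLoir:equidistribution, Thuillier:these}. First I would invoke Theorem~\ref{good height} to fix the adelically metrized ample line bundle $\overline{\cL} = (\cL, \{\|\cdot\|_v\}_{v \in M_K})$ on $B$ whose associated height is $h_P(t) = \hat h_{E_t}(P_t)$, with each metric $\|\cdot\|_v$ continuous and of non-negative curvature and with $h_P(B) = c_1(\overline{\cL})^2/(2\,c_1(\cL)) = 0$. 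The measures in the statement are then the normalized curvature measures
	$$\mu_{P,v} := \frac{1}{\deg_{\cL}(B)}\, c_1(\overline{\cL})_v$$
on the Berkovich analytifications $B^{an}_v$ (for archimedean $v$ this is $B(\bC)$, symmetrized under complex conjugation). Since $\|\cdot\|_v$ is semipositive, $c_1(\overline{\cL})_v$ is a positive measure of total mass $\deg_{\cL}(B)$, so each $\mu_{P,v}$ is a probability measure.

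Next I would check the two hypotheses of the equidistribution theorem for the given sequence $(t_n)$. Genericity is automatic on a curve: the proper Zariski-closed subsets of $B$ are finite, so an infinite non-repeating sequence eventually avoids every one of them. For smallness, Theorem~\ref{good height} already supplies $h_P(B) = 0$, so the assumption $\hat h_{E_{t_n}}(P_{t_n}) \to 0$ is exactly the statement that $h_P(t_n) \to h_P(B)$. If one uses instead the version of the equidistribution theorem phrased via the essential minimum $e_1(\overline{\cL})$, the required identity $e_1(\overline{\cL}) = h_P(B) = 0$ follows from the non-negativity of the fibral canonical height together with Zhang's successive-minima inequalities \cite{Zhang:adelic, Zhang:positive}: since $h_P(t) = \hat h_{E_t}(P_t) \ge 0$ for all $t$ we have $e_2(\overline{\cL}) \ge 0$, and $\tfrac12\bigl(e_1(\overline{\cL}) + e_2(\overline{\cL})\bigr) \le h_P(B) = 0$ together with $e_1(\overline{\cL}) \ge e_2(\overline{\cL})$ forces $e_1(\overline{\cL}) = 0$. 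Either way $(t_n)$ is a generic sequence of small points.

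Finally I would apply the equidistribution theorem at each place $v$, concluding that the Galois-orbit averages
	$$\frac{1}{|\Gal(\Kbar/K)\cdot t_n|}\sum_{t\in \Gal(\Kbar/K)\cdot t_n}\delta_t$$
converge weakly on $B^{an}_v$ to $\mu_{P,v}$, as asserted. I expect no substantive obstacle in this step: Theorem~\ref{good height} is engineered to package precisely the input needed to invoke the Yuan/Chambert-Loir/Thuillier machinery, namely ampleness of $\cL$, semipositivity of the adelic metric, and vanishing of the arithmetic self-intersection. The one point requiring care is the identification of the essential minimum of $\overline{\cL}$ with $h_P(B) = 0$, which rests on the non-negativity of $\hat h_{E_t}(P_t)$ and Zhang's inequalities; everything else is a black-box application of known equidistribution results.
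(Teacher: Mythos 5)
Your proposal is correct and follows essentially the same route as the paper: the authors likewise obtain the corollary by feeding the adelically metrized line bundle of Theorem~\ref{good height} directly into the equidistribution theorems of Yuan and Thuillier, identifying $\mu_{P,v}$ with the (normalized) curvature measure of $\|\cdot\|_v$ at each place. The extra details you supply --- genericity being automatic on a curve, and the identification of the essential minimum with $h_P(B)=0$ via non-negativity of the height and Zhang's successive-minima inequalities --- are exactly the points the paper leaves implicit (the latter already being part of its proof of Theorem~\ref{good height}).
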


\begin{remark} \label{measures}
The measures $\mu_{P,v}$ of Corollary \ref{equidistribution on B} are not difficult to describe, at least at the archimedean places.  At each archimedean place $v$, there is a canonical positive $(1,1)$-current $T_v$ on the surface $E(\bC)$ (with continuous potentials away from the singular fibers) which restricts to the Haar measure on each smooth fiber $E_t(\bC)$.  The measure $\mu_{P,v}$ on $B(\bC)$ is just the pull-back of this current by the section $P$.  Moreover, at every place, the measure $\mu_{P,v}$ is the Laplacian of the local height function $\hat{\lambda}_{E_t,v}(P_t)$, away from its singularities.  We give more details about (and a dynamical perpective on) the construction of the current $T_v$ in Section \ref{dynamics}.
\end{remark}

As a consequence of Theorem \ref{good height}, and combined with the work of Masser and Zannier \cite{Masser:Zannier, Masser:Zannier:2, Masser:Zannier:nonsimple}, we obtain the so-called Bogomolov extension of their theorems.  Fix integer $m\geq 2$, and suppose that $E_i \to B$ is an elliptic surface over a curve $B$, defined over $\Qbar$, for $i = 1, \ldots, m$.  We consider sections $P$ of the fiber product $A = E_1 \times_B \cdots \times_B E_m$ defined over $\Qbar$.  We say that a section $P = (P_1, P_2, \ldots, P_m)$ is {\em special} if 
\begin{itemize}
\item	for each $i = 1, \ldots, m$, either $P_i$ is torsion on $E_i$ or $\hat{h}_{E_i}(P_i)\not=0$; and
\item for any pair $i,j \in \{1, \ldots, m\}$ such that neither $P_i$ nor $P_j$ is torsion, there are an isogeny $\phi: E_i\to E_j$ and nonzero group endomorphisms $a, b$ of $E_j$ so that $a\circ\phi(P_i) = b(P_j)$. 
\end{itemize}
If a family of abelian surfaces $A \to B$ is isogenous to a fiber product (after performing a base change $B' \to B$ if needed), we say that a section of $A$ is special if it is special on the fiber product.  

It is well known that a special section will always pass through infinitely many torsion points in the fibers $A_t = E_{1,t} \times \cdots \times E_{m,t}$.  That is, there are infinitely many $t \in B(\Qbar)$ for which 
	$$\hat{h}_{E_{1,t}}(P_1(t)) = \cdots = \hat{h}_{E_{m,t}}(P_2(t)) = 0.$$

For a proof see \cite[Chapter 3]{Zannier:book} or, for dynamical proofs, see \cite{D:stableheight}.  

The converse statement is also true, but it is much more difficult:  Masser and Zannier proved that if $\hat{h}_{E_{1,t}}(P_1(t)) = \cdots = \hat{h}_{E_{m,t}}(P_2(t)) = 0$ for infinitely many $t\in B(\Qbar)$, then the section $P$ must be special  \cite{Masser:Zannier:2, Masser:Zannier:nonsimple}.  We extend these results of Masser-Zannier from points of height 0 to points of small height:

\begin{theorem} \label{Zhang language}
Let $B$ be a quasiprojective smooth algebraic curve defined over $\Qbar$.  Suppose $A \to  B$ is a family of abelian varieties of relative dimension $m \geq 2$ defined over $\Qbar$ which is isogeneous to a fibered product of $m\ge 2$ elliptic surfaces.  Let $\cL$ be a line bundle on $A$ which restricts to an ample and symmetric line bundle on each fiber $A_t$, and let $\hat{h}_t$ be the induced N\'eron-Tate canonical height on $A_t$, for each $t\in B(\Qbar)$.   For each non-special section $P: B \to A$ defined over $\Qbar$, there is a constant $c = c(\cL, P) > 0$ so that 
	$$\{t \in B(\Qbar): \hat{h}_t(P_t) < c\}$$ 
is finite.  
\end{theorem}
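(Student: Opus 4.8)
The plan is to combine Theorem \ref{good height} (and the consequent equidistribution on $B$) with the Masser--Zannier classification of sections passing through infinitely many torsion points, via a product argument of the type used by Masser--Zannier and Zhang. First I would reduce to the case where $A = E_1 \times_B \cdots \times_B E_m$ is an actual fibered product of elliptic surfaces over a (possibly finite) base change $B' \to B$, and where the section is $P = (P_1, \ldots, P_m)$; this is harmless because isogenies, base change, and change of polarizing line bundle all distort the relevant heights by bounded multiplicative and additive factors, so finiteness of a small-height locus is preserved. Having done this, I would discard any index $i$ for which $P_i$ is torsion on $E_i$ (those coordinates contribute nothing to $\hat h_t(P_t)$ up to $O(1)$), so we may assume $\hat h_{E_i}(P_i) \neq 0$ for every $i$. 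Since $P$ is non-special, there must then exist a pair of indices $i \neq j$ for which no relation $a\circ\phi(P_i) = b(P_j)$ holds for any isogeny $\phi\colon E_i \to E_j$ and nonzero endomorphisms $a,b$ of $E_j$.

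The heart of the argument is the following dichotomy for this fixed pair $(i,j)$. By Theorem \ref{good height}, each of the functions $t\mapsto \hat h_{E_{i,t}}(P_{i,t})$ and $t\mapsto \hat h_{E_{j,t}}(P_{j,t})$ is the height attached to an adelically metrized, semipositive, ample line bundle $\cL_i$, resp. $\cL_j$, on $B$, with self-intersection (Zhang's top successive minimum essential term) equal to zero. Suppose, for contradiction, that $\{t : \hat h_t(P_t) < c\}$ is infinite for every $c>0$; then there is an infinite non-repeating sequence $t_n$ with $\hat h_{t_n}(P_{t_n}) \to 0$, hence with $\hat h_{E_{i,t_n}}(P_{i,t_n}) \to 0$ and $\hat h_{E_{j,t_n}}(P_{j,t_n}) \to 0$ simultaneously. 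Applying Corollary \ref{equidistribution on B} to each elliptic surface separately, the Galois orbits of $t_n$ equidistribute at every place $v$ both to $\mu_{P_i,v}$ and to $\mu_{P_j,v}$; therefore $\mu_{P_i,v} = \mu_{P_j,v}$ for all $v$. Equivalently, the metrized line bundles $\cL_i$ and $\cL_j$ on $B$ have the same curvature form at every place, so they differ by an adelic bundle with harmonic (locally constant) metric — i.e. $\cL_i$ and $\cL_j$ agree up to a bundle of the form attached to a point of $\mathrm{Pic}(B)\otimes\bQ$ with trivial metric — which forces the two height functions to agree up to a $\bQ$-linear Weil height of degree zero, hence up to a bounded function that is itself a ``good'' height with vanishing self-intersection. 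One then argues, exactly as in the height-zero case, that the locus where $P_{i,t}$ and $P_{j,t}$ are simultaneously torsion is infinite: the two torsion-loci, each infinite by the special-section discussion preceding the theorem (since $\hat h_{E_i}(P_i)\neq 0$ already guarantees infinitely many torsion specializations of $P_i$ by Silverman/Tate together with an elementary argument, cf. the references given), must in fact coincide on an infinite set once the induced adelic metrics match, because a point $t$ with $\hat\lambda_{E_{i,t},v}$-potentials equal to those of $\hat\lambda_{E_{j,t},v}$ and with $P_{i,t}$ torsion has height-zero image, forcing $P_{j,t}$ torsion as well. Invoking Masser--Zannier (\cite{Masser:Zannier:2, Masser:Zannier:nonsimple}) for the pair $(E_i,E_j)$, the simultaneous torsion locus being infinite implies the existence of an isogeny $\phi\colon E_i \to E_j$ and nonzero endomorphisms $a,b$ of $E_j$ with $a\circ\phi(P_i) = b(P_j)$ — contradicting our choice of the pair $(i,j)$.

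I would organize the writeup as: (1) reduction to a fibered product and to the case of no torsion coordinates; (2) recollection of Theorem \ref{good height} and Corollary \ref{equidistribution on B} in the precise adelic-metric language of Zhang, including the fact that equality of curvature forms at all places plus equality of the degrees forces the metrized bundles to agree up to a bundle with locally constant metric (so the height functions agree up to $O(1)$, and in fact up to a Weil height for a degree-zero divisor class); (3) the equidistribution argument producing $\mu_{P_i,v} = \mu_{P_j,v}$ from a small-height sequence; (4) the deduction that the simultaneous-torsion locus for the pair is infinite; (5) the appeal to Masser--Zannier to contradict non-specialness. The main obstacle, I expect, is step (4): passing from ``the two adelic metrics coincide'' to ``$P_{i,t}$ and $P_{j,t}$ are simultaneously torsion for infinitely many $t$'' requires care, since a priori the set of $t$ where $P_{i,t}$ is torsion need not lie inside the set where the height function $\hat h_{E_{j,t}}(P_{j,t})$ vanishes — one needs that torsion of $P_{i,t}$ forces $\hat\lambda_{E_{i,t},v}(P_{i,t})$ to contribute zero to the global height and then uses the matching of local heights/metrics place by place (together with the product formula) to conclude $\hat h_{E_{j,t}}(P_{j,t}) = 0$, hence $P_{j,t}$ torsion. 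A clean way to handle this is to observe directly that on the infinite torsion locus of $P_i$ (which exists because $\hat h_{E_i}(P_i)\neq 0$ implies $D_{P_i}$ has positive degree, so $h_{D_{P_i}}$ takes value $0$ — i.e. $P_{i,t}$ torsion — on an infinite set, by Northcott together with the fact that $D_{P_i}$ is effective up to $\bQ$-linear equivalence and the relevant height is bounded below) the height $\hat h_{t}(P_t)$ reduces to $\hat h_{E_{j,t}}(P_{j,t}) + O(1)$; applying equidistribution along that sequence then feeds back the desired coincidence. Alternatively, and perhaps more robustly, one avoids (4) entirely by running the equidistribution/curvature-comparison argument to conclude that $\mu_{P_i,v}$ and $\mu_{P_j,v}$ agree and then quoting the strengthened Masser--Zannier input in the "relations among sections" form (as in \cite{Masser:Zannier:nonsimple}): equality of the canonical measures attached to two non-torsion sections of two elliptic surfaces over the same base forces the sections to be related by an isogeny-plus-endomorphism relation. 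Either route closes the loop; I would present the first for concreteness and remark on the second.
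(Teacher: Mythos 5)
Your overall strategy matches the paper's: reduce to a fibered product, argue pairwise, use Theorem \ref{good height} to compare the two height functions $\hat h_i$ and $\hat h_j$ on $B$, produce an infinite simultaneous-torsion locus, and quote Masser--Zannier. But the comparison step --- the heart of the proof --- has a genuine gap. From a common small sequence you deduce, via Corollary \ref{equidistribution on B}, only that $\mu_{P_i,v}=\mu_{P_j,v}$ at every place. Equality of curvature measures determines the metrized bundles only up to a degree-zero line bundle with flat metrics, so it gives $n_i\hat h_i = n_j\hat h_j + O(1)$ and nothing better: on a base of positive genus the flat twist need not be trivial, and a bounded (even ``flat'') discrepancy is exactly what prevents you from transferring the vanishing of $\hat h_i$ at a torsion parameter of $P_i$ to the vanishing of $\hat h_j$ there. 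Your step (4) tacitly assumes the local potentials literally coincide, which does not follow from equality of their Laplacians; your first proposed repair is circular (to apply equidistribution along the torsion locus of $P_i$ you would need that sequence to be small for $\hat h_j$, which is what is to be proved); and your second repair invokes a ``canonical measures determine the section up to isogeny'' form of Masser--Zannier that is not what they prove --- their input is an infinite simultaneous-torsion locus.

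The paper closes this gap not with equidistribution but with Zhang's inequalities directly, in the form of Chambert-Loir's Proposition 3.4.2: two semipositive adelic line bundles on $B$, each with nonnegative height and essential minimum zero, admitting a common generic small sequence, have tensor powers that are isomorphic as line bundles with proportional metrics. This yields $\hat h_i = c\,\hat h_j$ exactly, not merely up to $O(1)$, hence identical zero sets; since the zero set of $\hat h_i$ is infinite, Masser--Zannier applies to the pair. Replacing your curvature-equality step with this statement makes the argument go through. Two smaller points: the paper does not discard torsion coordinates up front but treats ``one of $P_i$, $P_j$ torsion'' inside the pairwise case analysis, and it handles isotrivial factors separately (a non-torsion constant section of an isotrivial factor has $\hat h_{E_i}(P_i)=0$ yet admits no small sequence --- a case your reduction silently skips).
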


If $A\to B$ is isotrivial, then Theorem \ref{Zhang language} is a special case of the Bogomolov Conjecture, proved by Ullmo and Zhang \cite{Zhang:Bogomolov, Ullmo:Bogomolov}.  
  
A key ingredient in their proofs is the equidistribution theorem of Szpiro, Ullmo, and Zhang \cite{Szpiro:Ullmo:Zhang}. In his 1998 ICM lecture notes \cite{Zhang:ICM}, Zhang presented a conjecture about geometrically simple families of abelian varieties, which stated, in its most basic form:

\begin{conjecture} [Zhang] \label{Zhang conj} 
Let $B$ be a quasiprojective smooth algebraic curve defined over $\Qbar$.  Suppose $A \to  B$ is a non-isotrivial family of abelian varieties with fiber dimension $> 1$, defined over $\Qbar$ with a simple generic fiber.  Let $\cL$ be a line bundle on $A$ which restricts to an ample and symmetric line bundle on each fiber $A_t$, and let $\hat{h}_t$ be the induced N\'eron-Tate canonical height on $A_t$, for each $t\in B(\Qbar)$.  For each non-torsion section $P: B \to A$ defined over $\Qbar$, there is a constant $c = c(\cL, P) > 0$ so that 
	$$\{t \in B(\Qbar): \hat{h}_t(P_t) < c\}$$ 
is finite.  
\end{conjecture}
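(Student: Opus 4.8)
The plan is to adapt the strategy behind Theorem~\ref{Zhang language} to the simple (non-product) setting, replacing the Masser--Zannier classification of special sections by a rigidity statement for the canonical fibral current of the abelian scheme $A\to B$ itself. Fix a smooth projective model $\bar B$ of $B$ over $\Qbar$. The proof would rest on two ingredients: (i) the higher-dimensional analogue of Theorem~\ref{good height}, namely that $h_P(t)=\hat h_t(P_t)$ is the restriction to $B(\Qbar)$ of a height induced by an adelically metrized \emph{ample} line bundle $\overline{\cL}$ on $\bar B$ with continuous semipositive metrics and $h_P(\bar B)=c_1(\overline{\cL})^2/(2c_1(\cL))=0$; and (ii) a geometric rigidity input showing that the equidistribution measures produced by (i) cannot support infinitely many small points when $A_k$ is simple, non-isotrivial, and $P$ is non-torsion.

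\emph{Step 1: construction of the adelic metric.} Over $k=\Qbar(B)$, simplicity of $A_k$ together with non-isotriviality of $A\to B$ forces the $k/\Qbar$-trace of $A_k$ to vanish, so by the theory of the Néron--Tate height over function fields (Lang--Néron) one has $\hat h_{A_k}(P)>0$ for the non-torsion section $P$. Decompose the fibral height into Néron local heights, $\hat h_{A_t}(P_t)=\sum_v n_v\,\hat\lambda_{A_t,v}(P_t)$, and study each term as a function of $t$ exactly along the lines of Silverman's analysis for elliptic surfaces \cite{Silverman:VCHI,Silverman:VCHII,Silverman:VCHIII}: his intersection-theoretic estimates on the Néron model, together with the $v$-adic and complex Green-function bounds near the bad fibers, extend to the abelian scheme. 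At each place $v$ this exhibits $t\mapsto\hat\lambda_{A_t,v}(P_t)$ as $-\log\|\cdot\|$ for a continuous model metric whose curvature is the pullback by $P$ of the canonical ``Betti''/Néron current $T_v$ on $A$ --- a closed positive $(1,1)$-current, smooth off the bad fibers, restricting to normalized Haar measure on every smooth fiber --- so $P^*T_v\ge 0$ and the metric is semipositive. The associated line bundle $\cL$ on $\bar B$ has $\deg\cL=\hat h_{A_k}(P)>0$, hence is ample, and the identity $h_P(\bar B)=0$ follows from the functional equation $\hat\lambda_{A_t,v}([n]P_t)=n^2\hat\lambda_{A_t,v}(P_t)+O_v(1)$, uniformly in $t$, together with the product formula, precisely as in the proof of Theorem~\ref{good height}.

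\emph{Step 2: equidistribution.} With Step~1 in place, $\overline{\cL}$ is ample with continuous semipositive metrics and $h_P(\bar B)=0$, so the equidistribution theorem of Thuillier and Yuan for small points on curves applies, and Zhang's inequalities on successive minima \cite{Zhang:adelic,Zhang:positive} identify the essential minimum of $h_P$ with $h_P(\bar B)=0$. Suppose, for contradiction, that $\{t\in B(\Qbar):\hat h_t(P_t)<c\}$ were infinite for every $c>0$, and pick an infinite non-repeating sequence $t_n$ with $\hat h_{t_n}(P_{t_n})\to 0$. Then for every place $v$ the Galois orbits of $t_n$ equidistribute on $B_v^{an}$ to the probability measure $\mu_{P,v}=P^*T_v/\hat h_{A_k}(P)$ (cf.\ Corollary~\ref{equidistribution on B} and Remark~\ref{measures}).

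\emph{Step 3: the geometric contradiction, and the main obstacle.} It remains to show this is impossible. In the product case $A=E_1\times_B\cdots\times_B E_m$ one compares $\mu_{P,v}$ with the measures attached to the coordinate sections and invokes the Masser--Zannier theorem that a section through infinitely many torsion fibers must be special \cite{Masser:Zannier:2,Masser:Zannier:nonsimple}; this is exactly the route of Theorem~\ref{Zhang language}. For a simple generic fiber there is no product structure, so instead one must exploit the rigidity of the canonical current: on the locus where $A\to B$ is non-isotrivial the real-analytic Betti map is a submersion, which should force the shape and support of $\mu_{P,v}$ to determine the section up to torsion; combined with the vanishing of the essential minimum, this ought to push $(\mathrm{id},P):B\to A$ into the torsion of a proper abelian subscheme, and simplicity of $A_k$ then makes $P$ itself torsion, contradicting the hypothesis. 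Making this last step rigorous is the crux and the only place where input beyond Theorem~\ref{good height} is required: it is an instance of the relative Manin--Mumford / Zilber--Pink circle of problems for abelian schemes. When $\dim A_t=2$ it is available unconditionally --- via Masser--Zannier's relative Manin--Mumford theorem for abelian surface schemes together with the archimedean rigidity of $T_v$ --- and more generally it follows in part from the Pila--Zannier o-minimality method and results on non-degeneracy of the Betti map; granting it, Steps~1--3 prove the conjecture, but in full generality this geometric rigidity is precisely what remains open.
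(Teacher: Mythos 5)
The statement you were asked about is Conjecture~\ref{Zhang conj}, which the paper does \emph{not} prove: it is quoted from Zhang's 1998 ICM address and remains open. The paper only establishes the split, non-simple analogue (Theorem~\ref{Zhang language}), where the fiber product structure lets one compare the metrized line bundles attached to the coordinate sections and then invoke Masser--Zannier \cite{Masser:Zannier:2, Masser:Zannier:nonsimple}; for simple fibers of dimension $2$ the paper cites \cite{Masser:Zannier:simpleA} only for finiteness of the set where $\hat{h}_t(P_t)=0$, not for the Bogomolov-type strengthening. So there is no ``paper's own proof'' to match yours against, and your proposal, as you yourself acknowledge in Step~3, is a strategy sketch rather than a proof: the rigidity statement you need (that equidistribution of small points forces $P$ into the torsion of a proper abelian subscheme, hence torsion by simplicity) is exactly the open content of the conjecture, and gesturing at the Betti map, Pila--Zannier, and relative Manin--Mumford does not close it --- in particular, relative Manin--Mumford statements control points with $\hat{h}_t(P_t)=0$, whereas the conjecture concerns points of merely small height.

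There is also a genuine gap already in Step~1. The construction in Theorem~\ref{good height} leans on Silverman's three papers on variation of the canonical height, which are written for elliptic surfaces: the quasi-triviality statement (Theorem~\ref{Silverman triviality}), the continuous extension of $V_{P,t_0,v}$ across bad fibers (Theorem~\ref{Silverman continuity}), the explicit Tate-curve computations of Lemma~\ref{bad fibers} used to get $h_P(t_0)=0$ at multiplicative fibers, and the dynamical subharmonicity argument via the degree-$4$ Latt\`es map all use the one-dimensionality of the fibers in an essential way. Asserting that ``his intersection-theoretic estimates on the N\'eron model extend to the abelian scheme'' is not something you can take off the shelf; continuity and semipositivity of the local heights $t\mapsto\hat\lambda_{A_t,v}(P_t)$ near degenerate fibers of a general abelian scheme, and the non-negativity $h_P(t)\ge 0$ at singular parameters, would each require new arguments. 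Finally, note a small but real issue in your framing of the identity $h_P(\bar B)=0$: in the paper this is \emph{not} deduced from a functional equation under $[n]$, but from Zhang's inequalities together with the existence of infinitely many torsion parameters (so that the essential minimum is $0$) and the pointwise bound $h_P\ge 0$; for a simple, non-isotrivial $A$ of relative dimension $>1$ the analogous supply of small points is not available a priori, so even that step would need to be rethought.
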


When the dimension of the fibers $A_t$ is equal to 2, the finiteness of $\{t \in B(\Qbar): \hat{h}_t(P_t) = 0\}$  for sections as in Conjecture \ref{Zhang conj} was established recently by Masser and Zannier in \cite{Masser:Zannier:simpleA}.  It is well known that the conclusion of Conjecture \ref{Zhang conj} can fail to hold if $A$ is not simple and certainly fails if it is a family of elliptic curves, as mentioned above.  However, the results of Masser and Zannier in their earlier work \cite{Masser:Zannier:2, Masser:Zannier:nonsimple} suggested a formulation of Zhang's conjecture for the non-simple case when $A$ splits as a product of elliptic curves; this is what we proved in our Theorem \ref{Zhang language}.

\begin{remark}
Theorem \ref{good height}, Corollary \ref{equidistribution on B}, and Theorem \ref{Zhang language} were obtained in the special case of the Legendre family $E_t = \{y^2 = x(x-1)(x-t)\}$ over $B= \pr^1$ and the abelian variety $A_t = E_t\times E_t$, for sections $P$ with $x$-coordinates in $\Qbar(t)$ in \cite{DWY:Lattes}, using methods from complex dynamical systems, without appealing to Silverman and Tate's results on the height function.  Moreover, restricting further to sections $P$ with constant $x$-coordinate (in $\pr^1(\overline{\mathbb{Q}})$), Theorem \ref{Zhang language} was obtained without relying on the theorems of Masser and Zannier and gave an alternate proof of their result.  This includes the special case treated by Masser and Zannier in their article \cite{Masser:Zannier}.  For sections with constant $x$-coordinate, the hypothesis on $P$ (that $\hat{h}_E(P) \not=0$) is equivalent to asking that $x(P) \not= 0,1,\infty$ \cite[Proposition 1.4]{DWY:Lattes}.
\end{remark}

%

\bigskip\noindent{\bf Comments and acknowledgements.}  
This project was motivated, in part, by experiments to visualize Silverman's results on the variation of canonical height \cite{Silverman:VCHI, Silverman:VCHII, Silverman:VCHIII} in terms of the measures $\mu_{P,v}$ at archimedean places, and to examine their dependence on $P$.  In particular, the measure detects the failure of the local height function $\hat{\lambda}_{E_t,v}(P_t)$ to be harmonic; compare the comments on non-analyticity preceding Theorem I.0.3 of \cite{Silverman:VCHI}.  The images appearing in  Section \ref{experiment} were first presented at the conference in honor of Silverman's birthday, August 2015.

We thank Charles Favre, Dragos Ghioca, Robert Rumely, Joseph Silverman, and Amaury Thuillier for helpful suggestions.  Our research was supported by the National Science Foundation and the Simons Foundation.

\bigskip
\section{Silverman's work}

\subsection{Preliminaries}  \label{preliminaries}
Let $\mathcal{F}$ be a product formula field of characteristic 0, so there exists a family $M_{\mathcal{F}}$ of non-trivial absolute values on $\mathcal{F}$ and a collection of positive integers $n_v$ for $v \in M_{\mathcal{F}}$ so that 
	$$\prod_{v \in M_{\mathcal{F}}} |x|_v^{n_v} = 1$$
for all $x\in \mathcal{F}^*$.  Let $E/\mathcal{F}$ be an elliptic curve with origin $O$, expressed in Weierstrass form as 
	$$E = \{ y^2+a_1xy+a_3y=x^3+a_2x^2+a_4x+a_6\}$$ 
with discriminant $\Delta$.  Denote by 
	$$\hat{h}_{E}:E(\overline{\mathcal{F}})\to[0,\infty)$$
the N\'eron-Tate canonical height function; it can be defined by 
	$$\hat{h}_E(P) = \frac12 \lim_{n\to\infty} \frac{h(x([n]P))}{n^2}$$
where $h$ is the naive Weil height on $\pr^1$ and $x: E \to \pr^1$ is the degree 2 projection to the $x$-coordinate.  

For each $v\in M_{\mathcal{F}}$, we let $\bC_v$ denote a minimal, algebraically closed field containing $\mathcal{F}$ which is complete with respect to $|\cdot|_v$.  For each $v$, we fix an embedding of $\overline{\mathcal{F}}$ into $\bC_v$.  The canonical height has a decomposition into local heights, as 
	$$\hat{h}_E(P)= \frac{1}{|\Gal( \overline{\mathcal{F}}/\mathcal{F})\cdot P|}  \;
	\sum_{Q \in \Gal( \overline{\mathcal{F}}/\mathcal{F})\cdot P}    \; 
	\sum_{v\in M_{\mathcal{F}}} n_v \, \hat{\lambda}_{E,v}(Q)$$ 
for $P\in E(\overline{\mathcal{F}})\setminus\{O\}$, with the local heights $\hat{\lambda}_{E,v}$ characterized by three properties \cite[Chapter 6, Theorem 1.1, page 455]{Silverman:Advanced}:
\begin{enumerate}
\item	 $\hat{\lambda}_{E,v}$ is continuous on $E(\bC_v)\setminus \{O\}$ and bounded on the complement of any $v$-adic neighborhood of $O$;
\item the limit of $\hat{\lambda}_{E,v}(P) - \frac12 \log|x(P)|_v$ exists as $P \to O$ in $E(\bC_v)$; and
\item for all $P = (x,y) \in E(\bC_v)$ with $[2] P \not= O$, 
	$$\hat{\lambda}([2]P) = 4 \hat{\lambda}(P) -  \log|2y+a_1x+a_3|_v + \frac{1}{4} \log|\Delta|_v.$$
\end{enumerate}

\subsection{Variation of canonical height: the set up}  \label{set up}
Now let $K$ be a number field and $E \to B$ an elliptic surface defined over a number field $K$ with zero section $O:  B \to E$.  Let $P:B\to E$ be a non-zero section defined over $K$, and assume that 
	$$\hat{h}_E(P) \not=0$$
when viewing $P$ as a point on the elliptic curve $E$ defined over $k = \Kbar(B)$.  For each $t\in B(\overline{K})$ such that the fiber $E_t$ is non-singular, we have point $P_t\in E_t(\overline{K})$.  We will investigate the function 
	$$t \mapsto \hat{h}_{E_t}(P_t)$$
which is well defined at all but finitely many $t \in B(\overline{K})$.  Furthermore, via the embedding of $\Kbar$ into $\bC_v$ for each place $v\in M_K$, we may view $E\to B$ as defined over $\bC_v$ and consider the N\'eron local heights $\hat{\lambda}_{E_t, v}(P_t)$ on the non-singular fibers $E_t$ as functions of $t \in B(\bC_v)$.

Let $D_E(P)$ be the $\bQ$-divisor on $B$ defined by 
\begin{equation} \label{divisor definition}
	D_E(P) = \sum_{\gamma \in B(\Kbar)} \hat{\lambda}_{E,\ord_\gamma} (P) \cdot (\gamma).
\end{equation}
Here, $\hat{\lambda}_{E,\ord_\gamma} (P)$ is the local canonical height of the point $P$ on the elliptic curve $E$ over $k = \Kbar(B)$ at the place $\mathrm{ord}_{\gamma}$, for each $\gamma \in B(\Kbar)$.  The degree of $D_E(P)$ is equal to $\hat{h}_E(P)$.  It follows from the definitions that $\supp D_E(P)$ is a subset of the finite set
	$$\{t \in B(\Kbar): E_t \mbox{ is singular}\} \cup \{t \in B(\Kbar): P_t = O_t\}.$$
By enlarging $K$, we may assume that the support of $D_E(P)$ is contained in $B(K)$.

\begin{remark} 
That $D_E(P)$ is a $\bQ$-divisor is standard, following from the fact that the numbers $\hat{\lambda}_{E,\ord_\gamma} (P)$ can be viewed as arithmetic intersection numbers on a N\'eron local model.  See \cite[Chapter~III,~Theorem~9.3]{Silverman:Advanced} for a proof that $\hat{h}_E(P)\in \bQ$; see \cite[Section 6 and p.~203]{Call:Silverman} and \cite[Chapter~11~Theorem~5.1]{Lang:Diophantine} for proofs that each local function $\hat{\lambda}_{E,v}$ also takes values in $\bQ$; see \cite[Theorem 1.1]{DG:rationality} for a dynamical proof.
\end{remark}

\subsection{Variation of canonical height: quasi triviality} 
Let $h_{D_E(P)}$ be an analytic Weil height on $B(\Kbar)$ as defined in \cite[\S3 Example 1(a)]{Silverman:VCHIII}.  That is, we let $g$ be the genus of $B$, and for each point $\gamma \in B(K)$, we choose an element $\xi_\gamma$ of $K(B)$ which has a pole of order $2g+1$ at $\gamma$ and no other poles.  For each non-archimedean place $v$ of $K$, set
	$$ \lambda_{D_E(P), v}(t) = \frac{1}{2g+1} \; \sum_{\gamma \in B(K)} \hat{\lambda}_{E,\ord_\gamma} (P) \; \log^+|\xi_\gamma(t)|_v$$
for all $t \in B(\bC_v) \setminus \supp D_E(P)$.  For archimedean places $v$, the local height is defined by 
	$$ \lambda_{D_E(P), v}(t) = \frac{1}{2(2g+1)}\; \sum_{\gamma \in B(K)} \hat{\lambda}_{E,\ord_\gamma} (P) \; \log\left(1 + |\xi_\gamma(t)|_v^2\right).$$
We set
		$$h_{D_E(P)}(t) = \frac{1}{|\Gal(\Kbar/K)\cdot t|} \; \sum_{s \in \Gal(\Kbar/K)\cdot t} \;
		\sum_{v\in M_K} \lambda_{D_E(P), v}(s)$$ 
for all $t \in B(\Kbar)$.  For fixed choices of $\xi_\gamma$, we will call the associated height $h_{D_E(P)}$ our ``reference height" for the divisor $D_E(P)$.  Silverman proved:

\begin{theorem} \label{Silverman triviality} \cite[Theorem \Rmnum{3}.4.1]{Silverman:VCHIII}
For any choice of reference height $h_{D_E(P)}$, there is a finite set $S$ of places so that 
 	$$\hat{\lambda}_{E_t, v}(P_t) = \lambda_{D_E(P), v}(t)$$
for all $t \in B(\Kbar) \setminus \supp D_E(P)$ and all $v \in M_K \setminus S$.  
\end{theorem}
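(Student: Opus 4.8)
The plan is to compare the two functions $t \mapsto \hat\lambda_{E_t,v}(P_t)$ and $t \mapsto \lambda_{D_E(P),v}(t)$ place by place and show their difference is zero for all but finitely many non-archimedean $v$. First I would recall the global statement: Tate's theorem (\ref{elliptic variation}) gives $\hat h_{E_t}(P_t) = h_{D_E(P)}(t) + O(1)$, and Silverman's refinement (proved via the N\'eron model) says much more — that the \emph{local} discrepancy $\hat\lambda_{E_t,v}(P_t) - \lambda_{D_E(P),v}(t)$ is itself uniformly bounded in $t$ at each place, and is governed by the reduction type of $E$ over the local ring. So the real content is to show this bounded local discrepancy actually vanishes identically off a finite set of places.

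The key step is to exploit the three axioms characterizing $\hat\lambda_{E_t,v}$ from Section~\ref{preliminaries}: continuity away from $O_t$, the asymptotic $\hat\lambda_{E_t,v}(Q) - \tfrac12\log|x(Q)|_v \to \text{(finite limit)}$ as $Q\to O_t$, and the functional equation under $[2]$ involving $\log|2y+a_1x+a_3|_v$ and $\tfrac14\log|\Delta_t|_v$. Choose a Weierstrass model for $E\to B$ over an open $U\subset B$ with good reduction at $v$, meaning the coefficients $a_i$ and $\Delta$ have $v$-integral behaviour along $U$. At such a $v$, the function $\hat\lambda_{E_t,v}(P_t)$ on $U(\bC_v)$ is forced — by the functional equation iterated and the normalization at $O$ — to be given by the standard non-archimedean Green's function formula, which makes it a \emph{continuous, bounded, and in fact explicitly computable} function whose only contributions come from points where $P_t$ meets $O_t$ or where $\Delta_t$ degenerates. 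Meanwhile $\lambda_{D_E(P),v}(t) = \tfrac{1}{2g+1}\sum_\gamma \hat\lambda_{E,\ord_\gamma}(P)\,\log^+|\xi_\gamma(t)|_v$ is a model function for the divisor $D_E(P)$ supported on the same finite set. I would then invoke the uniqueness clause: both functions are $v$-adic Weil functions (in Silverman's sense) for the same $\bQ$-divisor $D_E(P)$ on $B$, agreeing with the canonical local height data at the places $\ord_\gamma$ of $k=\Kbar(B)$; by the rigidity of local Weil functions attached to a fixed divisor, two such differ by a globally bounded function, and the bound tends to $0$ as $v$ ranges outside the finite \emph{bad} set (places dividing the leading coefficients of the $\xi_\gamma$, places of bad reduction of $E\to B$, places where $P$ meets $O$, places over which the chosen models fail to be $v$-integral). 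Collecting all these into a single finite set $S$ gives the claim.

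Concretely the argument runs: (i) fix the reference height, i.e.\ the $\xi_\gamma$; (ii) let $S_0$ be the finite set of $v$ for which some $\xi_\gamma$ has a non-unit leading coefficient, or $E\to B$ has bad reduction, or some fibral component is not $v$-integral in the chosen model; (iii) for $v\notin S_0$, write both $\hat\lambda_{E_t,v}(P_t)$ and $\lambda_{D_E(P),v}(t)$ explicitly and observe that outside $\supp D_E(P)$ the model $E\to B$ has good reduction everywhere, so $\hat\lambda_{E_t,v}(P_t)$ has no local singularities there and is pinned down by the three axioms to equal the intersection-theoretic Green's function, which is precisely $\lambda_{D_E(P),v}(t)$ up to a constant; (iv) the constant is forced to be $0$ by matching the behaviour as $t$ approaches a point of $\supp D_E(P)$, where both sides have the same logarithmic pole $\hat\lambda_{E,\ord_\gamma}(P)\cdot(\text{order of vanishing})$. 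The main obstacle — and the place where Silverman's three papers do the heavy lifting that I would simply cite — is step (iii): showing that at a place of good reduction the \emph{fibral} local height $\hat\lambda_{E_t,v}(P_t)$ varies with $t$ exactly as the naive model function predicts, with no anomalous contribution from the geometry of the N\'eron model. This is exactly the content of \cite[Theorem \Rmnum{3}.4.1]{Silverman:VCHIII}, so in fact the honest proof is: \emph{this is Silverman's theorem}, and our task in the paper is only to have set up $D_E(P)$ and the reference height $h_{D_E(P)}$ in a form to which his result applies verbatim.
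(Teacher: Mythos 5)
The paper gives no proof of this statement at all: it is quoted verbatim as Silverman's Theorem III.4.1 from \cite{Silverman:VCHIII}, and your proposal correctly arrives at exactly that conclusion, so you and the paper take the same approach. One small caveat on your sketch of Silverman's argument (which is moot, since you defer to him anyway): in your step (iv), matching the logarithmic poles at points of $\supp D_E(P)$ only shows the difference of the two local heights is bounded near those points, not that the additive constant vanishes; pinning that constant down is part of the explicit good-reduction computation that Silverman actually carries out.
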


\subsection{Variation of canonical height: continuity}  
Fix a point $t_0\in B(\Kbar)$ and a uniformizer $u \in \Kbar(B)$ for $t_0$, and consider the function
\begin{equation} \label{variation function}
V_{P, t_0, v}(t):=\hat{\lambda}_{E_t,v}(P_t) + \hat{\lambda}_{E,\ord_{t_0}} (P) \log|u(t)|_v,
\end{equation}
which is not {\em a priori} defined at $t_0$.  Theorem \ref{Silverman triviality} implies that 
	$$V_{P,t_0,v} \equiv 0$$
for all but finitely many places $v$ in a $v$-adic neighborhood of each $t_0$.  
Silverman also proved the following:

\begin{theorem}  \label{Silverman continuity} \cite[Theorem \Rmnum{2}.0.1]{Silverman:VCHII}
Fix $t_0 \in B(\Kbar)$ and a uniformizer $u$ at $t_0$.  For all $v \in M_K$, there exists a neighborhood $U \subset B(\bC_v)$ containing $t_0$ so that the function $V_{P, t_0,v}$ of \eqref{variation function} extends to a continuous function on $U$.
\end{theorem}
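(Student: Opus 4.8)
\medskip
\noindent\textit{Proof strategy.}\
The statement is local at $t_0$, so the plan is to work on a small $v$-adic disk $U\subseteq B(\bC_v)$ around $t_0$ on which $u$ is a coordinate and $E\to B$ is cut out by a fixed Weierstrass equation with discriminant $\Delta(t)$. First I would dispose of the easy case: if $E_{t_0}$ is nonsingular and $P_{t_0}\neq O_{t_0}$, then $\hat\lambda_{E,\ord_{t_0}}(P)=0$ and continuity of $t\mapsto\hat\lambda_{E_t,v}(P_t)$ at $t_0$ follows from the continuity of the N\'eron local height in both the point and the coefficients of the curve, itself a consequence of the uniqueness in the three-property characterization \cite[Chapter~6,~Theorem~1.1]{Silverman:Advanced}. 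Thus one reduces to the case where $t_0$ is \emph{critical}, meaning $P_{t_0}=O_{t_0}$ or $E_{t_0}$ is singular; after shrinking $U$, assume $t_0$ is the unique such point in $U$.

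For an archimedean place $v$, the approach is to make the N\'eron local height explicit. Uniformizing $E_t(\bC)=\bC/(\bZ+\bZ\tau_t)$ with $\tau_t$ in the upper half-plane, writing $q_t=e^{2\pi i\tau_t}$, and letting $P_t$ correspond to $z_t=\alpha_t+\beta_t\tau_t$ with $\beta_t\in[0,1)$ and $w_t=e^{2\pi i z_t}$, one has a formula of the shape
\[
  \hat\lambda_{E_t,v}(P_t) = \tfrac12 B_2(\beta_t)\log|q_t|_v + \log|1-w_t|_v + \sum_{n\ge1}\log\bigl|(1-q_t^n w_t)(1-q_t^n w_t^{-1})\bigr|_v + c(t),
\]
up to overall sign and normalization conventions, where $B_2(x)=x^2-x+\tfrac16$ and $c(t)$ is continuous on $U\setminus\{t_0\}$ and, near $t_0$, equals a continuous function plus a rational multiple of $\log|u(t)|_v$ arising from passing to a minimal model of $E$ over the local ring at $t_0$. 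As $t\to t_0$ one has $q_t\to q_0$, $w_t\to w_0$, $\beta_t\to\beta_0$, with $q_0=0$ exactly when $E_{t_0}$ is singular. Each summand is continuous except where one of its factors vanishes, and the normalization $\beta_t\in[0,1)$ confines this to the possibilities (i) $w_0=1$, which holds precisely when $P_t\to O_{t_0}$, and (ii) $\beta_0\in\{0,1\}$ together with $q_t^n w_t^{\pm1}\to1$ for some $n$, which again forces $P_t$ to approach $O_{t_0}$ on the N\'eron model of the singular fiber. In each case one extracts, by reading off the orders of vanishing at $t_0$ of $q_t$, of $1-w_t$, and of the finitely many offending factors, a leading term $-\kappa\log|u(t)|_v$ with $\kappa\in\bQ$; since only continuity is needed, the remainder --- a still-convergent product and finitely many non-vanishing factors --- extends continuously to $t_0$. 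It then remains to check $\kappa=\hat\lambda_{E,\ord_{t_0}}(P)$: the latter is computed from the same data, being governed by $\tfrac12 B_2(\beta_0)$ times the order of vanishing of the Tate parameter in the multiplicative-reduction case, and by the order of vanishing of $x(P)$ at $t_0$ (via property (2) of the local height) when $P_{t_0}=O_{t_0}$. So the two leading coefficients agree and $V_{P,t_0,v}$ extends continuously across $t_0$.

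For a non-archimedean place $v$, I would first appeal to Theorem \ref{Silverman triviality}: for all but finitely many $v$ one has $\hat\lambda_{E_t,v}(P_t)=\lambda_{D_E(P),v}(t)$ on $U\setminus\{t_0\}$, and after shrinking $U$ into the locus where the only relevant $\xi_\gamma$ is attached to $t_0$ itself, $\lambda_{D_E(P),v}$ differs from $-\hat\lambda_{E,\ord_{t_0}}(P)\log|u(t)|_v$ by a bounded, locally constant term; hence there $V_{P,t_0,v}$ is locally constant, in particular continuous. For the remaining finitely many places one runs the analysis of the archimedean case, replacing the complex uniformization by the non-archimedean Tate parametrization over $\bC_v$ at places where $E_t$ has multiplicative reduction and by the arithmetic-intersection description of $\hat\lambda_{E_t,v}$ on the N\'eron model at places of good or additive reduction; the leading $\log|u(t)|_v$-coefficient is read off from how the section meets the special fibre and matches $\hat\lambda_{E,\ord_{t_0}}(P)$. (Equivalently, projecting to $\pr^1$ via the $x$-coordinate, $\hat\lambda_{E_t,v}$ is, up to a continuous term, half the dynamical escape-rate function of the degree-$4$ Latt\`es map $f_t$ on $E_t$, and the variation of escape-rate functions through the degeneration at $t_0$ is governed by $\log|\Res F_t|_v$ for a homogeneous lift $F_t$, with $\ord_{t_0}(\Res F_t)$ supplying precisely the singular coefficient; this is the perspective of \cite{DWY:Lattes}.)

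The main obstacle is the analysis at a critical $t_0$; the nonsingular locus, and --- once the divergences are isolated --- the continuity of the remainder, are routine. The hardest case is when the section limits onto a non-identity component of the N\'eron model of $E_{t_0}$ (equivalently $\beta_0$ rational with $w_0$ a root of unity, or, $v$-adically, $P$ meeting a nonzero component of the special fibre): there several factors of the product become small simultaneously, and one must track their orders of vanishing in a way exactly compatible with the component combinatorics of the N\'eron model, so that the total divergence equals the intersection-theoretic quantity $\hat\lambda_{E,\ord_{t_0}}(P)$. Making this compatibility precise --- that the complex-analytic and the function-field computations of one and the same N\'eron local height agree --- is the crux of the argument.
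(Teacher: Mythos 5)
The paper does not prove this statement: it is quoted verbatim from Silverman \cite[Theorem~II.0.1]{Silverman:VCHII}, so there is no in-paper argument to compare yours against. Your sketch is, however, a faithful reconstruction of Silverman's actual method: isolate the critical parameters, expand the archimedean local height via the elliptic (resp.\ Tate) parametrization as $\tfrac12 B_2(\beta)\log|q|+\log|1-w|+\sum_{n\ge1}\log|(1-q^nw)(1-q^nw^{-1})|$ plus a minimal-model correction, and match the coefficient of the divergent $\log|u(t)|_v$ term against $\hat{\lambda}_{E,\ord_{t_0}}(P)$. The authors themselves carry out exactly this computation, at all places, in the multiplicative-reduction case (Lemma \ref{bad fibers}), where the identity $\hat{\lambda}_{E,\ord_{t_0}}(P)=\ord_{t_0}(1-w)+\tfrac12 B_2\bigl(\ord_{t_0}w/\ord_{t_0}\psi\bigr)\ord_{t_0}\psi$ is precisely your coefficient-matching step; so your outline is consistent with how this circle of ideas is used elsewhere in the paper.

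Two cautions. First, what you have written is a strategy, not a proof: the case you correctly flag as the crux --- additive reduction, and sections limiting onto a non-identity component of the N\'eron special fibre, where several factors degenerate simultaneously --- is exactly where Silverman's paper spends its effort, and you defer it. Second, invoking Theorem \ref{Silverman triviality} to dispose of almost all non-archimedean places is potentially circular if the goal is to prove Silverman's continuity theorem from scratch, since the quasi-triviality of \cite{Silverman:VCHIII} is developed after, and partly on top of, the local results of \cite{Silverman:VCHII}. Within the present paper's logic (both results are quoted as black boxes) the shortcut is harmless, and it is avoidable in any case: at the places where the chosen Weierstrass model has good reduction at every point of a neighborhood of $t_0$ and $x(P)$ is integral, the local height is given by the elementary formula $\tfrac12\log^{+}|x(P_t)|_v$ up to a term in $\log|\Delta(t)|_v$, and continuity there is immediate.
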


\bigskip
\section{A dynamical perspective}
\label{dynamics}

Recall that the N\'eron-Tate height $\hat{h}_E$ and its local counterparts $\hat{\lambda}_{E,v}$ can be defined dynamically.  Letting $E$ be an elliptic curve defined over a number field $K$, the multiplication-by-2 endomorphism $\phi$ on $E$ descends to a rational function of degree 4 on $\pr^1$, via the standard quotient identifying a point $P$ with its additive inverse:
\begin{equation} \label{projection}
\xymatrix{ E \ar[d]_\pi \ar[r]^\phi & E\ar[d]^\pi \\   \pr^1   \ar[r]^{f_\phi} & \pr^1  }
\end{equation}
An elementary, but key, observation is that a point is torsion on $E$ if and only if its quotient in $\pr^1$ is preperiodic for $f_\phi$.  The height $\hat{h}_E$ on $E(\Kbar)$ satisfies
	$$\hat{h}_E (P) = \lim_{n\to\infty} \frac{1}{4^n} h(f_\phi^n(\pi P))$$
where $h$ is the standard logarithmic Weil height on $\pr^1(\Kbar)$.  Now let $E\to B$ be an elliptic surface defined over a number field $K$, and let $P: B\to E$ be a section, also defined over $K$.  In this section, we use this perspective to give a proof of subharmonicity of the local height functions $t\mapsto \hat{\lambda}_{E_t,v}(P_t)$ and the extensions $V_{P, t_0, v}$ of \eqref{variation function}.  We will present this fact as an immediate consequence of now-standard complex-dynamical convergence arguments, at least when the fiber $E_t$ is smooth and the local height $\hat{\lambda}_{E_t,v}(P_t)$ is finite.  Near singular fibers, we utilize the maximum principle and standard results on removable singularities for subharmonic functions.  The same reasoning applies in both archimedean and non-archimedean settings.    

In \S\ref{subsection:measures} we provide the background to justify the explicit description of the limiting distribution $\mu_{P,v}$ at the archimedean places $v$ of $K$, as mentioned in Remark \ref{measures}.

\subsection{Canonical height and escape rates}  \label{ss:dynamics}
As in \S\ref{preliminaries}, we let $E$ be an elliptic curve in Weierstrass form, defined over a product-formula field $\mathcal{F}$ of characteristic 0.  We define a rational function $f = \phi/\psi$ on $\pr^1$ by the formula
	$$f(x(P))=x([2]P)$$ 
for all $P \in E$. Here $x(P)$ is the $x-$coordinate for a point $P\in E$; this function $x$ plays the role of $\pi$ in \eqref{projection}. In coordinates, we have $\phi(x)=x^4-b_4x^2-2b_6x-b_8$ and $\psi(x)=4x^3+b_2x^2+2b_4x+b_6=(2y+a_1x+a_3)^2$ for $P = (x,y)$.  

By a {\em lift} of $f$, we mean any homogeneous polynomial map F on $\bA^2$, defined over $\mcF$, so that $\tau \circ F = f \circ \tau$, where $\tau: \bA^2\setminus \{(0,0)\} \to \pr^1$ is the tautological projection.  A {\em lift} of a point $x \in \pr^1$ is a choice of $X \in \bA^2\setminus \{(0,0)\}$ so that $\tau(X) = x$.

The {\em standard lift} of $f$ will be the map $F:\mathbb{A}^2\to \mathbb{A}^2$ defined by 
\begin{equation}  \label{standard lift}
	F(z,w) = \left(w^4\phi(z/w), w^4\psi(z/w)\right).
\end{equation}
For each $v \in \mathcal{M}_\mcF$, the $v$-adic {\em escape rate} is defined by 
$$\mcG_{F,v}(z,w)=\displaystyle\lim_{\substack{n\to\infty}}\frac{\log||F^n(z,w)||_v}{4^n}$$
where 
	$$\|(z,w)\|_v = \max\{|z|_v, |w|_v\}.$$  
Any other lift of $f$ is of the form $cF$ for some $c \in \mcF^*$; observe that 
$$\mcG_{cF,v} = \mcG_{F,v} + \frac{1}{3} \log|c|_v.$$
Note that 
	$$\mcG_{F,v}(\alpha x, \alpha y) = \mcG_{F,v}(x,y) + \log |\alpha|_v$$
for any choice of lift $F$. Furthermore, $\mcG_{F,v}$ is continuous on $(\bC_v)^2\setminus \{(0,0)\}$, as proved in the archimedean case by \cite{Hubbard:Papadopol, Fornaess:Sibony}.  For non-archimedean absolute values $v$, $\mcG_{F,v}$ extends continuously to the product of Berkovich affine lines $\bA^{1, an}_v \times \bA^{1,an}_v \setminus \{(0,0)\}$ \cite[Chapter 10]{BRbook}.

\begin{proposition}  \label{lambda and G}
For the standard lift $F$ of $f$, and for each place $v$ of $\mcF$, the local canonical height function satisfies
	$$\hat{\lambda}_{E,v}(P) = \frac12 \mcG_{F,v}(x,y) - \frac12 \log|y|_v - \frac{1}{12} \log|\Delta|_v$$
where $x(P) = (x:y)$.
\end{proposition}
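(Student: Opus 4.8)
The plan is to verify that the right-hand side, call it $\ell_v(P) := \frac12 \mcG_{F,v}(x,y) - \frac12 \log|y|_v - \frac{1}{12}\log|\Delta|_v$ (with $(x:y)$ a chosen lift of $x(P)$), satisfies the three characterizing properties of the N\'eron local height $\hat{\lambda}_{E,v}$ recalled in \S\ref{preliminaries}; since those three properties determine $\hat{\lambda}_{E,v}$ uniquely, this suffices. First I would check the expression is well-defined, i.e.\ independent of the choice of lift $(x:y)$ of $x(P)$: replacing $(x,y)$ by $(\alpha x, \alpha y)$ changes $\mcG_{F,v}$ by $\log|\alpha|_v$ (stated in the excerpt) and changes $-\frac12\log|y|_v$ by $-\frac12\log|\alpha|_v$; these do not cancel. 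So in fact I should be more careful: the notation ``$x(P) = (x:y)$'' must mean that $(x,y)$ is a \emph{specific} affine representative — presumably $w = 1$, i.e.\ we use the affine $x$-coordinate $x = x(P)$ and $y = 1$ is \emph{not} what is meant either. The sensible reading is that the formula uses the standard affine chart: write $x(P) = x \in \bC_v$ (the usual Weierstrass $x$-coordinate) and evaluate $\mcG_{F,v}(x,1)$, and separately $y$ denotes the Weierstrass $y$-coordinate of $P$; then $\log|y|_v$ must really be $\log|\psi(x)|_v^{1/2} = \log|2y+a_1x+a_3|_v$ up to the identity $\psi(x) = (2y+a_1x+a_3)^2$ noted just above the Proposition. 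I would open the proof by pinning down this convention precisely, since it is the crux of making the statement literally true.

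With the convention fixed, the main computation is property (3), the functional equation under $[2]$. Using $f(x(P)) = x([2]P)$ and the standard lift $F$ from \eqref{standard lift}, the escape rate satisfies the telescoping identity $\mcG_{F,v}(F(X)) = 4\,\mcG_{F,v}(X)$ for any lift $X$ of $x(P)$. Taking $X = (x,1)$ so that $F(X) = (\phi(x), \psi(x))$, we get $\mcG_{F,v}(x([2]P)\text{-lift}) = 4\mcG_{F,v}(x,1)$ where the lift of $x([2]P)$ in question is $(\phi(x),\psi(x))$, not the normalized one. Converting to the normalized chart introduces a correction $-\log|\psi(x)|_v$ coming from the homogeneity of $\mcG_{F,v}$. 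Assembling $\ell_v([2]P) = \frac12\mcG_{F,v}(\phi(x)/\psi(x), 1) - \frac12\log|y([2]P)|_v - \frac1{12}\log|\Delta|_v$ and substituting, the $\phi,\psi$ terms and the $y$-coordinate doubling formula should combine to reproduce exactly $4\ell_v(P) - \log|2y+a_1x+a_3|_v + \frac14\log|\Delta|_v$; I expect the $\Delta$ bookkeeping (coefficient $\frac1{12}$ versus the $\frac14$ appearing in property (3)) to be the fiddly part, and this is where I anticipate the main obstacle — keeping the homogeneity corrections and the discriminant weights consistent through the substitution.

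For property (2), the asymptotic $\ell_v(P) - \frac12\log|x(P)|_v \to$ (a limit) as $P \to O$: near $O$ the point $P = (x,y)$ has $|x|_v, |y|_v \to \infty$ with $y^2 \sim x^3$, so $\|(x,1)\|_v = |x|_v$ for large $|x|_v$ and $\mcG_{F,v}(x,1) = \log|x|_v + \mcG_{F,v}(1, 1/x) \to \log|x|_v + \mcG_{F,v}(1,0)$ using continuity of $\mcG_{F,v}$ on $(\bC_v)^2\setminus\{0\}$ (and its Berkovich extension in the non-archimedean case). Then $\ell_v(P) - \frac12\log|x(P)|_v = \frac12\mcG_{F,v}(1,1/x) - \frac12\log|y|_v + \frac12\log|x|_v - \frac1{12}\log|\Delta|_v$, and $\frac12(\log|x|_v - \log|y|_v) = \frac12\log|x/y|_v \to 0$ since $x^3/y^2 \to 1$ forces $|x/y|_v \to 1$ (more precisely $|x|_v^3/|y|_v^2 \to 1$, which gives $|x/y|_v^2 = |x|_v^2/|y|_v^2 \to |x|_v^{-1} \cdot 1$... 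I must be careful: $|y|_v^2 \sim |x|_v^3$ means $|x/y|_v \sim |x|_v^{-1/2} \to 0$, so this term does \emph{not} vanish and instead the correct statement is that $\ell_v - \frac12\log|x|_v$ has a limit because the divergent pieces cancel against the growth of $\mcG_{F,v}$) — I would track this limit explicitly and confirm it exists and is finite, which is property (2). Finally, property (1), continuity on $E(\bC_v)\setminus\{O\}$ and boundedness away from $O$, is inherited directly from the continuity of $\mcG_{F,v}$ on $(\bC_v)^2 \setminus \{(0,0)\}$ (cited from \cite{Hubbard:Papadopol, Fornaess:Sibony, BRbook}) together with the fact that $\psi(x(P)) = (2y+a_1x+a_3)^2$ vanishes precisely at the $2$-torsion, where one checks the apparent pole of $-\frac12\log|y|_v$ is cancelled by a compensating zero of $\mcG_{F,v}(\phi(x),\psi(x))/4$; away from $O$ the lift $(x,1)$ stays in a compact subset of $(\bC_v)^2\setminus\{0\}$ after the usual rescaling, giving the bound. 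By uniqueness of the N\'eron local height, $\ell_v = \hat{\lambda}_{E,v}$, completing the proof.
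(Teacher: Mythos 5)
Your overall strategy --- verify the three characterizing properties of $\hat{\lambda}_{E,v}$ from \S\ref{preliminaries} and invoke uniqueness --- is exactly the paper's (its proof is a one-line appeal to precisely this). But your execution derails at the very first step. You claim the right-hand side is not invariant under replacing the lift $(x,y)$ by $(\alpha x,\alpha y)$, and on that basis you reinterpret $y$ as the Weierstrass $y$-coordinate. That claim is an arithmetic slip: the term appearing in the formula is $\tfrac12\mcG_{F,v}(x,y)$, not $\mcG_{F,v}(x,y)$, so it changes by $\tfrac12\log|\alpha|_v$, which cancels exactly against the change $-\tfrac12\log|\alpha|_v$ of the term $-\tfrac12\log|y|_v$. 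The formula is well defined with $(x:y)$ any homogeneous coordinates for $x(P)\in\pr^1$, and the Weierstrass-$y$ reading you substitute for it is not equivalent (it fails property (3)).

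Because of this misreading, the verifications you flag as ``fiddly'' or leave unconfirmed are genuinely missing from your write-up, even though they are short once the convention is fixed. Take the lift $(x,1)$ with $x$ the affine $x$-coordinate, so $\ell_v(P)=\tfrac12\mcG_{F,v}(x,1)-\tfrac1{12}\log|\Delta|_v$. For (3): a lift of $x([2]P)$ is $F(x,1)=(\phi(x),\psi(x))$, and $\mcG_{F,v}(F(x,1))=4\,\mcG_{F,v}(x,1)$, whence $\ell_v([2]P)=2\mcG_{F,v}(x,1)-\tfrac12\log|\psi(x)|_v-\tfrac1{12}\log|\Delta|_v=4\ell_v(P)-\log|2y+a_1x+a_3|_v+\tfrac14\log|\Delta|_v$, using $\psi(x)=(2y+a_1x+a_3)^2$ and $\tfrac13-\tfrac1{12}=\tfrac14$; no $y$-coordinate duplication formula is needed. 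For (2): since $F(1,0)=(1,0)$ one has $\mcG_{F,v}(1,0)=0$, so as $P\to O$ (i.e.\ $|x|_v\to\infty$) continuity gives $\mcG_{F,v}(x,1)-\log|x|_v=\mcG_{F,v}(1,1/x)\to 0$ and $\ell_v(P)-\tfrac12\log|x(P)|_v\to-\tfrac1{12}\log|\Delta|_v$; none of the divergence-cancellation you worry about arises, and there is likewise no singularity to excuse at the $2$-torsion for property (1). You should correct the well-definedness check, fix the convention, and then carry (2) and (3) to completion rather than stating that you ``expect'' or ``anticipate'' they work out.
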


\proof
The proof is immediate from the properties of $\mcG_{F,v}$ by checking the three characterizing conditions for $\hat{\lambda}_{E,v}$.  
\qed

\subsection{Variation of canonical height: subharmonicity}
Now let $K$ be a number field and $E \to B$ an elliptic surface defined over a number field $K$ with zero section $O:  B \to E$.  Let $k = \Kbar(B)$; viewing $E$ as an elliptic curve defined over $k$, we also fix a point $P \in E(k)$.    Recall the function $V_{P, t_0, v}(t)$ defined in \eqref{variation function}.  

\begin{theorem}  \label{subharmonic}
For every $t_0 \in B(\Kbar)$ and uniformizer $u$ in $k$ at $t_0$, the function 
	$$V_{P, t_0, v}(t):=\hat{\lambda}_{E_t,v}(P_t) + \hat{\lambda}_{E,\ord_{t_0}} (P) \log|u(t)|_v,$$
extends to a continuous and subharmonic function on a neighborhood of $t_0$ in the Berkovich analytification $B_v^{an}$.
\end{theorem}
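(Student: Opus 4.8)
The plan is to establish subharmonicity of $V_{P,t_0,v}$ on a punctured neighborhood of $t_0$ by a dynamical argument, and then to fill in the point $t_0$ using the continuity already supplied by Theorem~\ref{Silverman continuity} together with a removable-singularity theorem. First I would fix a Weierstrass model for $E\to B$ with coefficients regular on a neighborhood $U$ of $t_0$ and discriminant $\Delta(t)$, and let $f_t=\phi_t/\psi_t$ be the associated degree-$4$ map on $\pr^1$ with standard lift $F_t$ as in \eqref{standard lift}. After shrinking $U$ we may assume $\Delta(t)\ne 0$ and $P_t\ne O_t$ for all $t\in U\setminus\{t_0\}$, since each of these conditions fails at only finitely many points; then $x(P_t)$ (which equals $\infty$ exactly when $P_t=O_t$) is a regular function on $U\setminus\{t_0\}$. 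Proposition~\ref{lambda and G}, applied to the fiber $E_t$ over $\bC_v$, gives
\[
  \hat{\lambda}_{E_t,v}(P_t)=\tfrac12\,\mcG_{F_t,v}\bigl(x(P_t),1\bigr)-\tfrac1{12}\log|\Delta(t)|_v,\qquad t\in U\setminus\{t_0\}.
\]

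The key step is to show that $t\mapsto \mcG_{F_t,v}(x(P_t),1)$ is subharmonic on $U\setminus\{t_0\}$. Since $\Delta(t)\ne 0$ there, the resultant $\Res(\phi_t,\psi_t)$ (a power of $\Delta(t)$ by the classical identity) is nonzero, so $\phi_t,\psi_t$ have no common root; hence for each $n$ the two coordinates of $F_t^{\,n}(x(P_t),1)$ are regular functions of $t$ on $U\setminus\{t_0\}$ that never vanish simultaneously, and $g_n(t):=4^{-n}\log\|F_t^{\,n}(x(P_t),1)\|_v$ is continuous and subharmonic there; this is the standard fact that the logarithm of the norm of an analytic map is subharmonic, valid verbatim at archimedean places and on the Berkovich curve $B_v^{an}$. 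Homogeneity of $F_t$ and compactness give, on each compact subset of $U\setminus\{t_0\}$, uniform bounds $c\,\|(z,w)\|_v^4\le\|F_t(z,w)\|_v\le C\,\|(z,w)\|_v^4$ with $0<c\le C$ (positivity of $c$ uses the nonvanishing resultant), whence the telescoping estimate $|g_{n+1}-g_n|\le\mathrm{const}\cdot 4^{-n}$ and locally uniform convergence $g_n\to\mcG_{F_t,v}(x(P_t),1)$. A locally uniform limit of subharmonic functions is subharmonic, so the limit is subharmonic on $U\setminus\{t_0\}$; subtracting the harmonic function $\tfrac1{12}\log|\Delta(t)|_v$ and adding the harmonic function $\hat{\lambda}_{E,\ord_{t_0}}(P)\log|u(t)|_v$ shows that $V_{P,t_0,v}$ is continuous and subharmonic on $U\setminus\{t_0\}$.

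It remains to handle the point $t_0$. By Theorem~\ref{Silverman continuity}, $V_{P,t_0,v}$ is continuous on a classical neighborhood of $t_0$, hence bounded above near $t_0$ (at non-archimedean places one also invokes the maximum principle, to control $V_{P,t_0,v}$ on a Berkovich neighborhood of $t_0$ in terms of its values on the classical points). A function subharmonic on $U\setminus\{t_0\}$ and bounded above near $t_0$ extends uniquely to a subharmonic function on $U$, with value $\limsup_{t\to t_0}V_{P,t_0,v}(t)$ at $t_0$; this is the removable-singularity theorem for subharmonic functions, valid on Riemann surfaces and, by the work of Thuillier and of Baker--Rumely, on Berkovich curves. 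Since the continuous extension of $V_{P,t_0,v}$ takes exactly this value at $t_0$, the subharmonic extension coincides with the continuous extension, which is the assertion of the theorem.

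The main obstacle is precisely the behavior at $t_0$: the dynamical argument produces subharmonicity only on the locus of smooth fibers with $P_t\ne O_t$, because when $\Delta(t_0)=0$ the standard lift $F_{t_0}$ acquires a common zero, so that $\mcG_{F_t,v}$ degenerates and the lower bound $c$ in the telescoping estimate is lost, while when $P_{t_0}=O_{t_0}$ the quantity $\mcG_{F_t,v}(x(P_t),1)$ blows up. One therefore genuinely relies on the external input of Silverman's continuity theorem, together with the maximum principle and the removable-singularity theorem, to control this single point. A secondary technical point is to have in hand the Berkovich-curve analogues of the potential-theoretic facts used above (subharmonicity of $\log$ of the norm of an analytic map, stability of subharmonicity under locally uniform limits, and the removable-singularity theorem), all of which are available in the work of Thuillier and of Baker--Rumely, so that the archimedean and non-archimedean cases can indeed be treated by the same argument.
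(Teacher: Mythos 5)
Your proposal is essentially correct and follows the same dynamical strategy as the paper: express the local height via Proposition~\ref{lambda and G} as an escape rate plus harmonic corrections, obtain subharmonicity of the approximants $4^{-n}\log\|F_t^n(\cdot)\|_v$ by the standard telescoping argument, and import continuity from Theorem~\ref{Silverman continuity}. The one genuine structural difference is how the point $t_0$ itself is handled. You prove subharmonicity only on the punctured disk and then invoke a removable-singularity theorem for subharmonic functions, taking the required upper bound near $t_0$ from Silverman's continuity theorem. The paper instead renormalizes: writing $F^n(X)=(A_n,B_n)$ and $a_n=\min\{\ord_{t_0}A_n,\ord_{t_0}B_n\}$, it considers $h_{n,v}(t)=4^{-n}\log\|F_t^n(X_t)/u(t)^{a_n}\|_v$, so that each approximant is subharmonic on the \emph{full} (Berkovich) disk including $t_0$; a uniform upper bound then comes from the maximum principle applied to each $h_{n,v}$ (Lemma~\ref{upper bound}), and subharmonicity of the limit follows from stability of subharmonicity under upper-semicontinuous regularization of a bounded limsup. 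Both routes are legitimate, but the paper's renormalization buys something concrete at the non-archimedean places: Theorem~\ref{Silverman continuity} is a statement about classical ($\bC_v$-rational) points, and the maximum principle on the \emph{punctured} Berkovich disk does not by itself promote an upper bound on classical points near $t_0$ to an upper bound at the nearby points of Types II--IV (the puncture is part of the boundary of that domain, so the inequality you need is circular as stated). Your parenthetical appeal to ``the maximum principle'' at this step therefore needs to be fleshed out --- e.g.\ by observing that your $g_n$ are logarithms of sup-seminorms of analytic functions, so their values at Type II points are controlled by their values at classical points, or simply by adopting the paper's normalization so that the maximum principle applies on the unpunctured disk. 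At the archimedean places your argument is complete as written.
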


The continuity was already established in Theorem \ref{Silverman continuity}, though it was not explicitly stated for the Berkovich space.  The argument below takes care of that.  We begin with a lemma. 

\begin{lemma} \label{harmonic}
Fix $\alpha \in k^*$ and $t_0 \in B(\Kbar)$.  Let $u \in k$ be a uniformizer at $t_0$.  For each place $v$ of $K$, the function 
	$$t\mapsto \log|\alpha_t|_v - (\ord_{t_0} \alpha) \, \log|u(t)|_v$$
is harmonic in a neighborhood of $t_0$ in the Berkovich analytification $B_v^{an}$.
\end{lemma}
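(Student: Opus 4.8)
The plan is to reduce the statement to the basic fact that $\log\|\cdot\|_v$ applied to a nonvanishing analytic function is harmonic. Write $\alpha \in k^* = \Kbar(B)^*$ as a rational function on $B$, and let $n = \ord_{t_0}\alpha \in \bZ$ be its order of vanishing (or $-$order of pole) at $t_0$. Set $\beta := \alpha/u^n \in k^*$. By construction $\beta$ has neither a zero nor a pole at $t_0$, so $\ord_{t_0}\beta = 0$, and there is a Zariski-open neighborhood $W \subset B$ of $t_0$ on which $\beta$ is a regular, nowhere-vanishing function. Since $\alpha_t = \beta_t \, u(t)^n$ for $t$ near $t_0$ (away from the finitely many other zeros/poles of $\alpha$ and $u$), we get
$$\log|\alpha_t|_v - n\log|u(t)|_v = \log|\beta_t|_v$$
as an identity of functions on a punctured neighborhood of $t_0$ in $B_v^{an}$; it therefore suffices to show that $t\mapsto \log|\beta_t|_v$ is harmonic on a full neighborhood of $t_0$.

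For this I would invoke the standard theory of the Laplacian on Berkovich curves (as in \cite[Chapter 10]{BRbook} at non-archimedean places, and the classical theory at archimedean places). The point is that $\beta$, being invertible in the local ring $\mathcal{O}_{B,t_0}$, defines an invertible analytic function on a small neighborhood $U$ of $t_0$ in $B_v^{an}$; indeed after choosing coordinates we may regard $\beta$ as a unit in the Tate algebra of a small (closed or open) disc around $t_0$. The function $\log|g|_v$ is harmonic on the locus where an analytic function $g$ is nonzero — at archimedean $v$ this is because $\log|g|$ is locally the real part of a holomorphic branch of $\log g$, hence harmonic; at non-archimedean $v$ this is the content of the Poincar\'e–Lelong formula on Berkovich curves, $\Delta \log|g|_v = \sum_{\zeta} (\ord_\zeta g)\,\delta_\zeta$, which vanishes on $U$ precisely because $g=\beta$ has no zeros or poles there. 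Either way $\log|\beta_t|_v$ is harmonic on $U$, which is what we want.

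The only mild subtlety — and the step I expect to need the most care — is the passage to the \emph{Berkovich} analytification at non-archimedean $v$, since "harmonic" there means annihilated by the Berkovich Laplacian on a neighborhood of the (type-II or type-I) point $t_0$, not merely harmonic on classical points. This is handled by noting that $\beta$ extends to an analytic function on a strict affinoid or open-disc neighborhood of $t_0$ in $B_v^{an}$ (using that $B$ is smooth at $t_0$ and $\beta$ is a unit locally), so the Poincar\'e–Lelong computation applies verbatim on that Berkovich neighborhood and gives $\Delta\log|\beta|_v = 0$ there. No new input beyond \cite{BRbook} is required; everything else is the elementary factorization $\alpha = \beta u^n$.
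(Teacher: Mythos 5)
Your proof is correct and is essentially the same argument the paper intends: the paper disposes of the lemma by citing Silverman's Lemma II.1.1(c) plus a removable-singularities statement for harmonic functions (and \cite[Proposition 7.19]{BRbook} for the Berkovich side), and your explicit factorization $\alpha=\beta\,u^{n}$ with $\beta$ a unit at $t_0$, followed by harmonicity of $\log|\beta|_v$ (real part of a holomorphic logarithm at archimedean places, Poincar\'e--Lelong on the Berkovich curve at non-archimedean places), is exactly what those citations unwind to. No gap.
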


\proof
This is Silverman's \cite[Lemma \Rmnum{2}.1.1(c)]{Silverman:VCHII} plus a removable singularities lemma for harmonic functions.  See also \cite[Proposition 7.19]{BRbook} for the extension of a harmonic function to a disk in the Berkovich space $B_v^{an}$. 
\qed

\medskip
Fix $P \in E(k)$.  Let $F$ and $X$ be lifts of $f$ and $x(P)$ to $k^2$, respectively.  Iterating $F$, we set
	$$(A_n, B_n) := F^n(X) \in k^2$$
and observe that
\begin{equation} \label{order growth}
	\mathcal{G}_{F, \ord_{t_0}}(X) = - \lim_{n\to\infty} \frac{\min\{\ord_{t_0} A_n, \ord_{t_0} B_n\}}{4^n}
\end{equation}
from the definition of the escape rate.  We let $F_t$ and $X_t$ denote the specializations of $F$ and $X$ at a point $t \in B(\Kbar)$; they are well defined for all but finitely many $t$.  Observe that if $F$ is the standard lift for $E$ then so is $F_t$ for all $t$.

\begin{proposition}  \label{escape rate subharmonic}
Fix $P \in E(k)$, $t_0 \in B(\Kbar)$, and $v\in M_K$.  For any choice of lifts $F$ of $f$ and $X$ of $x(P)$, the function 
	$$G_P(t;v) := \mathcal{G}_{F_t, v}(X_t) + \mathcal{G}_{F, t_0}(X) \log|u(t)|_v$$
extends to a continuous and subharmonic function in a neighborhood of $t_0$ in $B_v^{an}$.  
\end{proposition}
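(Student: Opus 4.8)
The plan is to reduce the statement to a pure convergence problem for escape-rate functions and then invoke standard pluripotential theory (resp. potential theory on Berkovich curves). First I would observe that the quantity $\mathcal{G}_{F_t,v}(X_t)$ is, by definition, the locally uniform limit as $n\to\infty$ of the functions
\[
   \frac{1}{4^n}\log\|F_t^n(X_t)\|_v = \frac{1}{4^n}\log\max\{|A_n(t)|_v,\, |B_n(t)|_v\},
\]
where $(A_n,B_n) = F^n(X)\in k^2$ as in the setup. For fixed $n$, $A_n$ and $B_n$ are rational functions on $B$, so $t\mapsto \frac{1}{4^n}\log\max\{|A_n(t)|_v,|B_n(t)|_v\}$ is subharmonic on the complement of their common zeros and poles in $B_v^{an}$ (a maximum of logarithms of absolute values of meromorphic functions), with logarithmic singularities controlled by the orders $\ord_{t_0}A_n$, $\ord_{t_0}B_n$. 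Adding the correction term $\frac{1}{4^n}\bigl(-\min\{\ord_{t_0}A_n,\ord_{t_0}B_n\}\bigr)\log|u(t)|_v$ removes the singularity at $t_0$: by Lemma \ref{harmonic} applied to a local generator of the relevant ideal, the corrected function extends harmonically across $t_0$ up to the contribution of the zero/pole divisor away from $t_0$, and in any case is subharmonic in a fixed neighborhood $U$ of $t_0$ independent of $n$ once we know the singularities at points other than $t_0$ stay a bounded distance away — which follows because $\mathcal{G}_{F,v}$ is finite and continuous on $U\setminus\{t_0\}$ for $t$ with smooth fiber.

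The core step is then the uniform convergence. By \eqref{order growth}, $\frac{1}{4^n}\bigl(-\min\{\ord_{t_0}A_n,\ord_{t_0}B_n\}\bigr)\to \mathcal{G}_{F,t_0}(X)$, so the correction terms converge to $\mathcal{G}_{F,t_0}(X)\log|u(t)|_v$. For the main terms, the telescoping estimate
\[
   \Bigl|\,\tfrac{1}{4^{n+1}}\log\|F_t^{n+1}(X_t)\|_v - \tfrac{1}{4^{n}}\log\|F_t^{n}(X_t)\|_v\,\Bigr|
   \;\le\; \frac{C_v(t)}{4^{n+1}},
\]
valid because $F_t$ is a homogeneous map of degree $4$ with $\|F_t(Y)\|_v \asymp \|Y\|_v^4$ uniformly for $t$ in a compact set avoiding the bad locus (this is exactly the standard argument of \cite{Hubbard:Papadopol, Fornaess:Sibony} in the archimedean case and \cite[Chapter 10]{BRbook} non-archimedean), shows the sequence is uniformly Cauchy on compact subsets of $U\setminus\{t_0\}$. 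A locally uniform limit of subharmonic functions is subharmonic (or $\equiv -\infty$, which is excluded here since $\hat\lambda_{E_t,v}(P_t)$ is finite for $t$ with smooth fiber), so $G_P(\cdot;v)$ is subharmonic on $U\setminus\{t_0\}$; it is moreover bounded there thanks to Theorem \ref{Silverman continuity} (the function $V_{P,t_0,v}$, which differs from $G_P$ by the harmonic/bounded terms coming from Proposition \ref{lambda and G} — namely $-\tfrac12\log|y(P_t)|_v - \tfrac1{12}\log|\Delta_t|_v$ plus scaling corrections — extends continuously across $t_0$). Being subharmonic and bounded near the puncture $t_0$, it extends subharmonically across $t_0$ by the removable-singularity theorem for subharmonic functions (in the Berkovich setting, e.g.\ the analogue via \cite[Chapter 8]{BRbook}), and the extension is continuous by Theorem \ref{Silverman continuity}.

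The main obstacle I anticipate is not the convergence estimate itself but ensuring the constant $C_v(t)$ is genuinely locally bounded — i.e.\ that the lift $F_t$ does not degenerate (the two homogeneous quartics $w^4\phi(z/w)$ and $w^4\psi(z/w)$ retain no common factor, equivalently $\Delta_t\neq 0$) for $t$ in a punctured neighborhood of $t_0$, and that $X_t\neq(0,0)$ there. Both hold off the finite set of $t$ with singular fiber or with $P_t = O_t$, so after shrinking $U$ this is automatic; the only real care is that this finite bad set may include $t_0$ itself, which is precisely why the statement only claims subharmonicity \emph{near} $t_0$ after the logarithmic correction, and why the removable-singularity step is doing essential work. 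The non-archimedean case requires nothing new beyond citing that $\mathcal{G}_{F_t,v}$ extends continuously to $\bA^{1,an}_v\times\bA^{1,an}_v\setminus\{(0,0)\}$ and that the potential-theoretic facts used (subharmonicity of $\log$ of a meromorphic function, stability of subharmonicity under uniform limits, removable singularities) all have Berkovich analogues in \cite{Thuillier:these, BRbook}.
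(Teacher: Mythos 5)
Your proposal is correct and follows the same core strategy as the paper: approximate $\mathcal{G}_{F_t,v}(X_t)$ by the truncated escape rates $\frac{1}{4^n}\log\|F_t^n(X_t)\|_v$, correct each by $-\frac{a_n}{4^n}\log|u(t)|_v$ with $a_n=\min\{\ord_{t_0}A_n,\ord_{t_0}B_n\}$, and use the telescoping estimate to get locally uniform convergence away from $t_0$. The one place you genuinely diverge is at $t_0$ itself: you establish subharmonicity only on the punctured disk and then cross $t_0$ by combining an upper bound for $G_P$ near $t_0$ (imported from Theorem \ref{Silverman continuity} via Proposition \ref{lambda and G} and Lemma \ref{harmonic}) with a removable-singularity theorem for subharmonic functions. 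The paper instead first checks that the conclusion is independent of the choice of lifts, then normalizes $F$ and $X$ to be regular and non-degenerate at $t_0$ so that each corrected approximant $h_{n,v}$ is subharmonic on the \emph{full} Berkovich disk including $t_0$, proves a uniform upper bound for the whole sequence by the maximum principle (Lemma \ref{upper bound}), and concludes via the stability of subharmonicity under usc-regularized limsups of uniformly bounded-above sequences. The two routes are interchangeable here; the paper's version keeps the subharmonicity argument self-contained within the dynamical construction (Silverman's continuity enters only for the continuity claim), while yours trades the normalization of lifts and the maximum-principle lemma for one more citation of Silverman plus a removable-singularity statement, which the paper itself acknowledges as available at all places.
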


\proof
First observe that the conclusion does not depend on the choices of $F$ and $X$.  Indeed, 
\begin{eqnarray*}
\mathcal{G}_{c_t F_t, v}(\alpha_t X_t) + \mathcal{G}_{c F, t_0}(\alpha X) \log|u(t)|_v &=& 
\mathcal{G}_{F_t, v}(X_t) + \mathcal{G}_{F, t_0}(X) \log|u(t)|_v  \\
&&	+ \; \frac{1}{3}\left( \log|c_t|_v - (\ord_{t_0}c) \log|u(t)|_v\right) \\
&&	+ \; \log|\alpha_t|_v - (\ord_{t_0}\alpha)\log|u(t)|_v 
\end{eqnarray*}
for any $c, \alpha \in k^*$.  So by Lemma \ref{harmonic} the function $G_P(t; v)$ is continuous and subharmonic for one choice if and only if it is continuous and subharmonic for all choices.   

Let $F$ be the standard lift of $f$.  Suppose that $P=O$.  Since $F(1,0) = (1,0)$, we compute that 
 $$G_{O}(t;v)=  \mathcal{G}_{{F_t}, v}(1,0)+ \mathcal{G}_{F, t_0}(1,0) \log|u(t)|_v \equiv 0.$$

Now suppose that $P \neq O$.  Fix $t_0\in B(\overline{K})$ and local uniformizer $u$ at $t_0$.  Choose a lift $F$ of $f$ so that the coefficients of $F$ have no poles at $t_0$, with $F_{t_0} \not= (0,0)$.  Choose lift $X$ of $x(P)$ so that $X_t$ is well defined for all $t$ near $t_0$ and $X_{t_0} \not= (0,0)$.  As above, we write 
	$$F^n(X) = (A_n, B_n)$$
and put 
	$$a_n = \min\{\ord_{t_0} A_n, \ord_{t_0} B_n\}$$
so that $a_n\geq 0$ for all $n$ and $a_0 = 0$.  Set
      $$F_n(t) = F_t^n(X_t)/u(t)^{a_n}.$$
For each place $v$, we set
	$$h_{n,v}(t) = \frac{\log\|F_n(t)\|_v}{4^n}.$$
By construction, the limit of $h_{n,v}$ (for $t$ near $t_0$ with $t\not=t_0$) is exactly the function $G_P$ for these choices.  In fact, for $t$ in a small neighborhood of $t_0$, but with $t\not= t_0$, the function $f_t$ on $\pr^1$ is a well-defined rational function of degree 4; so the specialization of the homogeneous polynomial map $F_t$ satisfies $F_t^{-1}\{(0,0)\} = \{(0,0)\}$.  Furthermore, as the coefficients of $F_t$ depend analytically on $t$, the functions $h_{n,v}$ converge locally uniformly to the function $G_P$ away from $t=t_0$.  This can be seen with a standard telescoping sum argument, used often in complex dynamics, as in \cite[Proposition 1.2]{Branner:Hubbard:1}.  In particular, $G_P$ is continuous on a punctured neighborhood of $t_0$.  

At the archimedean places $v$, and for each $n$, the function $h_{n,v}$ is clearly continuous and subharmonic in a neighborhood of $t_0$.  At non-archimedean places $v$, this definition extends to a Berkovich disk around $t_0$, setting 
	$$h_{n,v}(t) = \frac{1}{4^n} \max\{\log[A_n(T)/T^{a_n}]_t, \log[B_n(T)/T^{a_n}]_t\}$$
where $[\cdot]_t$ is the seminorm on $K[[T]]$ associated to the point $t$.  Each of these functions $h_{n,v}$ is continuous and subharmonic for $t$ in a Berkovich disk around $t_0$.  (Compare \cite{BRbook} Example 8.7, Proposition 8.26(D), and equation (10.9).)  

\begin{lemma} \label{upper bound}
For all $v$, and by shrinking the radius $r$ if necessary, the functions $h_{n,v}$ are uniformly bounded from above on the (Berkovich) disk $D_r$.  
\end{lemma}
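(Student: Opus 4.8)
\proof
We may assume $P\neq O$, since for $P=O$ one has $h_{n,v}\equiv 0$. Recall from the proof of Proposition~\ref{escape rate subharmonic} that the lift $F$ was chosen with coefficients regular at $t_0$ and the lift $X$ with $X_{t_0}\neq(0,0)$. The plan is to bound each $h_{n,v}$ on the boundary of a small disk, where $u$ stays bounded away from $0$, and then propagate the bound inward by the maximum principle. Shrinking $r$, assume the (finitely many) coefficients of $F$ and the two coordinate functions of $X$ are regular on the closed disk $\overline{D}_r$; then, by the telescoping-sum argument in the proof of Proposition~\ref{escape rate subharmonic}, every $h_{n,v}$ is continuous and subharmonic on a neighborhood of $\overline{D}_r$ that may be taken independent of $n$. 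By the maximum principle for subharmonic functions --- classical at archimedean $v$, and \cite{BRbook}, \cite{Thuillier:these} at non-archimedean $v$, where the boundary of $\overline{D}_r$ reduces to the single Gauss point $\zeta_{t_0,r}$ --- it suffices to bound $h_{n,v}$, uniformly in $n$, on $\partial D_r$: the circle $\{|u(t)|_v=r\}$ when $v$ is archimedean, and the point $\zeta_{t_0,r}$ when $v$ is non-archimedean.

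On $\partial D_r$ we write $h_{n,v}(t)=\tfrac{1}{4^n}\log\|F_t^n(X_t)\|_v-\tfrac{a_n}{4^n}\log|u(t)|_v$ and estimate the two terms. For the second term, $|u(t)|_v=r$ on $\partial D_r$, and $a_n/4^n$ converges to $-\mathcal{G}_{F,\ord_{t_0}}(X)$ by \eqref{order growth}, so $A_1:=\sup_n a_n/4^n<\infty$ and $\bigl|\tfrac{a_n}{4^n}\log|u(t)|_v\bigr|\le A_1\,|\log r|$. For the first term, regularity of the coefficients of $F$ on $\overline{D}_r$ gives a constant $C=C(v,F,r)$ with $\|F_t(z,w)\|_v\le C\,\|(z,w)\|_v^4$ for all $t\in\overline{D}_r$ (and at $\zeta_{t_0,r}$ when $v$ is non-archimedean); iterating,
$$\log\|F_t^n(X_t)\|_v\le 4^n\log\|X_t\|_v+\tfrac{4^n-1}{3}\log C,$$
so $\tfrac{1}{4^n}\log\|F_t^n(X_t)\|_v\le \log\|X_t\|_v+\tfrac13\log^+C$, which is bounded above on $\partial D_r$ because the coordinates of $X$ are regular there. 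Adding the two estimates yields an upper bound for $h_{n,v}$ on $\partial D_r$ that is independent of $n$, and the maximum principle finishes the proof.

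The step requiring care is the reduction to $\partial D_r$. Near $t_0$, neither $\tfrac{1}{4^n}\log\|F_t^n(X_t)\|_v$ nor $-\tfrac{a_n}{4^n}\log|u(t)|_v$ need be bounded on $D_r$ --- when $E_{t_0}$ is singular the former can tend to $-\infty$ and the latter to $+\infty$ as $t\to t_0$ --- so $h_{n,v}$ cannot be controlled pointwise; it is precisely subharmonicity, together with the boundedness of $|u|_v$ away from $0$ on $\partial D_r$, that transports the easy boundary estimate to all of $D_r$.
\qed
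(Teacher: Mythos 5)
Your proof is correct and rests on the same mechanism as the paper's: a bound on the boundary of a small disk (the circle $\{|u(t)|_v=r\}$ at archimedean places, the Type II point $\zeta_{t_0,r}$ at non-archimedean ones), uniform in $n$, which the maximum principle for subharmonic functions then propagates to all of $D_r$. The one place you diverge is in how that boundary bound is produced: the paper simply invokes the locally uniform convergence of $h_{n,v}$ to the continuous limit $G_P$ away from $t_0$ (already established by the telescoping-sum argument in the proof of Proposition \ref{escape rate subharmonic}), so that $M_v=\sup_n\max_{|u(t)|_v=r}h_{n,v}(t)$ is finite for free; you instead re-derive an explicit bound from $\|F_t(z,w)\|_v\le C\,\|(z,w)\|_v^4$ iterated $n$ times, together with the boundedness of $a_n/4^n$ coming from \eqref{order growth}. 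Your version is slightly longer but more self-contained, since it does not lean on the convergence statement (and in fact only uses an upper estimate, which is the easy half of that convergence); the paper's is shorter because the convergence is already in hand. Your closing remark correctly identifies why the detour through the boundary is unavoidable. Both arguments are sound.
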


\proof
As observed above, the functions $h_{n,v}$ converge locally uniformly away from $t=t_0$ to the continuous function $G_P(t)$.  Choose a small radius $r$, and let 
	$$M_v = \sup_n \max_{|t|_v=r} h_{n,v}(t)$$
which is finite by the convergence.  Because the functions are subharmonic, the Maximum Principle implies that $h_{n,v}(t) \leq M_v$ throughout the disk of radius $r$, for all $n$.  For the non-archimedean places, there is also a Maximum Principle on the Berkovich disk, where the role of the circle of radius $r$ is played by the Type II point associated to the disk of radius $r$ (see \cite{BRbook} Proposition 8.14).  
\qed

\medskip
We can now complete the proof of Proposition \ref{escape rate subharmonic}.  As each $h_{n,v}$ is subharmonic, and the functions are uniformly bounded from above on the disk by Lemma \ref{upper bound}, we know that the (upper-semicontinuous regularlization of the) limsup of these functions is subharmonic.  See \cite{BRbook} Proposition 8.26(F) for a proof in the non-archimedean case. 
\qed

\medskip
\proof[Proof of Theorem \ref{subharmonic}]
Subharmonicity now follows from Proposition \ref{lambda and G}, Lemma \ref{harmonic}, and Proposition \ref{escape rate subharmonic}.  The continuity at each archimedean place is the content of Theorem \ref{Silverman continuity}.  The continuity at each non-archimedean place is a combination of the continuity on the punctured Berkovich disk (as in the proof of Proposition \ref{escape rate subharmonic}) and the continuity on Type I (classical) points given in Theorem \ref{Silverman continuity}.  
\qed

\subsection{The measures on the base}  \label{subsection:measures}
Here we provide more details about the description of the measures appearing in the statement of Corollary \ref{equidistribution on B}, as discussed in Remark \ref{measures}.  

Fix an archimedean place $v$ and any point $t_0 \in B(\Kbar)$.  Choosing a uniformizer $u$ at $t_0$, recall the definition of $V_{P, t_0, v}$ from \eqref{variation function}.  We define 
	$$\mu_{P,v} := dd^c V_{P,t_0, v}(t)$$
on a neighborhood of $t_0$ in $B_v^{an}$; note that this is indepedent of the choice of $u$.  Note that $\mu_{P,v}$ can be expressed as 
	$$\mu_{P,v} = dd^c \hat{\lambda}_{E_t, v}(P_t)$$
for $t$ outside of the finitely many points in the support of the divisor $D_E(P)$ or where the fiber $E_t$ is singular.  Note, further, that $\mu_{P,v}$ assigns no mass to any individual point $t_0$, because the potentials are bounded by Theorem \ref{subharmonic}.  The details on the metric and the equidistribution theorem in Section \ref{sec:metric} will show that these are exactly the measures that arise as the distribution of the points of small height in Corollary \ref{equidistribution on B}.

It is well known that the local height function on a smooth elliptic curve is a potential for the Haar measure.  That is, for fixed $t$ we have
	$$dd^c \hat{\lambda}_{E_t, v}(\cdot) = \omega_t - \delta_o$$
where $\omega_t$ is the normalized Haar measure on $E_t$ and $\delta_o$ is a delta-mass supported at the origin of $E_t$; see, e.g., \cite[Theorem II.5.1]{Lang:Arakelov}.  We present an alternative proof of this fact related to dynamics as part of Proposition \ref{measure description}, as a consequence of Proposition \ref{lambda and G}.  

\begin{proposition}  \label{measure description}
Let $E \to B$ be an elliptic surface and $P: B \to E$ a section, both defined over a number field $K$.  Let $\cS \subset E$ be the union of the finitely many singular fibers in $E$.  For each archimedean place $v$ of $K$, there is a positive, closed (1,1) current $T_v$ on $E\setminus \cS$ with locally continuous potentials so that $T_v|_{E_t}$ is the Haar measure on each smooth fiber, and $P^* T_v$ is equal to the measure $\mu_{P,v}$.   
\end{proposition}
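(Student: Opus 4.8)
The plan is to construct the current $T_v$ dynamically using the escape-rate function $\mcG_{F,v}$, exploiting the fiberwise description given by Proposition \ref{lambda and G}, and then identify its pullback by $P$ with $\mu_{P,v}$ using Theorem \ref{subharmonic}. First I would fix an archimedean place $v$ and work over $\bC_v \cong \bC$. Over a Zariski-open $B^0 \subset B$ where the fibers are smooth, one can choose (locally, or after a base change) a Weierstrass model and hence a standard lift $F = F_t$ of the doubling map $f_t$ depending holomorphically on $t \in B^0(\bC)$. The map $(t, (z,w)) \mapsto (t, F_t(z,w))$ is then a holomorphic family of homogeneous degree-$4$ maps on $\bA^2$, and the associated escape rate $\mcG_{F}(t; z, w) = \lim_n 4^{-n} \log\|F_t^n(z,w)\|_v$ is a continuous, plurisubharmonic function on the total space of the $\bA^2$-bundle minus its zero section, by the locally uniform convergence argument already invoked in the proof of Proposition \ref{escape rate subharmonic} (the telescoping estimate of \cite{Branner:Hubbard:1}). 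I would then set, on the preimage $\widetilde{E^0}$ of $E^0 := \pi^{-1}(B^0)$ in this $\bA^2$-bundle,
$$\widetilde{T}_v = dd^c\!\left( \mcG_{F}(t; z,w) - \tfrac12 \log|w|_v - \tfrac{1}{12}\log|\Delta(t)|_v \right),$$
where $w$ is the second (affine) coordinate; the point of the correction terms is precisely Proposition \ref{lambda and G}, which says the quantity in parentheses restricts on each fiber to $\hat\lambda_{E_t,v}$ composed with $x$. One checks that the homogeneity $\mcG_{F,v}(\alpha z, \alpha w) = \mcG_{F,v}(z,w) + \log|\alpha|_v$, combined with the transformation law $\mcG_{cF,v} = \mcG_{F,v} + \tfrac13\log|c|_v$ under change of lift, makes $\widetilde{T}_v$ descend to a well-defined positive closed $(1,1)$-current $T_v$ on $E^0$ with locally continuous potentials, independent of all choices: on overlaps the potentials differ by $\log$ of the modulus of a nowhere-zero holomorphic function, which is pluriharmonic and kills $dd^c$.

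Next I would verify the fiberwise claim: $T_v|_{E_t}$ is the normalized Haar measure $\omega_t$ on each smooth fiber. This is immediate from the construction because the potential of $T_v$ restricted to $E_t$ is $\hat\lambda_{E_t,v}$ (up to adding $\frac12\log|y|_v + \frac{1}{12}\log|\Delta|_v$ pulled back from the $x$-coordinate and constants), and it is classical — see \cite[Theorem II.5.1]{Lang:Arakelov}, or alternatively the dynamical identity $dd^c \mcG_{F_t,v} = f_t^*(\cdot) $-type functional equation on $\pr^1$ — that $dd^c \hat\lambda_{E_t,v} = \omega_t - \delta_o$, so after accounting for the pole of $x$ at $O$ the total restricts to $\omega_t$. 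Alternatively, and more in the spirit of the dynamical proof the paper promises, one observes that the fiberwise escape rate $\mcG_{F_t,v}$ is the $f_t$-dynamical Green's function on $\bA^2$, whose $dd^c$ descends to the measure of maximal entropy of $f_t$ on $\pr^1$, which is the pushforward of $\omega_t$ under $x$; pulling back by $x$ (degree $2$, ramified over $2$-torsion, which the Haar measure does not see as atoms) recovers $\omega_t$.

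Finally, for the pullback identity $P^*T_v = \mu_{P,v}$: since $P: B^0 \to E^0$ is holomorphic, $P^*T_v$ is a well-defined positive $(1,1)$-current (a measure) on $B^0$, and by Proposition \ref{lambda and G} its local potential at a point $t_0$ with $P_{t_0} \ne O_{t_0}$ and $E_{t_0}$ smooth is exactly $\hat\lambda_{E_t,v}(P_t) = G_P(t;v)$ up to a pluriharmonic term, so $P^*T_v = dd^c \hat\lambda_{E_t,v}(P_t) = \mu_{P,v}$ there; this already covers all but finitely many $t_0$ by the discussion in \S\ref{subsection:measures}. The main obstacle — and the step I would spend the most care on — is handling the finitely many bad points: $t_0 \in \supp D_E(P)$ (where $P_t \to O_t$), and points where one wishes to assert the equality of measures globally on $B$ rather than just on $B^0$ minus a finite set. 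For the first, at a point where $P_t$ meets the zero section the naive potential $\hat\lambda_{E_t,v}(P_t)$ blows up logarithmically, but $V_{P,t_0,v}(t) = \hat\lambda_{E_t,v}(P_t) + \hat\lambda_{E,\ord_{t_0}}(P)\log|u(t)|_v$ extends continuously and subharmonically across $t_0$ by Theorem \ref{subharmonic}; since $dd^c \log|u(t)|_v = \delta_{t_0}$ locally, we get $P^*T_v = \mu_{P,v} := dd^c V_{P,t_0,v}$ in a neighborhood of $t_0$ once one checks that $P^*T_v$ — computed, say, by deleting $t_0$ and using that $T_v$ has continuous potentials away from $\cS$ and $P(t) \to O_{t_0}$ makes $P^*T_v$ put no atom at $t_0$ (here one uses that a positive current with continuous potential pulled back along a curve meeting the polar locus of the potential gives a measure with at most the expected logarithmic-derivative atom, which is absorbed into the $\log|u|$ term) — matches. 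Restricting the whole construction to $E \setminus \cS$ (rather than $E^0$) requires only noting that $T_v$, defined on $E^0$, extends across the finitely many smooth fibers lying over $\supp D_E(P)$ by the removable-singularity theorem for psh functions (its potential is locally bounded there, being $\hat\lambda_{E_t,v}$ plus bounded terms, away from the discrete set $P(B) \cap O(B)$, and bounded outside a pluripolar set), yielding the stated positive closed current with locally continuous potentials on all of $E \setminus \cS$. I would organize the write-up so that the dynamical construction and the descent occupy the bulk, and the bad-point analysis is dispatched by direct appeal to Theorem \ref{subharmonic} and standard removable-singularity results, exactly as the paper signals in \S\ref{subsection:measures}.
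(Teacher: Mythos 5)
Your overall strategy is the paper's: build $T_v$ from the escape rate $\mcG_{F_t,v}$, which is continuous and plurisubharmonic in $(t,z,w)$ jointly, descend it to the surface, identify the fiber restriction with Haar measure via the measure of maximal entropy of $f_t$, and identify $P^*T_v$ with $\mu_{P,v}$ via Proposition \ref{lambda and G}. However, the specific potential you assign to $T_v$ defines the wrong current, and as written two of the three asserted properties fail for it. By including the term $-\tfrac12\log|w|_v$ (and $-\tfrac1{12}\log|\Delta|_v$) you are taking $dd^c$ of, essentially, the fiberwise local height $\hat{\lambda}_{E_t,v}$ itself; but $\hat{\lambda}_{E_t,v}\to+\infty$ along the zero section, so this potential is not locally continuous (nor plurisubharmonic) there, and the resulting current carries a negative multiple of the current of integration along $O(B)$: it is not positive, and its restriction to a smooth fiber is $\omega_t-\delta_o$, not $\omega_t$. (There is also a missing factor of $\tfrac12$ in front of $\mcG$ relative to Proposition \ref{lambda and G}, and a factor of $\tfrac12$ is genuinely needed in the construction because $x\colon E_t\to\pr^1$ has degree $2$, so $x^*(x_*\omega_t)=2\omega_t$.) Your phrase ``after accounting for the pole of $x$ at $O$ the total restricts to $\omega_t$'' is exactly where the error hides: the $-\delta_o$ is produced by the $\log|w|$ term you inserted, so it cannot also be cancelled by it.

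The fix is what the paper does: define $T_v$ locally as $\tfrac12\,dd^c\bigl(\mcG_{F_t,v}\circ\pi\bigr)$ with \emph{no} correction terms. This has continuous local potentials everywhere on $E\setminus\cS$, is positive and closed, and restricts on each smooth fiber to $\tfrac12\,x^*(x_*\omega_t)=\omega_t$, since $x_*\omega_t$ is the measure of maximal entropy of the Latt\`es map $f_t$ --- this is precisely the ``alternative'' argument you sketch, and it is the correct one. The terms $-\tfrac12\log|y|_v-\tfrac1{12}\log|\Delta|_v$ should enter only when comparing $P^*T_v$ with $\mu_{P,v}$: there they are harmonic in $t$ away from finitely many points (Lemma \ref{harmonic}), so Proposition \ref{lambda and G} gives $P^*T_v=dd^c\hat{\lambda}_{E_t,v}(P_t)=\mu_{P,v}$ directly, and the atoms and removable-singularity issues your final paragraph labors over never arise, because $T_v$ itself has no singular part and $\mu_{P,v}$ is already defined across the bad points by Theorem \ref{subharmonic}.
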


\begin{remark}
As $T_v$ has continuous potentials, the restriction $T_v|_{E_t}$ and the pullback $P^* T_v$ are well defined.  That is, we have $T_v|_{E_t} = dd^c (u|_{E_t})$ where $u$ is a locally defined potential of $T_v$, and $P^*T_v = dd^c (u\circ P)$ locally on $B$.  The measure $\mu_{P,v}$ has no atoms, so it is determined by $T_v$ along the image of $P$ in $E\setminus \cS$.  
\end{remark}

\proof[Proof of Proposition \ref{measure description}]
Let us fix any small neighborhood $U$ in the base curve $B(\bC)$ so that all fibers $E_t$ are smooth for $t\in U$.  Let $f_t$ be the map on $\pr^1$ defined in \S\ref{ss:dynamics}; by shrinking $U$ if necessary, we can find lifts $F_t$ of $f_t$ that are holomorphic in $t \in U$.  From \cite{Hubbard:Papadopol, Fornaess:Sibony} (or the proof of \cite[Proposition 1.2]{Branner:Hubbard:1}), we know that the escape rate
	$$\mcG_{F_t,v}(z,w)=\displaystyle\lim_{n\to\infty}\frac{\log||F_t^n(z,w)||_v}{4^n}$$
is continuous and plurisubharmonic as a function of $(t, z,w) \in U\times (\bC^2\setminus \{(0,0)\})$.  Therefore
	$$dd^c \mcG_{F_t,v}(z,w)$$
projects to a closed and positive (1,1)-current $G_v$ on the complex surface $U\times\pr^1$, with locally continuous potentials.  This current $G_v$ has the property that, restricted to each fiber $\pr^1$, its total mass is 1; and the measure on the fiber is the measure of maximal entropy for the rational map $f_t$ \cite{Lyubich:entropy, Hubbard:Papadopol}.  

The restriction $E|_U$ of the elliptic surface $E$ over $U$ maps with degree 2 to the complex surface $U\times \pr^1$ by the projection $\pi$ of \eqref{projection}.  The current $G_v$ can be pulled back to $E$ as $\frac12 \, dd^c(g \circ \pi)$ where $g$ is a locally-defined continuous potential for $G_v$.  Covering the base of $E\setminus \cS$ by sets of the form $U$, the local definitions glue to form the closed, positive $(1,1)$-current $T_v$ on $E\setminus \cS$.  

If $P: B \to E$ is a section defined over the number field $K$, then $P^* T_v$ has potential given locally by 
	$$\frac12\;  g\circ \pi \circ P =  \frac12 \; \mcG_{F_t, v}(X_t)$$
for any lift $X_t$ of $\pi(P_t) \in \pr^1$.  Proposition \ref{lambda and G} yields that $P^* T_v$ must coincide with $\mu_{P,v}$.  

Finally, to conclude that $T_v|_{E_t}$ is equal to the normalized Haar measure $\omega_t$, we may use the well-known dynamical fact that for each fixed $t$ in the base, the measure $\omega_t$ projects by $\pi$ to $\pr^1$ to the unique measure of maximal entropy for the map $f_t$; see, e.g., \cite[\S 7]{Milnor:Lattes}.
\qed

\bigskip
\section{The adelic metric and equidistribution}
\label{sec:metric}

In this section we give the proofs of Theorem \ref{good height} and Corollary \ref{equidistribution on B}.  

We first outline the proofs.  Let $E \to B$ be an elliptic surface defined over a number field $K$ with zero section $O:  B \to E$, and let $P:B\to E$ be a section also defined over $K$ so that $\hat{h}_E(P) \not=0$.  Recall from \S\ref{set up} that we introduced a $\bQ$-divisor 
	$$D_E(P) = \sum_{\gamma \in B(\Kbar)} \hat{\lambda}_{E, \ord_\gamma}(P) \cdot (\gamma)$$
on $B$.  By enlarging $K$, we may assume that $\supp D_E(P)$ lies in $B(K)$.  We will define an adelic metric on the ample line bundle $\cL$ associated to the divisor $D_E(P)$, inducing a height function $h_{\cL}$ such that
	$$h_{\cL}(t) = \hat{h}_{E_t}(P_t) \mbox{ for all but finitely many } t \in B(\Kbar)$$
and
	$$h_{\cL}(t) \geq 0 \mbox{ for all } t \in B(\Kbar).$$
Applying Silverman's results on the variation of canonical height, Theorems \ref{Silverman triviality} and \ref{Silverman continuity}, we will deduce that the metric is continuous and adelic.  From Theorem \ref{subharmonic}, we will conclude that the metric is also semi-positive in the sense of Zhang \cite{Zhang:positive}.  We will use Zhang's inequalities \cite{Zhang:adelic} to deduce that 
	$$h_{\cL}(B) = 0.$$
Consequently, we will be able to apply the equidistribution results of Chambert-Loir, Thuillier, and Yuan \cite{ChambertLoir:equidistribution, Thuillier:these, Yuan:equidistribution} to complete our proofs.

\subsection{The metric and its properties} \label{metric definition}
Let $m\in\mathbb{N}$ be such that 
	$$D = m\cdot D_E(P)$$
is an integral divisor.  Let $\mcL_m$ be the associated line bundle on $B$.  Note that $\deg(\mcL_m)=m \, \hat{h}_E(P)>0$ so $\mathcal{L}_m$ is ample; by replacing $m$ with a multiple, we may assume that $\mcL_m$ is very ample.  

Fix a place $v$ of $K$.  Let $U$ be an open subset of $B_v^{an}$.  Each section $s \in \mcL_m(U)$ is identified with a meromorphic function $f$ on $U$ satisfying 
	$$(f) \geq -D.$$
We set
	$$\|s(t)\|_v = \left\{ \begin{array}{ll} 
	e^{-m\hat{\lambda}_{E_t,v}(P_t)}|f(t)|_v & \mbox{if } f(t) \mbox{ is finite and nonzero } \\
	0 & \mbox{if } \ord_t f > - m \, \hat{\lambda}_{E, \ord_t}(P) \\
	e^{-m \,V_{P, t, v}(t)} & \mbox{otherwise}, 
	\end{array} \right.  $$
taking the locally-defined uniformizer $u = f^{1/\ord_t f}$ at $t$ in the definition of $V_{P, t, v}$ from \eqref{variation function}.  

\begin{theorem}  \label{metric properties}
The metric $\|\cdot \| = \{\|\cdot\|_v\}_{v \in M_K}$ on $\mcL_m$ is continuous, semipositive, and adelic.
\end{theorem}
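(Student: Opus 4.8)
The plan is to verify each of the three properties---continuity, semipositivity, adelicity---separately, reducing each to results already established in the excerpt. The metric is defined piecewise according to the local behaviour of the section $s \leftrightarrow f$ at a point $t$, so the first task is to see that these pieces glue to a well-defined continuous function on the Berkovich analytification $B_v^{an}$. Away from $\supp D$ and the singular fibers, $\|s(t)\|_v = e^{-m\hat\lambda_{E_t,v}(P_t)}|f(t)|_v$, and continuity there is immediate from the continuity of $\hat\lambda_{E_t,v}(P_t)$ as a function of $t$ (part of Theorem~\ref{subharmonic}, or Theorem~\ref{Silverman continuity}) together with the continuity of $|f|_v$. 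At a point $t_0 \in \supp D$ (or a singular fiber), one writes $f = u^{\ord_{t_0}f}\cdot g$ with $g$ a local unit, and observes that
$$-\log\|s(t)\|_v = m\hat\lambda_{E_t,v}(P_t) + (\ord_{t_0}f)\log|u(t)|_v - \log|g(t)|_v.$$
Since $\ord_{t_0}f \geq -m\,\hat\lambda_{E,\ord_{t_0}}(P)$, the middle term can be split as $-m\,\hat\lambda_{E,\ord_{t_0}}(P)\log|u(t)|_v$ plus a term $(\ord_{t_0}f + m\,\hat\lambda_{E,\ord_{t_0}}(P))\log|u(t)|_v$ which is either identically $-\infty$ in the limit (the case $\ord_{t_0}f > -m\,\hat\lambda_{E,\ord_{t_0}}(P)$, giving $\|s\|_v\to 0$, consistent with the middle clause) or else nonnegative-order and harmonic near $t_0$. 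Combining the first two terms gives exactly $-m\,V_{P,t_0,v}(t)$, which by Theorem~\ref{subharmonic} extends continuously (and subharmonically) across $t_0$. This matches the third clause of the definition and establishes continuity at every point and every place; one should also check independence of the local trivialization, which follows because changing $f$ by a local unit changes $-\log\|s\|_v$ by $\log|{\rm unit}|_v$, a harmonic (indeed continuous) quantity.

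For semipositivity in the sense of Zhang, one must check that $-\log\|s(\cdot)\|_v$ is a subharmonic function on $U \setminus \{f = 0, \infty\}$ for every local section $s$, i.e. that the curvature form is a positive current; equivalently, that the metric is a uniform limit of ``model'' (algebraic) metrics arising from semistable models, or directly that $dd^c(-\log\|s\|_v) \geq 0$. The identity above expresses $-\log\|s\|_v$ locally as $m\,V_{P,t_0,v}(t)$ plus the harmonic function $(\ord_{t_0}f+m\hat\lambda_{E,\ord_{t_0}}(P))\log|u(t)|_v - \log|g(t)|_v$ (using Lemma~\ref{harmonic} to see the $\log|u|_v$ piece is harmonic after the order correction), and away from $\supp D$ it is simply $m\hat\lambda_{E_t,v}(P_t)$ plus the harmonic function $\log|f(t)|_v$. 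In either case subharmonicity is exactly the content of Theorem~\ref{subharmonic}. Thus at every place the metric has non-negative curvature; at non-archimedean places this is the continuous-semipositive condition of Zhang~\cite{Zhang:positive}, using that continuous subharmonic functions on Berkovich curves are limits of model functions with the right positivity (e.g.\ via \cite{Thuillier:these}).

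For adelicity one needs the metric to agree, for all but finitely many places $v$, with the metric induced by a single fixed model of $(B, \mcL_m)$ over the ring of $S$-integers of $K$. This is where Silverman's quasi-triviality result does the work: by Theorem~\ref{Silverman triviality}, there is a finite set $S$ of places such that $\hat\lambda_{E_t,v}(P_t) = \lambda_{D_E(P),v}(t)$ for all $v \notin S$ and all $t \notin \supp D_E(P)$, where $\lambda_{D_E(P),v}$ is the explicit reference local height built from the functions $\xi_\gamma$. The reference local heights $\lambda_{D_E(P),v}$ are, by their very definition as $\log^+$ of rational functions (up to the factor $2g+1$), precisely the local metrics coming from a fixed projective model; after clearing the denominator $m$ and possibly enlarging $S$ to absorb the finitely many $v$ where the $\xi_\gamma$ or the model have bad reduction, the metric $\|\cdot\|_v$ coincides with this model metric for all $v \notin S$. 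Hence $\{\|\cdot\|_v\}$ is an adelic metric. I expect the main obstacle to be the careful bookkeeping at the points of $\supp D$ and at singular fibers---verifying that the three-clause definition is genuinely consistent there and that the identification with $-m\,V_{P,t_0,v}$ (hence continuity and subharmonicity across those points) goes through uniformly in $v$---since everywhere else the statement is a direct translation of Theorems~\ref{Silverman triviality}, \ref{Silverman continuity}, and \ref{subharmonic}.
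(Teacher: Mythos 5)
Your proposal is correct and takes essentially the same route as the paper, which likewise derives continuity and semipositivity from Theorem \ref{subharmonic} (invoking Thuillier's theorem to identify subharmonicity of potentials with Zhang's notion of semipositivity at all places) and the adelic condition from Theorem \ref{Silverman triviality}; your local computation simply spells out what the paper leaves implicit. The only flaw is a sign slip: since $\|s(t)\|_v = e^{-m\hat{\lambda}_{E_t,v}(P_t)}|f(t)|_v$, the correct identity is $-\log\|s(t)\|_v = m\hat{\lambda}_{E_t,v}(P_t) - (\ord_{t_0}f)\log|u(t)|_v - \log|g(t)|_v = m\,V_{P,t_0,v}(t) - \bigl(\ord_{t_0}f + m\hat{\lambda}_{E,\ord_{t_0}}(P)\bigr)\log|u(t)|_v - \log|g(t)|_v$, so the first two terms combine to $+m\,V_{P,t_0,v}(t)$ rather than $-m\,V_{P,t_0,v}(t)$, and with this correction the conclusions you draw (the norm tends to $0$ when the inequality on $\ord_{t_0}f$ is strict, and the third clause is recovered when it is an equality) are exactly right.
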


\proof
The continuity and semipositivity follows from Theorem \ref{subharmonic}.  (In \cite{ChambertLoir:survey}, semipositivity of a continuously metrized line bundle on a curve is defined terms of subharmonicity of potentials for the curvature form at each archimedean place, and as a uniform limit of ``smooth semipositive" metrics at each non-archimedean place.  In \cite{Thuillier:these}, it is established that subharmonicity of potentials is a sufficient notion at all places, and he proves in \cite[Theorem 4.3.3]{Thuillier:these} that this notion of semipositivity coincides with that of Zhang \cite{Zhang:positive}.  See also \cite[Lemma 3.11, Theorem 3.12]{Favre:Gauthier:cubics} where this same argument is applied in a dynamical context.)  The adelic condition follows from Theorem \ref{Silverman triviality}.  
\qed

\subsection{The associated height function}
A height function on $B(\Kbar)$ is defined by setting 
	$$h_P(t) :=  \frac{1}{m} \, \frac{1}{| \Gal(\Kbar/K)\cdot t|} \, \sum_{s \in \Gal(\Kbar/K)\cdot t} \sum_{v\in M_K} -n_v \, \log\|\phi(s)\|_v$$
where $\phi$ is any global section of $\mcL_m$ which is nonvanishing along the Galois orbit of $t$, and $\|\cdot\|_v$ is the metric of \S\ref{metric definition}.   Recall that $\supp D_E(P) \subset B(K)$; we may assume that our sections $\phi$ are defined over $K$, and the product formula guarantees our height is independent of the choice of $\phi$.  

Our next goal is to prove the following two important facts about this height function $h_P$.  

\begin{proposition} \label{heightvsheight}
The height function $h_P$ satisfies
	$$h_P(t)= \hat{h}_{E_t}(P_t)$$
for all $t\in B(\Kbar)$ such that the fiber $E_t$ is smooth.
\end{proposition}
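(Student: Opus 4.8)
The plan is simply to unwind the definition of the metric $\|\cdot\|_v$ from \S\ref{metric definition} and of the height it induces. The metric was built so that, at a point $t$ where $E_t$ is smooth, $-\tfrac1m\log\|\phi(t)\|_v$ reproduces $\hat\lambda_{E_t,v}(P_t)$ up to a $\log|f|$-term coming from the chosen section $\phi$; one then checks that the $\log|f|$-terms cancel by the product formula, while the local heights reassemble into $\hat{h}_{E_t}(P_t)$ by the local decomposition recalled in \S\ref{preliminaries}.

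Fix $t\in B(\Kbar)$ with $E_t$ smooth, and put $L=K(t)$. Since $\mcL_m$ is very ample and $\supp D_E(P)\subset B(K)$, I choose a global section $\phi$ of $\mcL_m$ defined over $K$ and not vanishing at $t$ (hence along the whole Galois orbit of $t$); by the product formula $h_P(t)$ is independent of this choice. Write $\phi$ as a meromorphic function $f\in K(B)$ with $(f)\ge -D$. Because the metric is continuous and adelic, $h_P(t)$ may be computed over $L$, where the Galois orbit of $t$ is trivial (equivalently, one stays over $K$ and uses the standard bookkeeping relating the Galois-orbit average over $M_K$ to a sum over the places of $L$, exactly as in the definition of the reference height $h_{D_E(P)}$ in \S\ref{set up}), so that
$$h_P(t)=-\frac1m\sum_{w\in M_L}n_w\log\|\phi(t)\|_w .$$

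Consider first the generic case $t\notin\supp D_E(P)$. Then $\ord_t f=0$, so $f(t)\in L^{*}$ is finite and nonzero, and the first case of the metric gives $-\tfrac1m\log\|\phi(t)\|_w=\hat\lambda_{E_t,w}(P_t)-\tfrac1m\log|f(t)|_w$ for every $w\in M_L$. Hence
$$h_P(t)=\sum_{w\in M_L}n_w\,\hat\lambda_{E_t,w}(P_t)\;-\;\frac1m\sum_{w\in M_L}n_w\log|f(t)|_w .$$
The second sum vanishes by the product formula for $L$ (since $f(t)\in L^{*}$), and the first sum is precisely the local decomposition of the canonical height from \S\ref{preliminaries}, applied to the elliptic curve $E_t/L$ and the rational point $P_t\in E_t(L)$, hence equals $\hat{h}_{E_t}(P_t)$.

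It remains to treat a point $t$ with $E_t$ smooth but $t\in\supp D_E(P)$, that is $P_t=O_t$, where $\hat{h}_{E_t}(P_t)=0$. Now $\phi$ has a pole of order $\ord_t D=m\,\hat\lambda_{E,\ord_t}(P)$ at $t$, so we land in the third case of the metric and $-\tfrac1m\log\|\phi(t)\|_w=V_{P,t,w}(t)$, taken with the uniformizer $u=f^{1/\ord_t f}$. Expanding $V_{P,t,w}(t)=\lim_{s\to t}\bigl(\hat\lambda_{E_s,w}(P_s)+\hat\lambda_{E,\ord_t}(P)\log|u(s)|_w\bigr)$ and using the second of the three characterizing properties of the local height (\S\ref{preliminaries}) together with good reduction of $E$ at $t$ (equivalently, Proposition \ref{lambda and G} and the fact that the standard lift fixes $(1,0)$), one finds $V_{P,t,w}(t)=\tfrac12\log|c|_w-\tfrac1{12}\log|\Delta_t|_w$, with $c\in L^{*}$ the leading Laurent coefficient of the $x$-coordinate of $P$ at $t$ and $\Delta_t\in L^{*}$ the discriminant of $E_t$; summing over $w$ with weights $n_w$ and applying the product formula once more gives $h_P(t)=0=\hat{h}_{E_t}(P_t)$. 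I do not anticipate a serious obstacle: the proposition is essentially the verification that the metric of \S\ref{metric definition} was defined correctly, and every cancellation is an instance of the product formula. The one point deserving genuine care is the bookkeeping that identifies the orbit-over-$K$ definition of $h_P$ with the place-by-place decomposition of $\hat{h}_{E_t}(P_t)$ over $L$ (i.e.\ independence of the construction from the base field), together with the small separate computation at the finitely many smooth fibers where $P_t=O_t$.
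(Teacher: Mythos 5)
Your proposal is correct and follows essentially the same route as the paper: for $t\notin\supp D_E(P)$ the paper likewise unwinds the first case of the metric, cancels the $\log|f|$-terms by the product formula, and reassembles the local decomposition of $\hat{h}_{E_t}(P_t)$; for smooth fibers with $P_t=O_t$ it likewise evaluates $V_{P,t,v}(t)$ via Proposition \ref{lambda and G} and the fixed point $(1,0)$ of the standard lift, obtaining $\tfrac12\log|A_0(t_0)|_v-\tfrac1{12}\log|\Delta_E(t_0)|_v$ and concluding by the product formula. Your bookkeeping over $L=K(t)$ in place of the Galois-orbit average over $M_K$ is only a cosmetic difference.
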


\begin{proposition}  \label{height is nonnegative}
The height function $h_P$ satisfies
	$$h_P(t) \geq 0$$
for all $t \in B(\Kbar)$.
\end{proposition}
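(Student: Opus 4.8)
The plan is to reduce the global nonnegativity statement to the already-established fact that the canonical height on each smooth fiber is nonnegative, handling the finitely many ``bad'' points $t$ (singular fibers, or $P_t = O_t$) separately by a limiting argument. First I would recall that by Proposition \ref{heightvsheight} we have $h_P(t) = \hat{h}_{E_t}(P_t) \geq 0$ for every $t \in B(\Kbar)$ whose fiber $E_t$ is smooth, since the N\'eron--Tate height is always nonnegative. Thus the only points at which nonnegativity could fail are the finitely many $t_0 \in \supp D_E(P)$ together with the finitely many $t_0$ over which $E_{t_0}$ is singular; call this finite exceptional set $\Sigma \subset B(K)$ (we may assume $\Sigma \subset B(K)$ after enlarging $K$, as in \S\ref{set up}).

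For a point $t_0 \in \Sigma$, the strategy is to compute $h_P(t_0)$ as a sum of local contributions $-n_v \log\|\phi(s)\|_v$ over the Galois orbit and all places $v$, and to recognize each local term as the value at $t_0$ of the extended potential $V_{P, t_0, v}$ from \eqref{variation function}. Concretely, fixing a section $\phi$ of $\mcL_m$ that does not vanish identically and writing $u$ for a local uniformizer at a Galois conjugate $s$ of $t_0$, the definition of the metric in \S\ref{metric definition} gives $-\log\|\phi(s)\|_v = m\, V_{P, s, v}(s)$ (this is exactly the third case in the definition of $\|\cdot\|_v$, and one checks that the first case also limits to this as $t \to t_0$ by continuity). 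Hence
\[
h_P(t_0) = \frac{1}{|\Gal(\Kbar/K)\cdot t_0|} \sum_{s \in \Gal(\Kbar/K)\cdot t_0} \sum_{v \in M_K} n_v\, V_{P, s, v}(s).
\]
Now I would invoke Theorem \ref{subharmonic}: each $V_{P, s, v}$ extends to a continuous subharmonic function on a neighborhood of $s$ in $B_v^{an}$. For a subharmonic function $\varphi$ on a neighborhood of a point $s$, the value $\varphi(s)$ is bounded above by the average of $\varphi$ over a small circle (resp. the value at the Type II point of a small Berkovich disk), but more useful here is the reverse: I want a \emph{lower} bound on the sum, so I would argue by continuity from the smooth fibers. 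Since smooth fibers are Zariski-dense near $t_0$ and $h_P$ restricted to them is nonnegative, and since $h_P$ at $t_0$ is obtained from the \emph{same} adelically metrized line bundle, the global height function $t \mapsto h_P(t)$ is the height attached to the metrized bundle $\overline{\mcL}_m$ on all of $B(\Kbar)$, including $\Sigma$; so it suffices to show the height of \emph{any} point is nonnegative, knowing it holds on a dense set. This does not follow formally, so the honest route is the local one below.

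The main obstacle — and the step requiring care — is the sign at the exceptional points. Here is the mechanism: for $t_0 \in \Sigma$ corresponding to $\hat{\lambda}_{E, \ord_{t_0}}(P) = 0$ (the case $P_{t_0} = O_{t_0}$ but $E_{t_0}$ smooth is already handled; the genuinely new case is a singular fiber), the sum $\sum_v n_v V_{P, s, v}(s)$ equals $\lim_{t \to s}\big(\hat{h}_{E_t}(P_t) + (\ord_{t_0}\text{-type correction})\big)$, where the correction terms sum to $0$ over all places by the product formula applied to the uniformizer $u$ (this is precisely the content of Lemma \ref{harmonic} summed over $v$, since $\sum_v n_v \log|u(t)|_v = 0$ globally for $t$ defined over a number field). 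Therefore $h_P(t_0) = \lim_{t\to t_0} h_P(t) \geq 0$ by continuity of the global height together with nonnegativity on smooth fibers — provided the limit is taken along $t \in B(\Kbar)$ with smooth fibers, which form a dense subset. To make ``$\lim_{t\to t_0} h_P(t)$'' rigorous across all places simultaneously, I would use that $h_P$ is a height function induced by a continuous adelic metric (Theorem \ref{metric properties}): such a height, evaluated at a point $t_0$, is the sum of local evaluations of continuous functions, and each local function is the continuous extension of $V_{P, t_0, v}$ guaranteed by Theorem \ref{subharmonic}; passing to the limit inside each (finite, since the adelic condition of Theorem \ref{Silverman triviality} makes all but finitely many local terms vanish) sum gives $h_P(t_0) = \lim_{\substack{t \to t_0 \\ E_t \text{ smooth}}} \hat{h}_{E_t}(P_t) \geq 0$. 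The anticipated difficulty is purely in justifying this exchange of limit and the (finite but $t$-dependent) adelic sum uniformly; this is where the semipositivity/subharmonicity from Theorem \ref{subharmonic} and the finite set $S$ of bad places from Theorem \ref{Silverman triviality} do the real work, and I would spell out that for $t$ in a fixed small neighborhood of $t_0$ the set of places contributing nontrivially is uniformly bounded, so the exchange is legitimate.
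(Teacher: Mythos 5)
Your reduction to the finite exceptional set $\Sigma$ and the appeal to Proposition \ref{heightvsheight} on the smooth locus are fine, but the way you evaluate $h_P(t_0)$ for $t_0\in\Sigma$ --- as $\lim_{t\to t_0,\ E_t\ \mathrm{smooth}}\hat h_{E_t}(P_t)$ --- is a genuine gap, and it cannot be repaired by a more careful interchange of limit and adelic sum. The convergence $V_{P,t_0,v}(t)\to V_{P,t_0,v}(t_0)$ is a $v$-adic statement, taken in a $v$-adic neighborhood that is different for every place, and there is no non-trivial sequence of algebraic points $t_n$ approaching $t_0$ at all relevant places simultaneously: if $|u(t_n)|_v\to 0$ at one place while $|u(t_n)|_w\le 1$ at the places where $V_{P,t_0,w}$ vanishes near $t_0$, the product formula forces $|u(t_n)|_{v'}\to\infty$ at some other place, so $t_n$ leaves every $v'$-adic neighborhood of $t_0$. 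Global height functions are not continuous in the sense your argument requires (compare the naive height on $\pr^1(\Qbar)$, which vanishes on the dense set of roots of unity on the unit circle but is unbounded on any archimedean neighborhood). So nonnegativity of $h_P$ on the smooth fibers, together with place-by-place continuity of the local terms, does not determine --- or even bound from below --- the quantity $h_P(t_0)=\sum_v n_v V_{P,t_0,v}(t_0)$.

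What is actually needed at a bad point is a statement of \emph{global coherence} of the local values: one must show that the tuple $(V_{P,t_0,v}(t_0))_{v\in M_K}$ equals $(\log|x(t_0)|_v)_{v\in M_K}$ for a single algebraic number $x(t_0)$, so that the product formula forces $h_P(t_0)=0$ exactly. This is how the paper proceeds: it first performs a base change to reduce to semistable reduction, using Lemma \ref{reductionlemma} to show that the change in each $V_{P,t_0,v}(t_0)$ is $\log|f(t_0')|_v$ for a single algebraic value $f(t_0')$, hence invisible to the global sum; and then, for a fiber with multiplicative reduction, it carries out an explicit computation with the Tate parametrization and the explicit formulas for the local heights (Lemma \ref{bad fibers}) to identify $V_{P,t_0,v}(t_0)=\log|x(t_0)|_v$, where $x(t_0)$ is built from the leading coefficients of $\psi(t)$, $w(t)$ and $1-w(t)$ at $t_0$. (The same mechanism already appears in the paper's proof of Proposition \ref{heightvsheight} at points of $\supp D_E(P)$ with smooth fiber, via the dynamical computation of $\mcG_{F_{t_0},v}(A_0(t_0),0)$.) Subharmonicity and the finiteness of the set of bad places give continuity and semipositivity place by place, but not the arithmetic identity \emph{across} places that the product formula needs; that identity is the real content of the proposition and is absent from your proposal.
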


\proof[Proof of Proposition \ref{heightvsheight}]
First fix $t\in B(\Kbar)\setminus\supp D_E(P)$ with smooth fiber $E_t$.  Choose a section $\phi$ defined over $K$ that does not vanish along the Galois orbit of $t$, and let $f$ be the associated meromorphic function on $B$.  Then $f$ takes finite and nonzero values along the Galois orbit of $t$.  We have, 
\begin{eqnarray*}
	h_P(t) &=& \frac{1}{m}  \; \frac{1}{| \Gal(\Kbar/K)\cdot t|}  \sum_{s \in \Gal(\Kbar/K)\cdot t} \;
			\sum_{v \in M_K} \; n_v \, (m\, \hat{\lambda}_{E_s, v}(P_s) - \log|f(s)|_v) \\
		&=& \frac{1}{m}  \; \frac{1}{| \Gal(\Kbar/K)\cdot t|}  \sum_{s \in \Gal(\Kbar/K)\cdot t} \;
			\sum_{v \in M_K} \; m\, n_v \, \hat{\lambda}_{E_s, v}(P_s) \\
		&=& \hat{h}_{E_t}(P_t).
\end{eqnarray*}
where the second equality follows from the product formula.  

For $t_0\in \supp D_E(P)$ such that $E_{t_0}$ is smooth, it is necessarily the case that $P_{t_0} = O_{t_0}$, and therefore $\hat{h}_{E_{t_0}}(P_{t_0}) = 0$.  To compute $h_P(t_0)$, observe that $t_0 \in B(K)$ so its Galois orbit is trivial; fixing a uniformizer $u\in K(B)$ at $t_0$, we have 
	$$h_P(t_0) = \sum_{v\in M_K}  n_v \, V_{P, t_0, v}(t_0)$$
where $V_{P,t_0, v}$ is the function of \eqref{variation function} associated to the uniformizer $u$. 

We can compute $h_P(t_0)$ using the dynamical interpretation of the local heights, described in Section \ref{ss:dynamics}.  Fix a Weierstrass equation for $E$ in a neighborhood of $t_0$ and write $P=(x_P,y_P)$. The assumption that $P_{t_0}=O_{t_0}$ is equivalent to $\ord_{t_0}x_P<0$.  After possibly shrinking $U$, write $x_P=(u)^{\ord_{t_0}(x_P)}A_0$ for the chosen uniformizer $u$ and a function $A_0\in K(B)$ that does not vanish in $U$. We choose a lift $X$ of $x_P$ on $U$ defined as $X=(A_0,B_0)$, where $B_0:=(u)^{-\ord_{t_0}(x_P)}$. Notice that $A_0$ and $B_0$ are regular at $t_0$.  Let $F$ be the standard lift in these coordinates, defined in \eqref{standard lift}; it satisfies $F_{t_0}(1,0) = (1,0)$, and we have $\mcG_{F,\ord_{t_0}}(A_0,B_0)=0$.  Since $\ord_{t_0}\Delta_E=0$, Proposition \ref{lambda and G} implies that
	$$V_{P,t_0,v}(t)=\frac12 \, \mcG_{F_t,v}(A_0(t),B_0(t))-\frac{1}{12}\, \log|\Delta_E(t)|_v$$
for all $t \in U$.  Therefore, 
\begin{eqnarray*}
V_{P, t_0, v}(t_0) &=& \frac12 \, \mcG_{F_{t_0},v}(A_0(t_0),0) -\frac{1}{12}\, \log|\Delta_E(t_0)|_v \\
	&=& \frac12 \, \lim_{n\to\infty}\frac{1}{4^n}\log \|F^n_{t_0}(A_0(t_0),0)\|_v - \frac{1}{12}\, \log|\Delta_E(t_0)|_v \\
	&=& \frac12 \, \lim_{n\to\infty}\frac{1}{4^n}\log |A_0(t_0)^{4^n}|_v - \frac{1}{12}\, \log|\Delta_E(t_0)|_v \\
	&=& \frac12 \, \log|A_0(t_0)|_v - \frac{1}{12} \, \log|\Delta_E(t_0)|_v.
\end{eqnarray*}
The product formula now yields that $h_P(t_0)=0$, as claimed.
\qed

\medskip
To prove Proposition \ref{height is nonnegative}, we first reduce to the case that the elliptic surface $E\to B$ has semi-stable reduction; that is all of its fibers are either smooth or have multiplicative reduction.  The next lemma describes how the height associated with the divisor $D_E(P)$ behaves under base extensions of the elliptic surface $E\to B$. It is adapted from \cite[Reduction Lemma \Rmnum{2}.2.1]{Silverman:VCHII}. We include it here for completeness.

\begin{lemma}\label{reductionlemma}
Let $\mu: B'\to B$ be a finite map of smooth projective curves, let $E'\to B'$ be a minimal model for $E\times_B B'$, and let $P':B'\to E'$ be the extension of the section $P$. For each $t_0 \in B(\Kbar)$ and $t_0'\in \mu^{-1}(\{t_0\})\subset B'(\bC_v)$, there is a neighborhood $U$ of $t_0'$ in $B'(\bC_v)$ and a regular non-vanishing function $f$ on $U$ such that 
$$V_{P,t_0,v}(\mu(t'))-V_{P',t_0',v}(t')=\log|f(t')|_v$$
on $U\setminus\{t'_0\}$. 
In particular,
$$V_{P,t_0,v}(t_0)-V_{P',t_0',v}(t'_0)=\log|f(t'_0)|_v.$$
\end{lemma}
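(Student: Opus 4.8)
The plan is to unwind the definitions of the variation functions on both sides and show their difference is a local norm of an algebraic function. Recall that by \eqref{variation function}, we have $V_{P,t_0,v}(t) = \hat\lambda_{E_t,v}(P_t) + \hat\lambda_{E,\ord_{t_0}}(P)\log|u(t)|_v$ where $u$ is a uniformizer at $t_0$ in $k=\Kbar(B)$, and similarly $V_{P',t_0',v}(t') = \hat\lambda_{E'_{t'},v}(P'_{t'}) + \hat\lambda_{E',\ord_{t_0'}}(P')\log|u'(t')|_v$ for a uniformizer $u'$ at $t_0'$ in $\Kbar(B')$. The key point is that over a non-singular fiber the two elliptic curves $E_{\mu(t')}$ and $E'_{t'}$ are \emph{the same} elliptic curve (the fiber of $E\times_B B'$ equals the fiber of its minimal model away from the finitely many bad points, and $P'_{t'}=P_{\mu(t')}$ under this identification), so $\hat\lambda_{E_{\mu(t')},v}(P_{\mu(t')}) = \hat\lambda_{E'_{t'},v}(P'_{t'})$ for $t'$ in a punctured neighborhood of $t_0'$ avoiding the singular locus. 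Hence on $U\setminus\{t_0'\}$,
\[
V_{P,t_0,v}(\mu(t')) - V_{P',t_0',v}(t') = \hat\lambda_{E,\ord_{t_0}}(P)\,\log|u(\mu(t'))|_v - \hat\lambda_{E',\ord_{t_0'}}(P')\,\log|u'(t')|_v.
\]
This reduces the lemma to an assertion purely about the divisors $D_E(P)$ and $D_{E'}(P')$, with no analytic input beyond this identification.

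Next I would identify the algebraic function $f$. Let $e = e_{t_0'}$ denote the ramification index of $\mu$ at $t_0'$, so that $u\circ\mu$ vanishes to order $e$ at $t_0'$ and $u\circ\mu = (u')^{e}\cdot w$ for some $w\in \Kbar(B')$ regular and non-vanishing at $t_0'$. Substituting, the right-hand side above becomes
\[
\hat\lambda_{E,\ord_{t_0}}(P)\big(e\log|u'(t')|_v + \log|w(t')|_v\big) - \hat\lambda_{E',\ord_{t_0'}}(P')\log|u'(t')|_v.
\]
So the claim $V_{P,t_0,v}(\mu(t')) - V_{P',t_0',v}(t') = \log|f(t')|_v$ with $f$ regular and non-vanishing at $t_0'$ is equivalent to the numerical identity
\[
\hat\lambda_{E',\ord_{t_0'}}(P') = e_{t_0'}\cdot \hat\lambda_{E,\ord_{t_0}}(P),
\]
after which one takes $f = w^{\hat\lambda_{E,\ord_{t_0}}(P)}$ (a rational power, legitimate on a small enough $U$ where $w$ is a non-vanishing unit, or after clearing denominators as in \S\ref{metric definition}; note $f$ need not be a single-valued algebraic function globally but is so locally up to the same ambiguity already used in defining the metric). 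This numerical identity is exactly the statement that local canonical heights transform by the ramification index under finite base change, which is the content of \cite[Reduction Lemma \Rmnum{2}.2.1]{Silverman:VCHII}; it follows from functoriality of the N\'eron local height with respect to the natural map $E'\to E$ over $B'\to B$, together with the fact that the place $\ord_{t_0'}$ of $\Kbar(B')$ restricts to $\ord_{t_0}$ on $\Kbar(B)$ with ramification index $e_{t_0'}$.

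Finally, the ``in particular'' clause is immediate: both $V_{P,t_0,v}\circ\mu$ and $V_{P',t_0',v}$ extend continuously across $t_0'$ by Theorem \ref{Silverman continuity} (equivalently Theorem \ref{subharmonic}), and $\log|f|_v$ extends continuously and is finite and nonzero at $t_0'$ since $f$ is a regular non-vanishing function there; so the identity on $U\setminus\{t_0'\}$ passes to the limit, giving $V_{P,t_0,v}(t_0) - V_{P',t_0',v}(t_0') = \log|f(t_0')|_v$.

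The main obstacle is making the base-change transformation law for the local heights precise with the correct normalization --- i.e., verifying $\hat\lambda_{E',\ord_{t_0'}}(P') = e_{t_0'}\,\hat\lambda_{E,\ord_{t_0}}(P)$ and, relatedly, checking that passing to a \emph{minimal model} $E'$ (rather than $E\times_B B'$ itself, which may be singular at $t_0'$) does not alter the comparison on the smooth punctured neighborhood. Both are standard and are exactly what Silverman's Reduction Lemma supplies, so I would cite that and only sketch the ramification bookkeeping; once the numerical identity is in hand, everything else is formal.
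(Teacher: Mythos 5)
Your proposal is correct and follows essentially the same route as the paper's proof: the same two ingredients (the identity $\hat\lambda_{E',\ord_{t_0'}}(P')=e\,\hat\lambda_{E,\ord_{t_0}}(P)$ from Silverman's Reduction Lemma, and the fiberwise isomorphism $E'_{t'}\cong E_{\mu(t')}$ giving equality of the $v$-adic local heights on the punctured neighborhood), the same choice $f=\bigl(u\circ\mu/(u')^{e}\bigr)^{\hat\lambda_{E,\ord_{t_0}}(P)}$, and the same appeal to continuity for the value at $t_0'$.
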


\begin{proof}
Let $u$ be a uniformizer at $t_0$, $u'$ a uniformizer at $t_0'$ and $n=\ord_{t'_0}(\mu^{*}u)$. Since local heights are invariant under base extension we have 
\begin{align}\label{generic:invariance}
\hat{\lambda}_{E', \ord_{t'_0}}(P')=n\, \hat{\lambda}_{E, \ord_{t_0}}(P).
\end{align}
Notice that for all $t'$ in a punctured neighborhood of $t'_0$ the fibers $E'_{t'}$ are smooth. Hence the map $E'\to E$ gives an isomorphism between the fibers $E'_{t'}\to E_{\mu(t')}$. Under this isomorphism $P'_{t'}\in E'_{t'}$ is mapped to $P_{\mu(t')}\in E_{\mu(t')}$. Invoking now the uniqueness of the N\'eron local heights, we have 
\begin{align}\label{special:isomorphism}
\hat{\lambda}_{E_{\mu(t')},v}(P_{\mu(t')})=\hat{\lambda}_{E'_{t'},v}(P'_{t'}).
\end{align}
Combining \eqref{generic:invariance} and \eqref{special:isomorphism} we get that for $t'$ in a punctured neighborhood of $t_0'$, 
$$V_{P,t_0,v}(\mu(t'))=V_{P',t_0',v}(t')+\hat{\lambda}_{E,\ord_{t_0}}(P)\log\left|\frac{u(\mu(t'))}{u'^n(t')}\right|_v.$$
The definition of $n$ yields that the function $f(t')=\left(\frac{u(\mu(t'))}{u'^n(t')}\right)^{\hat{\lambda}_{E, \ord_{t_0}}(P)}$ is regular and non-vanishing at $t_0'$. The first part of the lemma follows. 

Finally, Theorem \ref{Silverman continuity} allows us to conclude that 
$$V_{P,t_0,v}(\mu(t'_0))-V_{P',t_0',v}(t'_0)=\log|f(t'_0)|_v$$
at the point $t'_0$, as claimed.  
\end{proof}

The following lemma will allow us to prove Proposition \ref{height is nonnegative} in the case that a fiber has multiplicative reduction. The proof is lengthy, but it is merely a collection of computations using the explicit formulas for the local height functions, as in \cite[Theorem \Rmnum{6}.3.4, \Rmnum{6}.4.2]{Silverman:Advanced}.

\begin{lemma} \label{bad fibers}  
Let $E\to B$ be an elliptic surface and let $P:B\to E$ be a non-zero section defined over $K$. Then there exists a finite extension $L$ of the number field $K$ so that, for each $t_0\in B(\Kbar)$ such that $E_{t_0}$ has multiplicative reduction, there exists an $x(t_0) \in L^*$ so that 
	$$V_{P,t_0,v}(t_0) = \log|x(t_0)|_v$$
at all places $v$ of $L$.  
\end{lemma}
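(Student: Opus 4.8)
The plan is to reduce to a completely explicit local computation using Silverman's (or Tate's) closed-form expressions for the N\'eron local height in the case of multiplicative reduction. First I would enlarge $K$ to a finite extension $L$ so that every fiber $E_{t_0}$ with multiplicative reduction over $\Kbar$ is already defined over $L$, and so that (using semistable reduction, after a further base change if needed, controlled via Lemma \ref{reductionlemma}) one may assume $E\to B$ has semistable reduction; note that Lemma \ref{reductionlemma} shows the difference $V_{P,t_0,v}(t_0)-V_{P',t_0',v}(t_0')$ is $\log|f(t_0')|_v$ for a fixed $f\in L(B')^*$, which is of exactly the asserted form, so it suffices to prove the statement after such a base change. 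Next, at a point $t_0$ of multiplicative reduction, I would fix a minimal Weierstrass model for $E$ near $t_0$ and use Tate's parametrization: on a $v$-adic (or Berkovich) neighborhood of $t_0$ the curve $E_t$ is analytically uniformized by $\bC_v^*/q(t)^{\bZ}$ with $\mathrm{ord}_{t_0} q = -\mathrm{ord}_{t_0}\Delta_E =: N > 0$, and the section $P_t$ corresponds to some analytic parameter $w(t)\in\bC_v^*$.

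The key step is then to substitute the Tate-curve formula for the local height. Writing $w = q^{\,\beta}\cdot(\text{unit})$ near $t_0$ with $\beta\in\bQ$ the ``reduction type'' parameter (so $\beta = \hat{\lambda}_{E,\mathrm{ord}_{t_0}}(P)/(\text{something explicit in }N)$, matching the known formula $\hat{\lambda}_{E,\mathrm{ord}_{t_0}}(P) = B_2(\beta)\cdot\tfrac{N}{2}$ or the analogous normalization in \cite[VI.3.4, VI.4.2]{Silverman:Advanced}), the local height $\hat{\lambda}_{E_t,v}(P_t)$ is given by an explicit expression involving $\log|w(t)|_v$, $\log|q(t)|_v$, the second Bernoulli polynomial $B_2$, and a convergent infinite sum $\sum_{n\ge 1}\log|(1-q^n w)(1-q^n w^{-1})|_v$ plus the $\tfrac1{12}\log|\Delta|_v$-type correction coming from Proposition \ref{lambda and G}. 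The point is that $V_{P,t_0,v}(t)$ is, by definition, $\hat{\lambda}_{E_t,v}(P_t) + \hat{\lambda}_{E,\mathrm{ord}_{t_0}}(P)\log|u(t)|_v$; the two terms that blow up as $t\to t_0$ — namely the $B_2$-in-$\log|q|$ term and the $\log|u|$ term — cancel exactly by the choice of $\beta$, the infinite sum tends to $0$ (each factor $1-q^n w\to 1$), and what survives at $t_0$ is a finite combination of values at $t_0$ of regular functions on $B$ (the units appearing in $q$, $w$, $\Delta$, and the leading coefficients of the Weierstrass data). Since all of these are elements of $L^*$ (after our enlargement of $K$), the limiting value $V_{P,t_0,v}(t_0)$ equals $\log|x(t_0)|_v$ for a single $x(t_0)\in L^*$ \emph{independent of $v$}, which is precisely the claim. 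Continuity of $V_{P,t_0,v}$ at $t_0$ (Theorem \ref{Silverman continuity}) guarantees that this pointwise value is the right one and that the limit computation is legitimate.

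The main obstacle is bookkeeping: one must carry the Tate-curve expansion with enough precision to see that \emph{every} surviving term is the $v$-adic absolute value of an element of a single fixed number field $L$ — in particular that $\beta$ is rational with bounded denominator (so $w/q^\beta$, raised to a suitable integer power, is a genuine $L(B)$-function regular and nonvanishing at $t_0$), and that the tail sum vanishes uniformly in $v$. Handling $\beta$ correctly requires distinguishing split versus non-split multiplicative reduction, which is the reason a finite extension $L$ is genuinely needed; after base-changing to make the reduction split (again via Lemma \ref{reductionlemma}) this subtlety disappears. None of the individual computations is deep — they are the standard explicit formulas of \cite[VI.3, VI.4]{Silverman:Advanced} — so the proof is a matter of organizing them so that the cancellation of divergent parts and the algebraicity of the finite part are both manifest.
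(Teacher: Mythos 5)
Your proposal follows essentially the same route as the paper: Tate uniformization near the multiplicatively degenerate fiber, the explicit Bernoulli-polynomial formula for the local height, cancellation of the divergent terms against $\hat{\lambda}_{E,\ord_{t_0}}(P)\log|u(t)|_v$, vanishing of the tail sum, and identification of the surviving constant as $\log|x(t_0)|_v$ with $x(t_0)$ built from the leading coefficients of the expansions of $\psi(t)$ (the Tate parameter $q$), $w(t)$, and $1-w(t)$ in the uniformizer. The one point you defer as bookkeeping but which requires an actual argument --- that $w(t)$ has coefficients algebraic over $K$ and independent of the place --- is settled in the paper by inverting the relation $\wp'(w(t),\psi(t))=2\nu(t)^{-3}y_P(t)$, which exhibits $w(t)$ as an element of $K[[u]]$.
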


\begin{proof}
We let
\begin{align}\label{weierstrasseq}
E:y^2=x^3+ax+b,
\end{align}
be a minimal Weierstrass equation for $E$ over an affine subset $W\subset B$ defined over $K$ with $t_0\in W$. Here $a,b\in K(B)$ are regular functions at $t_0$. 
Using this Weierstrass equation we write 
$$P=(x_P,y_P),$$
where $x_P,y_P\in K(B)$.
Since $E\to B$ has multiplicative reduction over $t_0\in B(K)$, we have 
\begin{align}\label{multiplicativered}
N:=\ord_{t_0}\Delta_E\ge 1\text{ and }\min\{\ord_{t_0}a,\ord_{t_0}b\}=0.
\end{align}
Let $v$ be a place of $K$ (archimedean or non-archimedean). We denote by $j_E$ the $j-$invariant of $E\to W$, given by $$j_E(t)=1728\frac{(4a(t))^3}{\Delta_E(t)}.$$ 
Notice that equation \eqref{multiplicativered} yields that $j_E$ has a pole at $t_0$. Hence, we can find a $v-$adic open neighborhood $U$ of $t_0$ and an analytic map 
\begin{align*}
\psi: U\to\{q\in\bC_v~:~|q|_v<1\},
\end{align*}
such that the following holds: If $j$ is the modular $j-$invariant \cite[Chapter \Rmnum{5}]{Silverman:Advanced}, then we have
\begin{align*}
j_E(t)=j(\psi(t))\text{ and } \ord_{t_0}\psi=N.
\end{align*}
The function $\psi(t)$ is given as 
\begin{align}\label{psiwithj}
\psi(t)=\frac{1}{j_E(t)}+\frac{744}{j^2_E(t)}+\frac{750420}{j^3_E(t)}+\ldots \in \mathbb{Z}[[(j_E(t))^{-1} ]].
\end{align}
In the following, we choose a uniformizer $u \in K(B)$ at $t_0$, and we identify $\psi$ with its expression $\psi(t)\in \bC_v[[u]]$ and write 
\begin{align}\label{psi(t)}
\psi(t)=\beta u(t)^{N}+u(t)^{N+1}f(t)\text{, for }t\in U\setminus\{t_0\}.
\end{align}
Equation \eqref{psiwithj} yields that $\beta\in K\setminus\{0\}$ and $f(t)\in K[[u]]$. 
Following the proof of  \cite[Section 6]{Silverman:VCHII} and after possibly shrinking $U$ we have isomorphisms
\begin{align}\label{uni-iso}
 E_t(\bC_v)&\xrightarrow{\sim} \bigslant{\bC_v^{*}}{\psi(t)^{\bZ}}\xrightarrow{\sim} C_{\psi(t)}:y^2=4x^3-g_2(\psi(t))x-g_3(\psi(t))\text{,}
\end{align}
for $t\in U\setminus\{t_0\}$.
Under these isomorphisms, we have
\begin{align*}
 P_t&\mapsto \quad w(t)\quad\mapsto (\wp(w(t),\psi(t)),\wp'(w(t),\psi(t))).
\end{align*}
Here $g_2,g_3$ are the modular invariants, given by their usual $q-$series 
\begin{align*}
g_2(q)=\frac{1}{12}\left(1+240\displaystyle\sum_{n=1}^{\infty}\frac{n^3q^n}{1-q^n}\right)\text{, }g_3(q)=\frac{1}{216}\left(-1+504\displaystyle\sum_{n=1}^{\infty}\frac{n^5q^n}{1-q^n}\right)
\end{align*}
and $\wp$ is the Weierstrass $\wp-$function given by
\begin{align}\label{weierstrassp}
\wp(w,q)=\frac{1}{12}+\sum_{n\in\bZ}\frac{q^nw}{(1-q^nw)^2}-2\sum_{n=1}^{\infty}\frac{nq^n}{1-q^n}\text{, }\wp'(w,q)=\sum_{n\in\bZ}\frac{q^nw(1+q^nw)}{(1-q^nw)^3}.
\end{align}
In view of \cite[Lemma \Rmnum{2}.6.2]{Silverman:VCHII}, after possibly replacing $P$ by $-P$, we may assume that $w:U\to\bC_v$ is an analytic map satisfying
\begin{align}\label{orderofwandq}
0\le\ord_{t_0}w\le\frac{1}{2}\ord_{t_0}\psi.
\end{align}
In the following we identify $w$ with its series in $\bC_v[[u]]$ and write 
\begin{align}\label{w(t)}
w(t)=\al u^m(t)+u^{m+1}(t)g(t),
\end{align}
where $\al\in\bC_v$ and $g(t)\in\bC_v[[u]]$.

We claim that $w(t)\in \overline{K}[[u]]$. To see this, notice that from \cite[\Rmnum{3}]{Silverman:Elliptic} we have that for $t\in U$  
\begin{align*}
(\wp(w(t),\psi(t)),\wp'(w(t),\psi(t))=(\nu^{-2}(t)x_P(t),2\nu^{-3}(t)y_P(t)), 
\end{align*}
where
\begin{align*}
\nu(t)^{12}=\frac{\Delta_E(t)}{\Delta(\psi(t))}.
\end{align*}
In the equation above $\Delta$ denotes the modular discriminant given by
\begin{align*}
\Delta(q)=g_2(q)^3-27g_3(q)^3.
\end{align*}
Since the functions $\psi,\Delta_E$ and $\Delta$ are defined over $K$, we have that $Y(t):=2\nu^{-3}(t)y_P(t)$ is also defined over $K$.  Since $Y(t)=\wp'(w(t),\psi(t))\in K[[u]]$ and $\psi(t)\in K[[u]]$ we get that $w(t)\in K[[u]]$.

Therefore, there are non-zero constants $\al,\beta,\gamma\in\overline{K}\setminus\{0\}$, non-negative integers $k,m\in\bN$ and functions $f(t), g(t), h(t)\in \overline{K}[[u]]$ such that for all $t\in U$ 
\begin{align}\label{abcmNk}
\psi(t)=\beta u^N(t)+f(t)u^{N+1}(t)\text{, }w(t)=\al u^m(t)+g(t)u^{m+1}\text{, }1-w(t)=\gamma u^k(t)+h(t)u^{k+1}(t).
\end{align}

Next, we aim to express $x(t_0)$ (as in the statement of the lemma) in terms of $\al,\beta,\gamma\in\overline{K}$.

Using the isomorphisms in \ref{uni-iso}, the uniqueness of the local canonical heights and the explicit formulas for the local canonical heights \cite[Theorem \Rmnum{6}.3.4, \Rmnum{6}.4.2]{Silverman:Advanced}, we get
\begin{align}\label{bernoulliformula}
\hat{\lambda}_{E_t,v}(P_t)=\hat{\lambda}(w(t),\psi(t))&=-\frac{1}{2}B_2\left(\frac{\log|w(t)|_v}{\log|\psi(t)|_v}\right)\log|\psi(t)|_v-\log|1-w(t)|_v\\
&-\sum_{n\ge 1}\log|(1-\psi(t)^nw(t))(1-\psi(t)^nw(t)^{-1})|_v,
\end{align}
where $B_2(s)=s^2-s+1/6$ is the second Bernoulli polynomial.

Since $\ord_{t_0}\psi=N\ge 1$ and using \eqref{orderofwandq}, we get
\begin{align}\label{bigsumzero}
\displaystyle\lim_{t\stackrel{v}\to t_0}\sum_{n\ge 1}\log|(1-\psi(t)^nw(t))(1-\psi(t)^nw(t)^{-1})|_v=0.
\end{align}
In what follows, for $F(t)\in\bC_v[[u]]$ we write
$$F(t):=o_v(1)\text{, if }\displaystyle\lim_{t\stackrel{v}\to t_0}F(t)=0.$$
In view of \cite[Lemma \Rmnum{1}.5.1]{Silverman:VCHI}, we have
\begin{align}\label{bernoullipart}
B_2\left(\frac{\log|w(t)|_v}{\log|\psi(t)|_v}\right)\log|\psi(t)|_v&=\frac{\log^2|w(t)|_v}{\log|\psi(t)|_v}-\log|w(t)|_v+\frac{1}{6}\log|\psi(t)|_v\\\nonumber
&=\frac{m^2}{N}\log|u(t)|_v+\frac{m}{N^2}\log\left(\frac{|\al|_v^{2N}}{|\beta|_v^{m}}\right)-\log|\al|_v\\\nonumber
&-m\log|u(t)|_v+\frac{\log|\beta|_v}{6}+\frac{N}{6}\log|u(t)|_v+o_v(1)
\end{align}
Using equations  \eqref{bigsumzero} and \eqref{bernoullipart}, equation \eqref{bernoulliformula} yields
\begin{align}\label{toproductformula}
\hat{\lambda}_{E_t,v}(P_t)+\frac{1}{2}\left(\frac{m^2}{N}-m+\frac{N}{6}+2k\right)\log|u(t)|_v&=-\frac{1}{2}\left(\frac{m}{N^2}\log\left(\frac{|\al|_v^{2N}}{|\beta|_v^{m}}\right)-\log|\al|_v+\frac{\log|\beta|_v}{6} \right)\\
&-\log|\gamma|_v+o_v(1).
\end{align}
Finally, notice that \cite[Theorem \Rmnum{6}.4.2]{Silverman:Advanced} implies
$$\hat{\lambda}_{E,\ord_{t_0}}(P)=\ord_{t_0}(1-w)+\frac{1}{2}B_2\left(\frac{\ord_{t_0} w}{\ord_{t_0} \psi}\right)\ord_{t_0}\psi=\frac{1}{2}\left(\frac{m^2}{N}-m+\frac{N}{6}+2k\right).$$
Therefore
\begin{align*}
V_{P,t_0,v}(t_0)&=\displaystyle\lim_{t\stackrel{v}\to t_0}V_{P,t_0,v}(t)=-\frac{1}{2}\left(\frac{m}{N^2}\log\left(\frac{|\al|_v^{2N}}{|\beta|_v^{m}}\right)-\log|\al|_v+\frac{\log|\beta|_v}{6} \right)-\log|\gamma|_v\\
&=\log|x(t_0)|_v,
\end{align*}
where $x(t_0)=\frac{\beta^{m^2/2N^2-1/2}}{\gamma\al^{m/N-1/2}}$ belongs to a finite extension of $K$, denoted by $L$.

\end{proof}

\medskip
\proof[Proof of Proposition \ref{height is nonnegative}]
By \cite[Lemma \Rmnum{2}.2.2]{Silverman:VCHII} there is a finite map of smooth projective curves $B'\to B$ such that if $E'\to B'$ is a minimal model for $E\times_{B}B'$, then $E'$ has semi-stable reduction over the singular fibers of $E\to B$. Moreover, we may choose $B'$ so that everything is defined over $K$. Thus, by Lemma \ref{reductionlemma} and using the product formula, we may assume that the singular fibers of our elliptic surface $E \to B$ have multiplicative reduction.  

For all $t \in B(\Kbar)$ for which $E_t$ is smooth, we know from Proposition \ref{heightvsheight} that $h_P(t) = \hat{h}_{E_t}(P_t)$.  The canonical height is always non-negative, so we may conclude that $h_P(t)\geq 0$ for all such $t$.  

Assume now that $t_0 \in B(\Kbar)$ has a fiber with multiplicative reduction.  Enlarging the number field $K$ if necessary we may assume that $t_0\in B(K)$ and that its corresponding $x(t_0)$ defined in the statement of Lemma \ref{bad fibers} is in $K^{*}$. Then, on using the product formula, Lemma \ref{bad fibers} implies that $h_P(t_0) = 0$.  This completes the proof.  
\qed

\subsection{Proofs of Theorem \ref{good height} and Corollary \ref{equidistribution on B}}

\proof[Proof of Theorem \ref{good height}]
Let $\cL_P$ be the line bundle on $B$ induced from the divisor $D_E(P)$.  From Theorem \ref{metric properties}, we know that its $m$-th tensor power can be equipped with a continuous, adelic, semipositive metric, so that the corresponding height function is (a multiple of) the canonical height $\hat{h}_{E_t}(P_t)$ on the smooth fibers.  Thus, by pulling back the metric to $\cL_P$, we obtain a continuous, semipositive, adelic metric on $\cL_P$ inducing the desired height function.  

It remains to show that this height $h_P$ satisfies $h_P(B) = 0$.  This is a consequence of Propositions \ref{heightvsheight} and \ref{height is nonnegative} and Zhang's inequalities on successive minima \cite{Zhang:adelic}.  Recall that, since $\hat{h}_E(P) \not=0$, we know that there are infinitely many $t\in B(\Kbar)$ for which 
	$$\hat{h}_{E_t}(P_t) = 0.$$
(For a complex-dynamical proof, see \cite[Proposition 1.5, Proposition 2.3]{D:stableheight}.)  Therefore, from Proposition \ref{heightvsheight}, we may deduce that the essential minimum of $h_P$ on $B$ is equal to 0.  On the other hand, from Proposition \ref{height is nonnegative}, we know that $h_P(t) \geq 0$ for all $t \in B(\Kbar)$.  Therefore, from \cite[Theorem 1.10]{Zhang:adelic}, we may conclude that $h_P(B) = 0$. 
\qed

\proof[Proof of Corollary \ref{equidistribution on B}]
When combined with the equidistribution theorems of Yuan and Thullier \cite{Yuan:equidistribution, Thuillier:these}, we immediately obtain the corollary from Theorem \ref{good height}.  The measures $\mu_{P,v}$ are the curvature distributions associated to the metrics $\|\cdot\|_v$ at each place $v$.  From the definition of the metric in \S\ref{metric definition}, we see that they are given locally by 
	$$\mu_{P,v} = dd^c V_{P,t_0, v}(t)$$
in a $v$-adic neighborhood of any point $t_0\in B(\Kbar)$, and for any choice of uniformizer $u$ at $t_0$.  
\qed

\bigskip
\section{Proof of Theorem \ref{Zhang language}.}

\subsection{Reduction to the case of a fiber product of elliptic surfaces} \label{product reduction}

We first show that, to prove the theorem, it suffices to prove the result for sections of the fiber product $A = E_1 \times_B\cdots\times_BE_m$ of $m\ge 2$ elliptic surfaces $E_i\to B$ over the same base, and to assume that the line bundle $\cL$ is generated by the divisor $$\{O_{E_1}\}\times E_2\times\cdots E_m+E_1\times\{O_{E_2}\}\times\cdots\times E_m+\cdots + E_1\times E_2\times\cdots \times\{O_{E_m}\}.$$

Let $B$ be a quasiprojective smooth algebraic curve defined over $\Qbar$.  Suppose $A \to  B$ is family of abelian varieties defined over $\Qbar$ that is isogenous to a fibered product of $m\ge 2$ elliptic curves. That is, there is a branched cover $B' \to B$ and $m\ge2$ elliptic surfaces $E_i \to B'$  that give rise to an isogeny 
	$$E_1\times_{B'}\cdots\times_{B'} E_m \to A$$
over $B'$.  
Now let $\cL$ be a line bundle on $A$ which restricts to an ample and symmetric line bundle on each fiber $A_t$ for $t\in B$. Then the line bundle $\cL$ pulls back to a line bundle $\cL'$ on $E_1\times_{B'}\cdots\times_{B'} E_m$, and it again restricts to an ample and symmetric line bundle on each fiber over $t \in B'$. 

Now suppose that we have a section $P: B \to A$.  The section $P$ pulls back to a section $P': B' \to A$, and this in turn pulls back to a (possibly multi-valued) section of $E_1\times_{B'}\cdots\times_{B'} E_m$.  If multi-valued, we can perform a base change again, passing to a branched cover $B'' \to B'$, so that the induced section $P'': B'' \to E_1\times_{B''}\cdots\times_{B''} E_m$ is well defined.  By definition, the assumption that $P$ is non-special on $A$ means that it is non-special as a section of $E_1\times_{B''}\cdots\times_{B''} E_m$.  

Finally, we observe that the conclusion of Theorem \ref{Zhang language} does not depend on the choice of line bundle.  (We thank Joe Silverman for his help with this argument.)  Recall that, on any abelian variety $A$ defined over $\Qbar$, the notion of a ``small sequence" of points is independent of the choice of ample and symmetric line bundle.  That is, if we take two ample and symmetric divisors $D_1$ and $D_2$, then we know that there exists an integer $m_1>0$ so that $m_1 D_1 - D_2$ is ample; similarly there exists $m_2>0$ so that $m_2 D_2 - D_1$ is ample.  It follows from properties of the Weil height machine that the heights $h_{D_1}$ and $h_{D_2}$ will then satisfy
	$$\frac{1}{m_1} h_{D_2} + C_1 \leq h_{D_1} \leq m_2 \, h_{D_2} + C_2$$
for real constants $C_1, C_2$.   Upon passing to the canonical height, we conclude that 
\begin{equation} \label{comparable heights}
	\frac{1}{m_1} \hat{h}_{D_2} \leq \hat{h}_{D_1} \leq m_2 \, \hat{h}_{D_2}
\end{equation}
on the abelian variety.  In particular, $\hat{h}_{D_1}(a_i) \to 0$ for some sequence in $A(\Qbar)$ if and only if $\hat{h}_{D_2}(a_i) \to 0$.  Now suppose we have a family of abelian varieties $A \to B$.  Two line bundles $\cL_1$ and $\cL_2$ associated to relatively ample and symmetric divisors induce a canonical heights $\hat{h}_{\cL_{1,t}}$ and $\hat{h}_{\cL_{2,t}}$ on each fiber $A_t$.  But recalling that amplitude persists on Zariski open sets \cite[Theorem 1.2.17]{Lazarsfeld:Positivity:I}, there exist positive integers $m_1$ and $m_2$ so that the line bundles $\cL_1^{m_1}\otimes\cL_2^{-1}$ and $\cL_2^{m_2}\otimes \cL_1^{-1}$ are relatively ample on a Zariski open subset of the base $B$.  Passing to the canonical heights once again, we find that the relation \eqref{comparable heights} holds uniformly over $B$ (after possibly excluding finitely many points).  Therefore, for any section $P: B \to A$, there exists a positive constant $c(\cL_1, P)$ of Theorem \ref{Zhang language} for height $\hat{h}_{\cL_1}$ if and only if it there exists such a constant $c(\cL_2, P)$ for $\hat{h}_{\cL_2}$.  

\subsection{Proof for a fiber product of elliptic curves}
Fix integer $m\geq 2$, and let $E_i \to B$ for $i=1,\ldots,m$ be elliptic surfaces over the same base curve $B$, defined over $\Qbar$.  Let $A = E_1\times_B\cdots\times_B E_m$, and let $\cL$ be the line bundle on $E_1\times_B \cdots\times_B E_m$ associated to the divisor 
$$D = \{O_{E_1}\}\times E_2\times\cdots E_m+E_1\times\{O_{E_2}\}\times\cdots\times E_m+\cdots + E_1\times E_2\times\cdots \times\{O_{E_m}\}.$$
For all but finitely many $t \in B(\Qbar)$, the canonical height $\hat{h}_{\cL_t}$ on the fiber $A_t$ is easily seen to be the sum of canonical heights  (see, e.g., \cite{Hindry:Silverman} for properties of the height functions), so that
	$$\hat{h}_{\cL_t} = \displaystyle\sum_{i=1}^m\hat{h}_{E_{i,t}}.$$
	
Now assume that $P = (P_1, \ldots, P_m)$ is a section of $A\to B$.  Define
	$$\hat{h}_i(t) := \hat{h}_{E_{i,t}}(P_i(t))$$
for $i = 1,\ldots, m$ and for all $t \in B(\Qbar)$ where all $E_{i,t}$ are smooth elliptic curves.  Suppose there exists an infinite sequence $\{t_n\} \subset B(\Qbar)$ for which
\begin{equation}  \label{simultaneously small}
	\hat{h}_i(t_n) \to 0 \text{ for all }i=1,\ldots,m.
\end{equation}
as $n\to \infty$.  We will prove that for every pair $(i,j)$, there exists an infinite sequence $\{s_n\} \subset B(\Qbar)$ so that 
	$$\hat{h}_i(s_n) = \hat{h}_j(s_n) = 0$$
for all $n$.  In this way, we reduce our problem to the main results of \cite{Masser:Zannier:2, Masser:Zannier:nonsimple} which imply that the pair $(P_i, P_j)$ must be a special section of $E_i \times_B E_j$.  Finally, we observe that our definition of a special section $P = (P_1, P_2, \ldots, P_m)$ is equivalent to the statement that every pair $(P_i, P_j)$ is special.  Therefore, for any non-special section $P$, we can conclude that there exists a constant $c = c(P) >0$ so that the set
	$$\{t \in B(\Qbar): \hat{h}_{\cL_t}(P_t) < c\}$$ 
is finite.

Fix a pair $(i,j)$. First assume that neither $E_i$ nor $E_j$ is isotrivial.  If $P_i$ or $P_j$ is torsion, then the section $(P_i, P_j)$ is special.  Otherwise, we have $\hat{h}_{E_i}(P_i) \not=0$ and $\hat{h}_{E_j}(P_j) \not=0$, and we may apply Theorem \ref{good height} to deduce that the height functions $h_i$ and $h_j$ are ``good" on $B$.  More precisely, we let $M_i$ and $M_j$ be the adelically metrized line bundles on the base curve $B$ associated to the height functions $\hat{h}_i$ and $\hat{h}_j$, from Theorem \ref{good height}.  They are both equipped with continuous adelic metrics of non-negative curvature.  By assumption, we have 
\begin{equation}  \label{two are small}
	\hat{h}_i(t_n) \to 0 \quad \mbox{and} \quad \hat{h}_j(t_n) \to 0
\end{equation}
as $n\to \infty$.  Therefore, we may apply the observation of Chambert-Loir \cite[Proposition 3.4.2]{ChambertLoir:survey}, which builds upon on Zhang's inequalities \cite{Zhang:adelic}, to conclude that there exist integers $n_i$ and $n_j$ so that $M_i^{n_i}$ and $M_j^{n_j}$ are isomorphic as line bundles on $B$ and their metrics are scalar multiples of one another.  It follows that the height functions $\hat{h}_i$ and $\hat{h}_j$ are the same, up to scale, and in particular they have the same zero sets.  In other words, $P_i(t)$ is a torsion point on $E_{i,t}$ if and only if $P_j(t)$ is a torsion point on $E_{j,t}$ (for all but finitely many $t$ in $B$), and there are infinitely many such parameters $t\in B(\Qbar)$.

Now suppose that $E_i$ is isotrivial.  The existence of the small sequence $t_n$ in \eqref{two are small} implies that either $\hat{h}_{E_i}(P_i) \not=0$ or $P_i$ is torsion on $E_i$, and furthermore, if $P_i$ is torsion, then it follows that $(P_i, P_j)$ is a special section of $E_i\times_B E_j$.  Similarly if $E_j$ is isotrivial.  In other words, the existence of the sequence $t_n$ in \eqref{two are small} allows us to conclude that either $(P_i,P_j)$ is a special pair, or we have that both $\hat{h}_{E_i}(P_i) \not=0$ and $\hat{h}_{E_j}(P_j) \not=0$.  Therefore, we may proceed as above in the nonisotrivial case, applying Theorem \ref{good height} to deduce that the heights $\hat{h}_i$ and $\hat{h}_j$ coincide, up to scale, and in particular there are infinitely many parameters $s\in B(\Qbar)$ where 
	$$\hat{h}_i(s) = \hat{h}_j(s) = 0.$$
This concludes the proof of Theorem \ref{Zhang language}.

\bigskip
\section{Variation of canonical height, illustrated}
\label{experiment}

In this final section, we provide a few illustrations of the distributions $\mu_{P,v}$ for an archimedean place $v$, arising in Corollary \ref{equidistribution on B}.  In Proposition \ref{dense}, we present a complex-dynamical proof that the archimedean measures $\mu_{P,v}$ will have support equal to all of $B$.

\subsection{Images}  Given $E\to B$ and section $P$, we plot the parameters $t$ where $P_t$ is a torsion point on the fiber $E_t$ of specified order.  As proved in Corollary \ref{equidistribution on B}, the local height function at each place
	$$t \mapsto \hat{\lambda}_{E_t,v}(P_t)$$
determines the distribution of the torsion parameters; it is a potential for the measure $\mu_{P,v}$ (away from the singularities).  Recall that if we have two sections $P$ and $Q$ that are linearly related on $E$, then the distributions of their torsion parameters in $B$ will be the same.

Figure \ref{Example 1}, top, illustrates the example of Silverman from \cite[Theorem I.0.3]{Silverman:VCHI}.  Here, we have  
	$$E_t = \{y^2 + xy/t + y/t = x^3 + 2 x^2/t\}$$ 
with $B = \pr^1$ and $P_t = (0,0)$ in $(x,y)$-coordinates.  Plotted are the torsion parameters of orders $2^n$ for all $n\leq 8$; that is, the points $t$ in the base $B$ where $P_t$ is torsion of order $2^n$ on the fiber $E_t$.  Roughly, a smaller yellow dot corresponds to higher order of torsion.  Figure \ref{Example 1}, bottom, is another section of the same family, where the $x$-coordinate of $P_t$ is constant and equal to $-1/4$.  (Strictly speaking, this second $P$ is not a section of our given $E\to \pr^1$, because the $y$-coordinate will not lie in $\Kbar(B) \simeq \overline{\mathbb{Q}}(t)$ but in an extension; however, the property of being torsion and the determination of its order is independent of which point in the fiber we choose.)  Observe the distinctly different pattern of yellow dots in the first and second pictures, especially in the left half of the two pictures, illustrating the linear independence in $E(k)$ of the two sections.  

Figure \ref{Example 2} illustrates the torsion parameters for two independent sections of the Legendre family, 
	$$E_t = \{y^2 = x(x-1)(x-t)\}$$
over $B = \pr^1$, studied in \cite{Masser:Zannier}.   The chosen sections are $P_2$, with constant $x$-coordinate equal to $2$, and $P_5$, with constant $x$-coordinate equal to 5.  As in Figure \ref{Example 1}, we plot the torsion parameters of orders $2^n$ for all $n\leq 8$; generally, a smaller yellow dot signifies higher order of torsion.  It was proved in \cite{DWY:Lattes} that the limiting distributions for sections with constant $x$-coordinate satisfy $\mu_{P_x,\infty} = \mu_{P_{x'}, \infty}$ (at an archimedean place) if and only if $x=x'$.  It was proved in \cite{Stoll:torsion} and \cite{Mavraki:Weierstrass} that there are no $t \in \pr^1(\Kbar)$ for which both $(P_2)_t$ and $(P_5)_t$ are torsion on $E_t$.   Again, observe the difference in the geometry of the yellow dots for the two independent sections.   

Figure \ref{Example equidistribution} illustrates our equidistribution result, Corollary \ref{equidistribution on B}, for the example of the Legendre family with the section $P_5$. Plotted are the torsion parameters of orders $2^n$ with (a) $n\leq 6$, (b) $n\leq 8$, and (c) $n\leq 10$.  Observe how the yellow dots fill in the ``grid structure" in the base curve $B$, exactly as do the torsion points for one elliptic curve.  

\begin{figure} [h]
\includegraphics[width=4.5in]{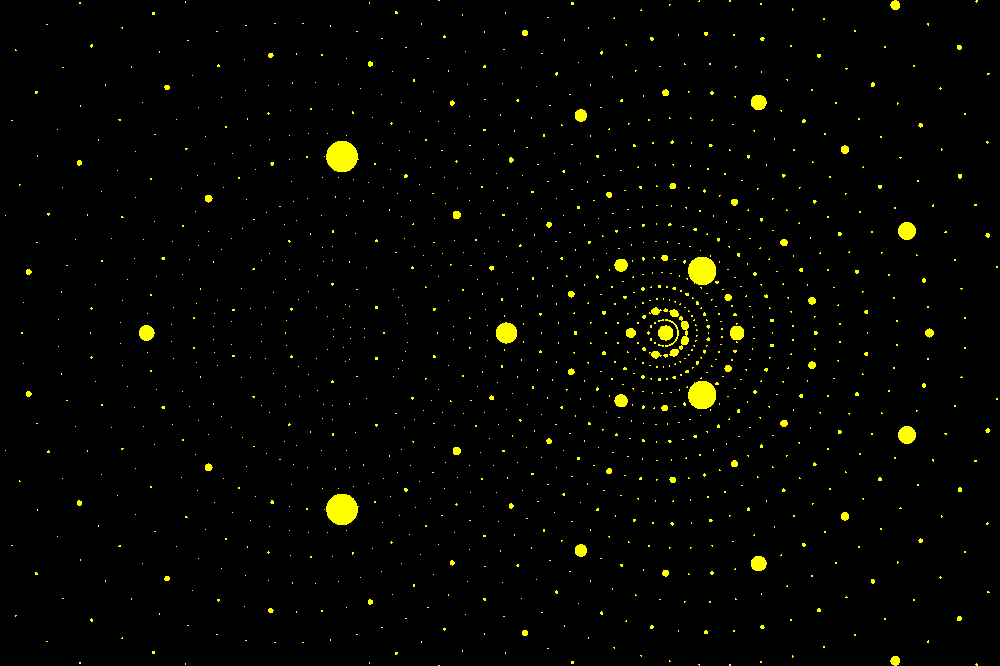}
\includegraphics[width=4.5in]{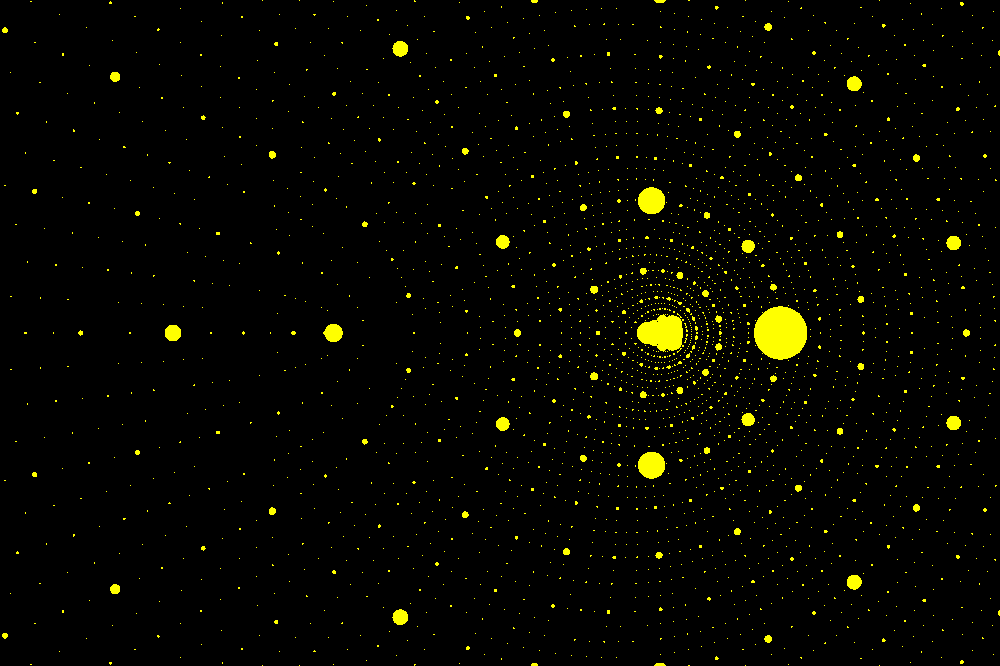}
\caption{ \small At top, Silverman's example from \cite[Theorem I.0.3]{Silverman:VCHI}, with $E_t = \{y^2 + xy/t + y/t = x^3 + 2 x^2/t\}$ and $P_t = (0,0)$, shown in the region $\{-2 \leq \operatorname{Re} t \leq 1, \; -1 \leq \operatorname{Im} t \leq 1\}$.  The singular fibers occur at $t = 0, -2/27, -1$, and one sees the effects of numerical error in a small neighborhood of these parameters.  At bottom, torsion parameters for section $P$ having $x$-coordinate $x(P_t) = -1/4$ for all $t$.}
\label{Example 1}
\end{figure}

\begin{figure} [h]
\includegraphics[width=3in]{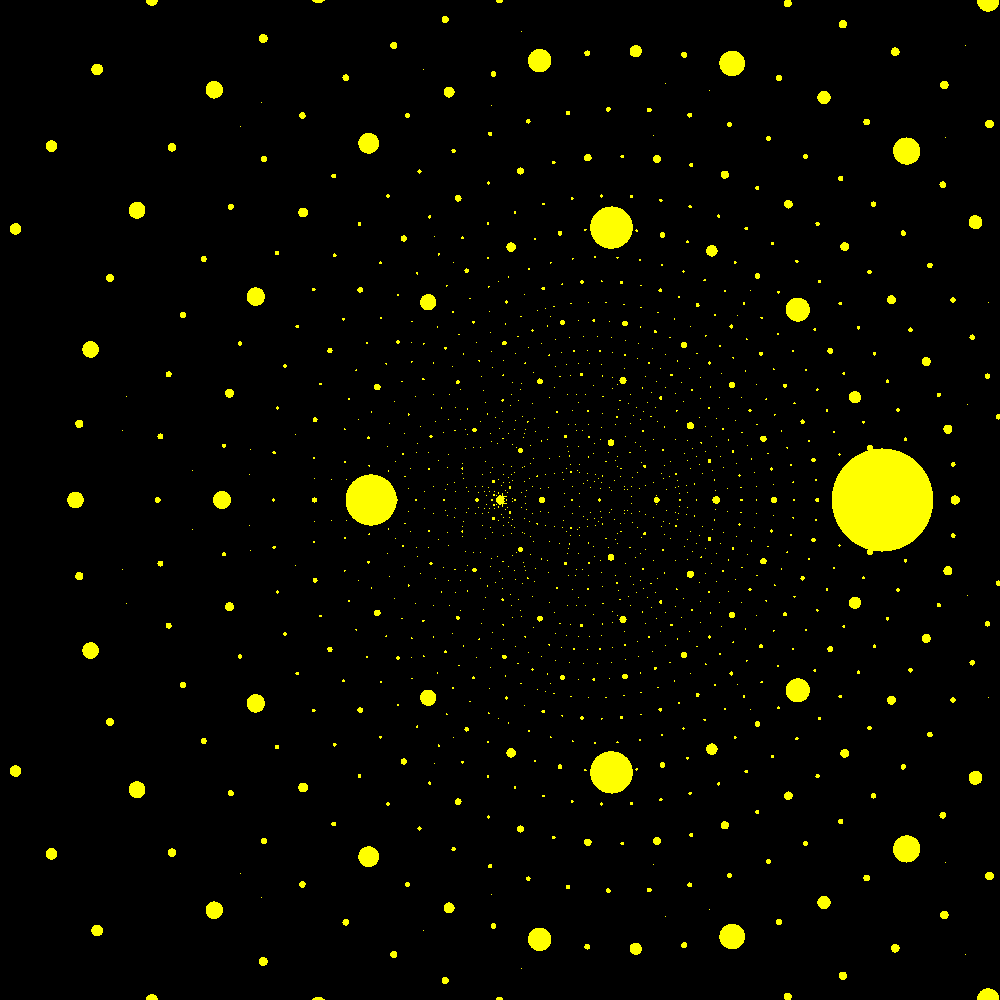} \quad
\includegraphics[width=3in]{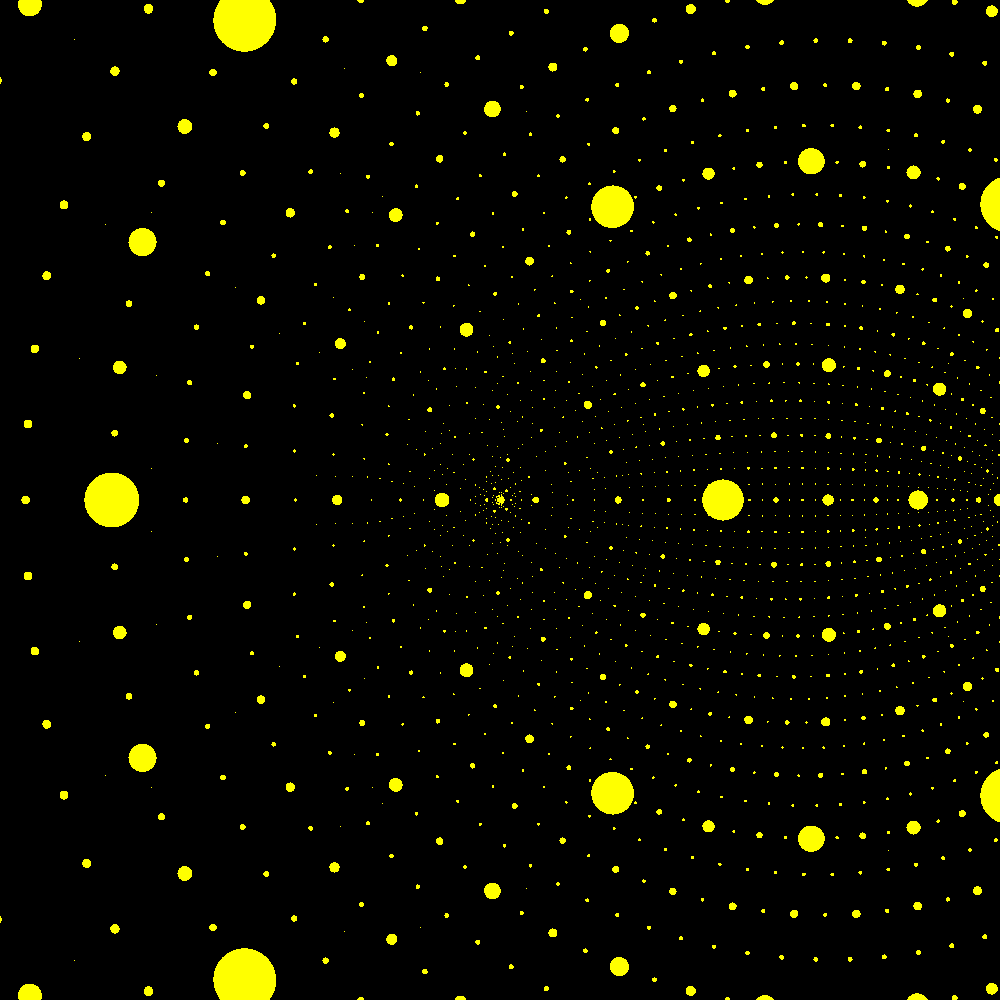} 
\caption{ \small  Torsion parameters for sections of the Legendre family, studied in \cite{Masser:Zannier}; here, $B = \pr^1$ and $E_t = \{y^2 = x(x-1)(x-t)\}$.  At left, the section $P_2$ with $x((P_2)_t) = 2$ for all $t$; at right, the section $P_5$ with $x((P_5)_t) = 5$ for all $t$.  Both are shown in the region $\{-3 \leq \operatorname{Re} t \leq 5, \; -4 \leq \operatorname{Im} t \leq 4\}$     }
\label{Example 2}
\end{figure}

\begin{figure} [h]
\includegraphics[width=3.0in]{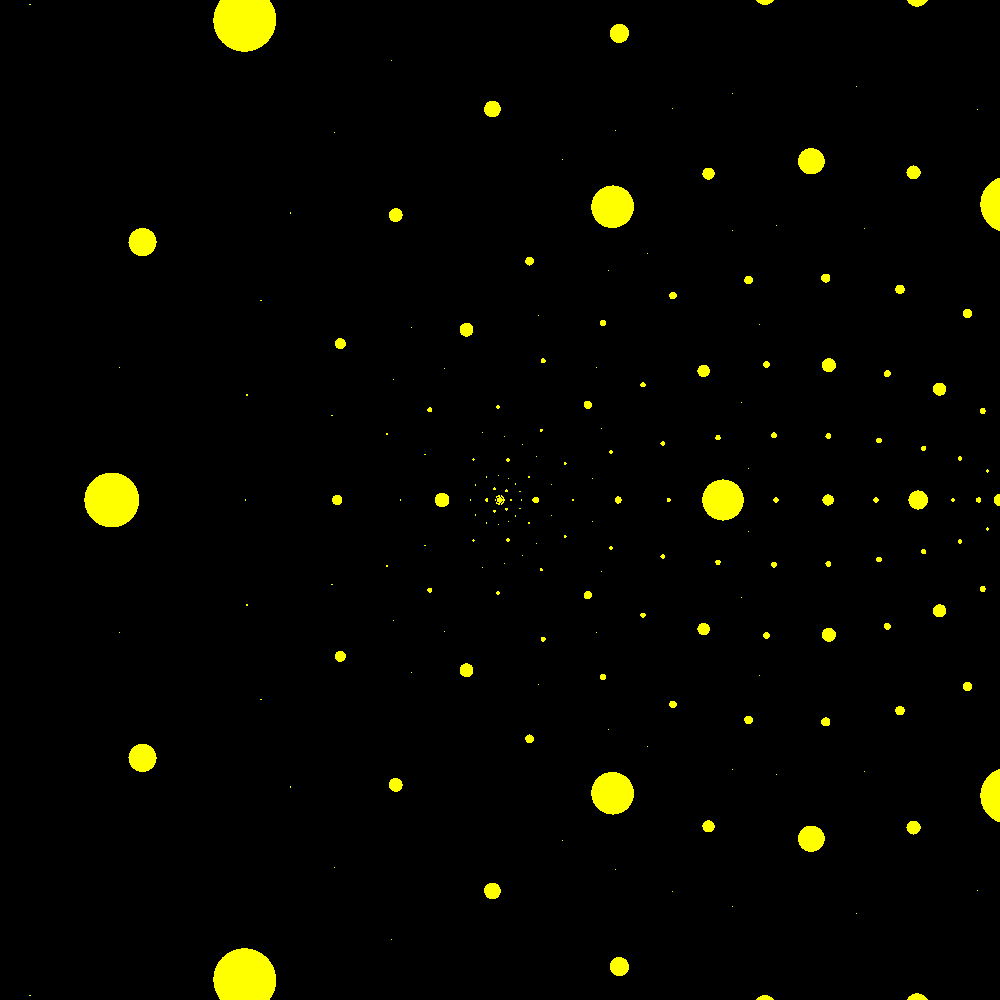} \\
\includegraphics[width=3.0in]{Lattes_torsion_5_8_10000.png} 
\includegraphics[width=3.0in]{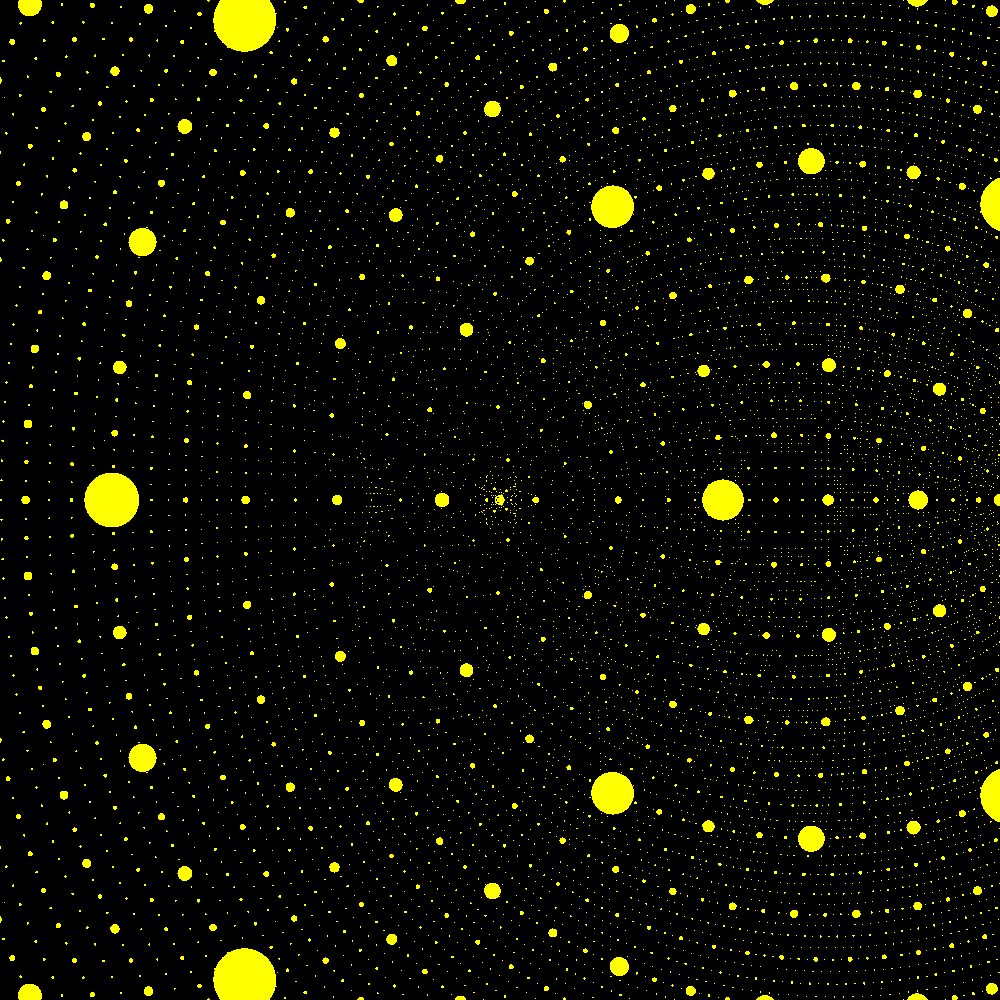} 
\caption{ \small  Illustrating equidistribution:  Torsion parameters of increasing orders for a section of the Legendre family, with $B = \pr^1$ and $E_t = \{y^2 = x(x-1)(x-t)\}$ and $P_5$ as in Figure \ref{Example 2}.  At top, torsion parameters of orders $2^n$ for $n\geq 6$; bottom left, of orders $2^n$ for $n\leq 8$, and bottom right, of orders $2^n$ for $n\leq 10$.    }
\label{Example equidistribution}
\end{figure}

\begin{remark}
As mentioned above, the smaller yellow dots in the illustrations correspond, roughly, to higher orders of torsion.  These images are produced with a standard escape-rate algorithm.  We use the dynamical system $f_t$ on $\pr^1$, induced from multiplication by 2 on the elliptic curve $E_t$ from Section \ref{dynamics}, line \eqref{projection}.  The coordinates on $\pr^1$ are chosen so that $\infty$ is the image of the 0 of $E_t$.  We mark $t$ yellow if $|f^n_t(\pi(P_t))| \geq 10000$ for some $n\leq 8$.  
\end{remark}

\subsection{Density of torsion parameters}
In all of these examples, the yellow dots will fill in the picture as the order of torsion grows, and the support of the measures $\mu_{P,v}$ is equal to $B(\bC)$.  In fact, this will always be the case, for any (nontrivial) section of a complex elliptic surface, as our final result, Proposition \ref{dense}, shows.  

Let $E\to B$ be an elliptic surface over a projective curve $B$, defined over $\bC$, and let $P: B\to E$ a section for which $\hat{h}_E(P) \not=0$ (over the function field $k = \bC(B)$).  Let $\mu_P$ be the measure on $B$ defined as in Proposition \ref{measure description}, as the pullback of the current $T$ that restricts to Haar measure on each smooth fiber.  In other words, $\mu_P$ is locally defined as the Laplacian of the function $G_P(t)$ introduced in Proposition \ref{escape rate subharmonic}, which is well defined when working over $\bC$.

\begin{proposition}  \label{dense}
Let $E\to B$ be an elliptic surface over a projective curve $B$, defined over $\bC$, and let $P: B\to E$ be a section for which $\hat{h}_E(P) \not=0$ (over the function field $k = \bC(B)$).  Then the set 
	$$\{t \in B:  P_t \mbox{ is torsion on } E_t\}$$
is dense in $B(\bC)$ and 
	$$\supp \mu_P = B(\bC).$$
\end{proposition}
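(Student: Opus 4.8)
The plan is to exploit the fact that $\mu_P$ is a probability measure (it has total mass $\hat{h}_E(P)/\deg \mathcal L$ when suitably normalized, in any case positive mass) whose local potentials are the subharmonic functions $G_P(t;v)$ of Proposition \ref{escape rate subharmonic}, and to show that torsion parameters are dense in $\supp\mu_P$, while $\supp\mu_P$ is forced to be all of $B$. First I would reduce the density-of-torsion statement to an equidistribution statement over $\bC$: a point $P_t$ is torsion on $E_t$ precisely when $\pi(P_t)$ is preperiodic for the Lattès map $f_t$ on $\pr^1$ (the elementary observation recalled in Section \ref{dynamics}), i.e. when the parameter $t$ lies in the ``bifurcation-type'' locus where the marked point $\pi(P_t)$ has bounded orbit under the lifted dynamics. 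The standard argument from complex dynamics (Montel / the activity-locus dichotomy, exactly as in \cite{DWY:Lattes} and the McMullen–DeMarco circle of ideas) shows that the set of $t$ for which $\pi(P_t)$ is preperiodic is either empty, or dense in $\supp\mu_P$; since $\hat h_E(P)\neq 0$ forces $\mu_P\neq 0$ and, by the specialization results used elsewhere in the paper, there is at least one torsion parameter, this set is nonempty and hence dense in $\supp\mu_P$.

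The crux is therefore to prove $\supp\mu_P=B(\bC)$. Here I would argue by the identity principle for the subharmonic potential: suppose $\supp\mu_P\neq B(\bC)$, so there is a nonempty open $U\subset B(\bC)$ on which $dd^c G_P = 0$, i.e. $G_P$ is harmonic on $U$, hence locally the real part of a holomorphic function, hence (after shrinking) $G_P = \log|h|$ for a nonvanishing holomorphic $h$ on $U$. Recalling from Proposition \ref{escape rate subharmonic} and Proposition \ref{lambda and G} that $G_P(t) = \tfrac12\,\mathcal G_{F_t}(X_t) + (\text{harmonic})$, this would say that the escape rate $\mathcal G_{F_t}(X_t)$ of the marked orbit is pluriharmonic — equivalently the marked point $\pi(P_t)$ never ``activates'' over $U$, so by Montel the family $\{t\mapsto f_t^n(\pi(P_t))\}$ is normal on $U$. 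Normality on an open subset of $B$ together with the fact that $B$ is a (connected projective) curve, and that $f_t$ genuinely varies — more precisely, that $\hat h_E(P)\neq 0$ forces the marked orbit to escape for the generic/function-field point, which is incompatible with normality on any nonempty open set — yields a contradiction. I would phrase this last incompatibility as: if $\{f_t^n(\pi(P_t))\}$ were normal on $U$, then $G_P\equiv 0$ on $U$, but $G_P$ is (up to the harmonic correction) a global potential whose $dd^c$ has total mass $\hat h_E(P)/\deg\mathcal L>0$, and a nonconstant-curvature metric on a curve cannot have curvature vanishing on an open set while having positive total mass unless — here one uses that $\mathcal G_{F_t}(X_t)$, being the escape rate, is real-analytic where harmonic and its vanishing on an open set propagates — it vanishes identically, forcing $\hat h_E(P)=0$, a contradiction.

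An alternative, perhaps cleaner, route to $\supp\mu_P=B$ which I would pursue in parallel: the measure $\mu_P = P^*T$ where $T$ restricts to Haar measure on smooth fibers (Proposition \ref{measure description}); the complement of $\supp\mu_P$ is the largest open set on which $P^*T$ is zero, and one can show directly that $P$ cannot be ``pluripolar-avoiding'' there by lifting to the universal cover of a punctured neighborhood and using that the multiplier/period of $E_t$ together with the marked section give a non-torsion point over $k=\bC(B)$, so the holonomy obstructs $G_P$ being harmonic on any open set. The main obstacle is making the ``normality on an open set forces $\hat h_E(P)=0$'' implication fully rigorous near the finitely many singular fibers and in the presence of the harmonic correction term $\mathcal G_{F,\mathrm{ord}_{t_0}}(X)\log|u(t)|_v$; I would handle the singular fibers via the removable-singularity and maximum-principle arguments already deployed in the proof of Theorem \ref{subharmonic}, and handle the correction term by working locally away from $\supp D_E(P)$, where $G_P$ and $\tfrac12\mathcal G_{F_t}(X_t)$ differ by a harmonic function and hence have the same support of $dd^c$.
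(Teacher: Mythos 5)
Your overall framing (torsion parameters are dense in $\supp\mu_P$ by the activity/Montel dichotomy, so the whole problem reduces to showing $\supp\mu_P=B(\bC)$) matches the paper's strategy, and the first half is fine. The genuine gap is in the crucial propagation step: you need to pass from ``$\mu_P$ vanishes on some nonempty open $U$'' to ``$\mu_P\equiv 0$ on $B$,'' and neither mechanism you offer accomplishes this. The total-mass argument is simply false as stated: a positive measure on a curve can perfectly well have positive total mass while vanishing on a nonempty open set (e.g.\ $dd^c\log^+|z|$ is the uniform measure on the unit circle, yet $\log^+|z|$ is harmonic on $|z|<1$ and on $|z|>1$); your reasoning would ``prove'' that this measure is zero. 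The real-analyticity/identity-principle argument fails for the same reason: harmonicity of a subharmonic potential on an open set gives you real-analyticity \emph{only on that set} and provides no way to continue the harmonicity across $\partial U$, which is exactly where the mass could sit. Likewise, ``$\hat h_E(P)\neq 0$ forces the marked orbit to escape at the function-field place, which is incompatible with normality on a nonempty open set'' is an assertion of precisely the implication that needs proof.

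The paper closes this gap with a rigidity argument specific to the situation: the Lattès family $\{f_t\}_{t\in B^*}$ is structurally stable over the entire punctured base $B^*$ \cite[Chapter 4]{McMullen:CDR}, so over any simply-connected $U\subset B^*$ there is a \emph{unique} holomorphic motion of $\pr^1$ conjugating the dynamics. If $(f,P)$ is passive on $U$, then $t\mapsto\pi_t(P_t)$ must coincide with the motion of a single point of the fiber; since that motion is defined (by analytic continuation along paths in $B^*$, using uniqueness) over all of $B^*$, passivity propagates from $U$ to all of $B^*$, giving $\mu_P\equiv 0$. Only then does one invoke \cite[Theorem 1.1]{D:stableheight} to contradict $\hat h_E(P)\neq 0$. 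So the missing ingredient in your proposal is the global structural stability of the family together with the uniqueness of holomorphic motions; some input of this kind (or an equivalent arithmetic one) is unavoidable, because the purely potential-theoretic facts you cite do not rule out a proper closed support.
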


We give a complex-dynamical proof, viewing Proposition \ref{dense} as a consequence of the main result of \cite{D:stableheight}.  (We do not use the equidistribution result, Corollary \ref{equidistribution on B}.)  An analytic proof is also presented in \cite[Notes to Chapter 3]{Zannier:book}.

\proof
Let $B^*\subset B$ be a finitely-punctured Riemann surface such that the fiber $E_t$ is smooth for all $t \in B^*$.  Let $\pi_t: E_t\to \pr^1$ be the degree-two projection and $f_t: \pr^1 \to \pr^1$ be the rational map induced by multiplication-by-2 on $E_t$, as defined in the introduction to Section \ref{dynamics}.  It is well known that the holomorphic family $\{f_t:  t\in B^*\}$ is structurally stable; see, e.g., \cite[Chapter 4]{McMullen:CDR}.  Thus, over any simply-connected subset $U$ of $B^*$, there is a holomorphic motion of the periodic points of $f_t$ which extends uniquely to a holomorphic motion of all of $\pr^1$, conjugating the dynamics.  

The key observation is that $\mu_P$ is precisely the ``bifurcation measure" of the pair $(f,P)$ on $B^*$.  See \cite[\S2.7]{D:KAWA} and \cite{D:stableheight} for definitions.  The support of $\mu_P$ is equal to the bifurcation locus of $(f,P)$; in particular, the parameters $t\in B^*$ for which $\pi_t(P_t)$ is preperiodic for $f_t$ are dense in $\supp \mu_P$.  Therefore, it suffices to show that $\supp \mu_P = B$.

Suppose to the contrary that there is an open disk $U \subset B^*$ for which $\mu_P(U) = 0$.  Then the pair $(f,P)$ is stable on $U$, and therefore $\pi_t(P_t)$ cannot be a repelling periodic point for any $t \in U$.  From the uniqueness of the holomorphic motion, it follows that $t\mapsto \pi_t(P_t)$ is part of the holomorphic motion on $U$.  By analytic continuity, then, we deduce that $\pi_t(P_t)$ must follow the motion of a point over all of $B^*$.  This implies that the pair $(f,P)$ is stable throughout $B^*$ and the measure $\mu_P$ is 0.  But this is absurd by the assumption that $\hat{h}_E(P) \not=0$; see \cite[Theorem 1.1]{D:stableheight}.  
\qed

\bigskip \bigskip
\def\cprime{$'$}

\end{document}